\newcommand{\ds}{\displaystyle}
\newcommand{\bs}{\boldsymbol}
\newtheorem{theorem}{Theorem}
\newtheorem*{theorem_preview}{Theorem}
\newtheorem{lemma}[theorem]{Lemma}
\newtheorem{proposition}[theorem]{Proposition}
\newtheorem{corollary}[theorem]{Corollary}
\newtheorem{definition}{Definition}
\theoremstyle{definition}\newtheorem{assumption}{Assumption}
\theoremstyle{remark}\newtheorem*{remark}{Remark}
\newcommand{\R}{\mathbb{R}} 
\newcommand{\Rd}{\R^d} 
\newcommand{\Rplus}{[0,\,+\infty)} 
\newcommand{\Z}{\mathbb{Z}} 
\newcommand{\Zd}{\Z^d} 
\newcommand{\N}{\mathbb{N}} 
\DeclareMathOperator*{\bigcart}{\raisebox{-0.9ex}{\text{\Huge $\times$}}} 
\newcommand{\funct}[3]{#1:#2\to #3} 
\newcommand{\ind}[1]{\mathbbm{1}_{#1}} 
\newcommand{\smoothind}[1]{\chi_{#1}} 
\newcommand{\C}[2]{C(#1;\,#2)} 
\newcommand{\Cc}{C^\infty_c(\Rd)} 
\newcommand{\lip}[1]{\operatorname{Lip}(#1)} 
\newcommand{\Lip}{\textup{Lip}_1(\Rd)} 
\newcommand{\supp}[1]{\operatorname{supp}{#1}} 
\newcommand{\CTPone}{\C{[0,\,T]}{\Pone}} 
\newcommand{\abs}[1]{\left\vert #1\right\vert} 
\newcommand{\infnorm}[1]{\Vert #1\Vert_\infty} 
\newcommand{\lint}{\int\limits} 
\newcommand{\intg}[3]{\lint_{#1}#2\,d#3} 
\newcommand{\intRd}[2]{\intg{\Rd}{#1}{#2}} 
\newcommand{\B}{\mathcal{B}(\Rd)} 
\newcommand{\leb}{\mathcal{L}} 
\newcommand{\PP}{\mathcal{P}} 
\newcommand{\Pone}{\PP_1(\Rd)} 
\newcommand{\Ptwo}{\PP_2(\Rd)} 
\newcommand{\wass}[2]{W_1(#1,\,#2)} 
\renewcommand{\P}{P} 
\newcommand{\Exp}[1]{\mathbb{E}[#1]} 
\newcommand{\condexp}[1]{\hat{#1}} 
\newcommand{\Dt}{\Delta{t}} 
\newcommand{\appr}[1]{\tilde{#1}} 
\newcommand{\vd}{v_\textup{d}} 
\newcommand{\Vd}{V_\textup{d}} 
\newcommand{\neighb}{U} 
\newcommand{\Fmax}{C_F} 
\newcommand{\rot}{\mathcal{R}} 
\newcommand{\rotang}{\vartheta} 
\newcommand{\ivect}{\mathbf{i}} 
\newcommand{\kvect}{\mathbf{k}} 
\newcommand{\ie}{i.e.} 
\newcommand{\eg}{e.g.} 
\newcommand{\wrt}{w.r.t.\ } 
\newcommand{\eq}{Eq.~} 
\newcommand{\ccdot}{\bullet} 
\newcommand{\incond}[1]{\bar{#1}} 
\newcommand{\ball}[2]{B_{#1}(#2)} 
\newcommand{\eps}{\varepsilon} 
\newcommand{\diam}[1]{\operatorname{diam}{#1}}
\newcommand{\xlag}{\bar{x}}
\title{Existence and approximation of probability measure solutions to models of collective behaviors}
\author{Andrea Tosin\thanks{Corresponding author. Tel: (+39) 011.090.7531. {\it E-mail address}: 
        	\texttt{andrea.tosin@polito.it}.},
			Paolo Frasca\\[0.5cm]
			{\small\it\ Department of Mathematics, Politecnico di Torino}\\[-0.1cm]
			{\small\it Corso Duca degli Abruzzi 24, 10129, Torino, Italy}
	    }
\date{}
\begin{document}
\maketitle

\begin{abstract}
In this paper we consider first order differential models of collective behaviors of groups of agents based on the mass conservation equation. Models are formulated taking the spatial distribution of the agents as the main unknown, expressed in terms of a probability measure evolving in time. We develop an existence and approximation theory of the solutions to such models and we show that some recently proposed models of crowd and swarm dynamics fit our theoretic paradigm.
\end{abstract}

\section{Introduction}
This paper deals with mathematical models of collective behaviors of groups of interacting agents, such as human crowds and swarms. The reference framework is that of first order differential models ruled by the principle of conservation of mass and supplemented by a kinematic description of the behavioral strategy developed by the agents. The number of individuals is assumed to be finite, though arbitrarily large, their state being represented by their position evolving in time in the Euclidean space $\Rd$, $d\geq 1$. Rather than looking at the agents singularly, we abstract their spatiotemporal evolution into that of a suitable probability measure, representing the law of their positions understood as random variables. This allows us to provide a unified theory for both discrete and continuous models.

There are three main contributions of this paper. (i) We outline a basic set of modeling assumptions, which allow us to prove the existence of probability measure solutions to a broad class of models of collective behaviors of the kind described above. (ii) Under the very same assumptions, complemented with a suitable condition on the time and space discretization, we provide a convergence result of an \emph{ad hoc} numerical scheme, originally proposed in~\cite{piccoli2010tem}, for approximating the solutions to such models. (iii) We reinterpret the \emph{rendez-vous}, swarm, and crowd dynamics models developed in~\cite{canuto2008eaa,cristiani2010eai,piccoli2009pfb,piccoli2010tem} in the light of our probabilistic description, showing that they comply with the above modeling assumptions and are therefore in the scope of our existence and approximation theory.

In more detail, the paper is organized as follows. After this Introduction, Section~\ref{sec:notations} briefly introduces and explains the main notations and notions used throughout the other sections. Section~\ref{sec:prob.stat} proposes a probabilistic interpretation of the dynamics of systems of interacting agents, discussing both the indefinite mass conservation equation and the related Cauchy problems. Then it presents the modeling assumptions and offers an overview of the results proved in the subsequent sections. Section~\ref{sec:existence} deals with the existence of solutions and Section~\ref{sec:approx} with the convergence of the approximation scheme. These two sections may be skipped by readers not interested in the technical details of the proofs. Section~\ref{sec:models} addresses the above-cited crowd and swarm dynamics models, showing that they fit the theory in all cases interesting for the applications. Finally, Section~\ref{sec:case.study} gives an example of how ODE-based discrete models can be explicitly recovered from our measure-theoretic framework. In addition, it shows by means of numerical tests the convergence of the computational scheme discussed in Section~\ref{sec:approx} to the ODE solutions of such models.

\subsection{Notations and background} \label{sec:notations}
In this section we quickly review the main notations and notions that we will extensively use in the paper.

\paragraph{Functions and function spaces.} We denote by $\C{A}{B}$ the space of continuous functions $\funct{f}{A}{B}$. The set $B$ is usually omitted if it is $\R$. Coherently, we denote by $\Cc$ the space of real-valued infinitely differentiable functions with compact support in $\Rd$.

The indicator function of a set $A$ is $\ind{A}$, namely $\ind{A}(x)=1$ if $x\in A$ and $\ind{A}(x)=0$ if $x\in A^c$.

\paragraph{Norms.} We use $\abs{\cdot}$ for the Euclidean norm in $\Rd$ and $\cdot$ for the corresponding inner product. The open Euclidean ball with center $x\in\Rd$ and radius $R\ge 0$ is denoted by $\ball{R}{x}$. If $\funct{f}{\Rd}{\Rd}$ is bounded, its $\infty$-norm is $\infnorm{f}:=\sup_{x\in\Rd}\abs{f(x)}$.

\paragraph{Measures.} 
Let $\B$ be the Borel $\sigma$-algebra on $\Rd$. If $\mu$ is a measure on the measurable space $(\Rd,\,\B)$ and $\funct{f}{\Rd}{\Rd}$ is Borel, the integral of $f$ \wrt $\mu$ over a measurable set $A$ is denoted by $\int_A f\,d\mu$, or by $\int_A f(x)\,d\mu(x)$ when it is necessary to emphasize the variable of integration. The Lebesgue measure in $\R^d$ is denoted $\leb^d$. However, for integrals with respect to $\leb^d$ we will prefer the usual symbol $dx$ to $d\leb^d(x)$. If $f$ is Borel, we denote by $f\#\mu$ the push forward of $\mu$ through $f$. Specifically, $f\#\mu$ is the measure defined by the relation
\begin{equation}
	\intRd{\eta}{(f\#\mu)}=\intRd{\eta\circ f}{\mu}
	\label{eq:def.pushfwd}
\end{equation}
for all bounded (or $f\#\mu$-integrable) and Borel $\funct{\eta}{\Rd}{\R}$. Taking $\eta=\ind{A}$, $A\in\B$, gives in particular $(f\#\mu)(A)=\mu(f^{-1}(A))$.

\paragraph{Probability spaces.}
We denote by $\Pone$ the space of probability measures on $(\Rd,\,\B)$ whose first moment is finite, \ie, $\int_{\Rd}|x|\,d\mu(x)<+\infty$. The space $\Ptwo$, that we will also occasionally mention, is defined analogously using the second moment. Given two probability measures $\mu,\,\nu\in\Pone$, their Wasserstein distance is defined to be
\begin{equation*}
	\wass{\mu}{\nu}=\sup_{\varphi\in\Lip}\intRd{\varphi}{(\nu-\mu)},
\end{equation*}
where $\Lip$ is the space of Lipschitz continuous functions $\funct{\varphi}{\Rd}{\R}$ with Lipschitz constant $\lip{\varphi}\leq 1$. It can be shown that $W_1$ is a metric on $\Pone$ and that $(\Pone,\,W_1)$ is complete (see \eg, \cite[Proposition 7.1.5]{ambrosio2008gfm}).

Finally, to deal with curves in $\Pone$ parameterized by time, $[0,\,T]\ni t\mapsto\mu_t\in\Pone$, we introduce the space $\CTPone$, which is complete with the metric
\begin{equation*}
	\operatorname{dist}(\mu_\ccdot,\,\nu_\ccdot):=\sup_{t\in[0,\,T]}\wass{\mu_t}{\nu_t}.
\end{equation*}

\section{Problem statement and main results} \label{sec:prob.stat}
In this section we present our approach to the modeling of systems of interacting agents by means of probability measures, and we give an overview of our results.

\subsection{Probabilistic description of systems of interacting agents}
Crowds and swarms can be thought of, in abstract, as systems of $N$ interacting agents in the physical space $\Rd$. The evolution of such systems in time is described by tracing the agent positions at successive instants.
Assume that the position of the $i$-th agent at time $t$ is a random variable $X^i_t$ from a fixed (\ie, time-independent) abstract probability space $(\Omega,\,\mathcal{F},\,\P)$ to the measurable space $(\Rd,\,\B)$. The probability $\P$ is naturally transported by $X^i_t$ onto the new probability $\mu_t:=X^i_t\#\P$ on $(\Rd,\,\B)$, called the law of $X^i_t$. The fact that $\mu_t$ does not depend on the agent label $i$ means that agents are \emph{indistinguishable}: the probability of finding a certain agent somewhere in $\Rd$ at time $t$ is the same for all agents, namely, given $A\in\B$, $\P(\{X^i_t\in A\})=\mu_t(A)$ for all $i=1,\,\dots,\,N$.

Let us now fix a measurable set $A\subseteq\Rd$ and count the average number of agents contained in $A$ at time $t$. This amounts to introducing the new random variable $Y_{t,A}:\Omega\to\N$ defined as
\begin{equation*}
	Y_{t,A}=\sum_{i=1}^N\ind{\{X^i_t\in A\}}
\end{equation*}
and taking its expectation, that we can compute as follows:
\begin{equation*}
	\Exp{Y_{t,A}}=\sum_{i=1}^N\P(\{X^i_t\in A\})=N\mu_t(A).
\end{equation*}
Notice that $\Exp{Y_{t,\ccdot}}$, thought of as a map on $\B$, is a finite positive measure, say $m_t$, over the measurable space $(\Rd,\,\B)$, such that $m_t(\Rd)=N$ all $t$. It is straightforward to identify $m_t$ with the \emph{mass} of the system at time $t$. From the above calculation we see that $m_t$ is proportional to the probability measure of the distribution of the agents: $m_t(A)=N\mu_t(A)$, where $m_t(A)$ is the mass of the set $A\in\B$ at time $t$. In addition, $m_t(\Rd)=N$ for all $t$, \ie, the total mass of the system is constant in time.

The latter observation suggests that we can assume the principle of conservation of the mass, stating that the mass of any measurable set $A$ may change in time only because of inflow or outflow of mass from the boundary $\partial A$. In other words, the mass is neither created nor destroyed but only moved across the domain. This is expressed by postulating the \emph{continuity equation} (or \emph{mass conservation equation}) for the evolution of the measure $m_t$:
\begin{equation}
	\frac{\partial m_t}{\partial t}+\nabla\cdot(m_t v_t)=0,
	\label{eq:mass}
\end{equation}
where $v_t(x)$ is the velocity at time $t$ in the point $x\in\Rd$. In systems of interacting agents the velocity is likely to be affected by the distribution of the agents themselves. Due to the proportionality between $m_t$ and $\mu_t$, this implies that $v_t$ may ultimately depend on the probability $\mu_t$. We assume in particular that $v_t$ depends on $t$ only through $\mu_t$ itself, \ie, that the system is autonomous. Finally, we write $v_t=v[\mu_t]$ to emphasize such a structure of the velocity and notice that \eq\eqref{eq:mass} can be converted into an evolution equation for the probability $\mu_t$:
\begin{equation}
	\frac{\partial\mu_t}{\partial t}+\nabla\cdot(\mu_t v[\mu_t])=0
	\label{eq:cont.eq}
\end{equation}
for $x\in\Rd$ and $t\in(0,\,T]$, where $T>0$ is the final time. If $\mu_t$ solves \eq\eqref{eq:cont.eq}, then $m_t$ formally solves \eq\eqref{eq:mass} with $v_t=v[m_t/N]$.

\subsection{Cauchy problems} \label{sec:cauchy}
By supplementing \eq\eqref{eq:cont.eq} with an initial condition $\incond{\mu}$, the following Cauchy problem is obtained:
\begin{equation}
	\begin{cases}
		\dfrac{\partial\mu_t}{\partial t}+\nabla\cdot(\mu_t v[\mu_t])=0 & \text{in\ } \Rd\times (0,\,T] \\
		\mu_0=\incond{\mu} & \text{in\ } \Rd,
	\end{cases}
	\label{eq:cauchy}
\end{equation}
which models the spatiotemporal evolution of the agent distribution starting from the initial configuration described by $\incond{\mu}$. Derivatives in problem \eqref{eq:cauchy} are meant in the sense of distributions, which leads us to consider the following notion of weak solution:

\begin{definition}[Weak solutions] \label{def:weak.sol}
Given $\incond{\mu}\in\Pone$, we say that $\mu_\ccdot\in\CTPone$ is a weak solution to problem \eqref{eq:cauchy} if
\begin{equation}
	\intRd{\phi}{\mu_t}=\intRd{\phi}{\incond{\mu}}+\lint_0^t\intRd{v[\mu_\tau]\cdot\nabla\phi}{\mu_\tau}\,d\tau,
	\quad \forall\,\phi\in\Cc,\ \forall\,t\in[0,\,T].
	\label{eq:weak.sol}
\end{equation}
\end{definition}

As far as the probabilistic interpretation is concerned, we take the initial condition into account by understanding $\mu_t$ in \eqref{eq:cauchy} as the law of the random variable $\condexp{X}^i_t:=\Exp{X^i_t\vert X^i_0}$, the expectation of $X^i_t$ conditioned to its initial value $X^i_0$. In practice, $\mu_t$ is reinterpreted as the distribution of the position of the $i$-th agent subject to the distribution of the corresponding initial position. From the theory of conditional expectation, we known that $\condexp{X}^i_t$ is a function of $X^i_0$, \ie, there exists a Borel mapping $\funct{\gamma_t}{\Rd}{\Rd}$ such that $\condexp{X}^i_t=\gamma_t(X^i_0)$. This implies, in particular,
\begin{equation}
	\mu_t=\gamma_t\#\incond{\mu}.
	\label{eq:rep.formula}
\end{equation}

In order for \eq\eqref{eq:rep.formula} to result in a representation formula for the solutions of problem \eqref{eq:cauchy}, a more precise characterization of the function $\gamma_t$ is needed. Formally, we plug \eq\eqref{eq:rep.formula} into \eq\eqref{eq:weak.sol} and compute
\begin{equation*}
	\intRd{[\phi(\gamma_t(x))-\phi(x)]}{\incond{\mu}(x)}=
		\lint_0^t\intRd{v[\gamma_\tau\#\incond{\mu}](\gamma_\tau(x))\cdot
			\nabla{\phi}(\gamma_\tau(x))}{\incond{\mu}(x)}\,d\tau
\end{equation*}
for an arbitrarily fixed test function $\phi$. Next we notice that the integrand at the right-hand side can be read as the derivative \wrt $\tau$ of the function $\phi(\gamma_\tau(x))$, provided we identify $\frac{\partial}{\partial t}\gamma_t(x)$ with $v[\gamma_t\#\incond{\mu}](\gamma_t(x))$. Under this assumption we get
\begin{align*}
	\intRd{[\phi(\gamma_t(x))-\phi(x)]}{\incond{\mu}(x)} &=
		\lint_0^t\intRd{\frac{\partial}{\partial\tau}\phi(\gamma_\tau(x))}{\incond{\mu}(x)}\,d\tau
\intertext{and further, interchanging the order of integration at the right-hand side,}
	&=\intRd{[\phi(\gamma_t(x))-\phi(\gamma_0(x))]}{\incond{\mu}(x)}.
\end{align*}
With the additional condition $\gamma_0(x)=x$ (\ie, $\gamma_0$ is the identity function in $\Rd$), this shows that $\mu_t$ represented by \eq\eqref{eq:rep.formula} is formally a weak solution to the Cauchy problem \eqref{eq:cauchy}.

To sum up, $\gamma_t$ has been characterized as a function such that
\begin{equation}
	\begin{cases}
		\dfrac{\partial\gamma_t(x)}{\partial t}=v[\gamma_t\#\incond{\mu}](\gamma_t(x)),
			\quad t\in(0,\,T] \\[3mm]
		\gamma_0(x)=x
	\end{cases}
	\label{eq:gammat}
\end{equation}
for every $x\in\Rd$. In transport theory, such a function is called a \emph{flow map}. The physical interpretation is that $\gamma_t(A)$ is the configuration assumed by the set $A$ at time $t>0$ when transported by the velocity field $v$. Alternatively, for $x\in\Rd$ the mapping $t\mapsto\gamma_t(x)$ is the trajectory of system \eqref{eq:gammat} issuing from $x$.

The method of representing solutions to the Cauchy problem \eqref{eq:cauchy} via flow maps is called \emph{method of the characteristics}. To develop our theory we will mostly prefer a different approach, more suited to treat, by common ideas, existence and approximation of solutions to the models we are interested in. The reader interested in the method of the characteristics is referred to \cite{canizo2011wpt} and references therein for further details.

\subsection{Basic assumptions and results overview}
Models based on \eq\eqref{eq:cont.eq} require to specify the velocity, namely its dependence on the probability $\mu_t$ and on the space variable $x$. Rather than considering a specific model, we outline here a small set of assumptions, which can be possibly regarded as modeling guidelines, whence the whole theory will follow.

\begin{assumption}[Properties of $v$] \label{ass:prop-v}
We assume that the velocity field $v=v[\mu](x)$ satisfies the following properties.
\begin{enumerate}
\item \label{item:ass-bound} {\it Uniform boundedness}: there exists $V>0$ such that
\begin{equation*}
	\abs{v[\mu](x)}\leq V, \quad \forall\,x\in\Rd,\ \forall\,\mu\in\Pone.
\end{equation*}

\item \label{item:ass-lip} {\it Lipschitz continuity}: there exists a constant $\lip{v}>0$, independent of both the space variable and the probability, such that
\begin{equation*}
	\abs{v[\nu](y)-v[\mu](x)}\leq\lip{v}(\abs{y-x}+\wass{\mu}{\nu}),
		\quad \forall\,x,\,y\in\Rd,\ \forall\,\mu,\,\nu\in\Pone.
\end{equation*}

\item \label{item:ass-lin} {\it Linearity \wrt the measure for convex combinations}:
\begin{equation*}
	v[\alpha\mu+(1-\alpha)\nu]=\alpha v[\mu]+(1-\alpha)v[\nu],
		\quad \forall\,\mu,\,\nu\in\Pone,\ \forall\,\alpha\in[0,\,1].
\end{equation*}
\end{enumerate}
\end{assumption}

\bigskip

From this basic set of hypotheses we will be able to prove, in Section~\ref{sec:existence}, existence of solutions to the Cauchy problem \eqref{eq:cauchy} in the appropriate weak sense of Definition~\ref{def:weak.sol}. More precisely, to this end we need the further technical assumption that the initial condition have finite first and second order moments:

\begin{assumption}[Initial condition] \label{ass:in.cond}
We assume that $\incond{\mu}\in\Pone\cap\Ptwo$.
\end{assumption}

Then, our main result in Section~\ref{sec:existence} reads:
\begin{theorem_preview}[cf. Theorem~\ref{theo:existence}]
Under Assumptions~\ref{ass:prop-v},~\ref{ass:in.cond} there exists a weak solution to problem \eqref{eq:cauchy}.
\end{theorem_preview}

Notice that Assumption~\ref{ass:in.cond} is readily satisfied if, for instance, $\incond{\mu}$ has compact support. Indeed, in such a case $\supp{\incond{\mu}}$ is bounded, \ie, there exists a ball $\ball{R}{0}$ of sufficiently large radius $R>0$ such that, for every $p\geq 0$,
\begin{equation*}
	\intRd{\abs{x}^p}{\incond{\mu}(x)}=\intg{\supp{\incond{\mu}}}{\abs{x}^p}{\incond{\mu}(x)}\leq
		\intg{\ball{R}{0}}{\abs{x}^p}{\incond{\mu}}\leq R^p.
\end{equation*}

The compactness of the support of $\incond{\mu}$ makes perfectly sense from the modeling point of view, in fact a crowd or a swarm spread on the whole space would sound quite unrealistic. Assumption~\ref{ass:in.cond} is therefore not restrictive for our purposes.

\bigskip

In Section~\ref{sec:approx} we turn our attention to the approximation of solutions to problem \eqref{eq:cauchy}. We introduce a sequence of grids in $\Rd\times[0,\,T]$ with mesh parameters $h_k$ in space and $\Dt_k$ in time. The index $k$ relates to the grid refinement, in such a way that $h_k,\,\Dt_k\to 0$ when $k\to\infty$. Specifically, we consider the numerical scheme proposed in \cite{piccoli2010tem}, which at each time step seeks an approximation of $\mu_t$ via a probability measure absolutely continuous \wrt to Lebesgue and piecewise constant in space. By introducing a linear-in-time interpolation of such approximate solutions and passing to the limit $k\to\infty$, we obtain the following convergence result:

\begin{theorem_preview}[cf. Theorem~\ref{theo:convergence}]
Under Assumptions~\ref{ass:prop-v},~\ref{ass:in.cond}, suppose that $h_k=o(\Dt_k)$ for $k\to\infty$. If the sequence of approximate solutions converges to some $\mu_\ccdot\in\CTPone$ when the grid is refined then $\mu_\ccdot$ is a weak solution to problem \eqref{eq:cauchy}.
\end{theorem_preview}

Note that convergence when the grid is refined is an assumption of this theorem. In this respect, this result resembles the Lax-Wendroff's Theorem for the numerical approximation of hyperbolic conservation laws (see \eg, \cite{leveque1992nmc}). However, we anticipate that if there exists a bounded subset of $\Rd$, that at each time step and for all level of refinement of the grid contains the supports of all the approximate solutions, then the sequence does converge in $\CTPone$ to some limit, which is then a weak solution to problem \eqref{eq:cauchy} (cf. Corollary~\ref{cor:convergence}).

\bigskip

In Section~\ref{sec:models} we apply the above theory to the models of swarm and crowd dynamics presented in \cite{cristiani2010eai,piccoli2009pfb}. As shown in \cite{cristiani2009mso}, these models can be derived from the common framework provided by \eq\eqref{eq:cont.eq}, with a velocity field of the form
\begin{equation*}
	v[\mu_t](x)=\vd(x)+N\intRd{f(\abs{y-x})r(y-x)\smoothind{\neighb_x}(y)}{\mu_t(y)},
\end{equation*}
where the integral expresses the interactions among the agents. Assuming some minimal regularity of the functions $\vd$, $f$, $r$, $\smoothind{\neighb_x}$, we prove that this velocity complies with Assumption~\ref{ass:prop-v} in case of both isotropic and anisotropic interactions, cf. Sections~\ref{sec:isotropic} and~\ref{sec:anisotropic}, respectively. As a consequence, for the above-mentioned models we deduce existence of probability measure solutions, that can be duly approximated via the numerical scheme discussed in Section~\ref{sec:approx}.

\bigskip

Our results are exemplified in Section~\ref{sec:case.study}, where we show that, given a purely atomic initial measure $\incond{\mu}$, \ie,
\begin{equation*}
	\incond{\mu}=\frac{1}{N}\sum_{l=1}^{N}\delta_{x^l_0} \qquad (x_0^l\in\Rd),
\end{equation*}
a solution to problem \eqref{eq:cauchy} can be found by solving a system of ODEs whose unknowns are the trajectories of the agents. In addition, using a numerical solution of these ODEs as a benchmark, we are able to visualize the convergence of the numerical scheme presented in Section~\ref{sec:approx}.

\section{Existence of solutions} \label{sec:existence}
This section is devoted to give a constructive proof of the existence of solutions to the Cauchy problem \eqref{eq:cauchy}. Under Assumptions~\ref{ass:prop-v},~\ref{ass:in.cond}, the solution is constructed as the limit of a suitable sequence of curves in $\Pone$ parameterized by time $t$.

Let $(\Dt_k)_{k\geq 0}$ be a sequence of time steps such that $\Dt_k\to 0$ when $k\to\infty$. We consider the measures $(\mu_n^k)_{n\geq 0}$ generated recursively as
\begin{equation}
	\begin{cases}
		\mu_{n+1}^k=\gamma_n^k\#\mu_n^k, & \quad n=0,\,1,\,\dots,\,N_k-1, \\
		\mu_0^k=\incond{\mu},
	\end{cases}
	\label{eq:pushfwd}
\end{equation}
where $N_k\in\N$ is such that $N_k\Dt_k=T$ and $\gamma_n^k$ is the (one-step) flow map
\begin{equation}
	\gamma_n^k(x)=x+v[\mu_n^k](x)\Dt_k.
	\label{eq:flowmap}
\end{equation}
It can be shown (see \eg, \cite{piccoli2010tem}) that \eqref{eq:pushfwd} is the explicit time discretization of \eqref{eq:cauchy} at the time instants $t_n^k=n\Dt_k$. In particular, the mapping $\gamma_n^k$ results from the explicit time discretization of problem \eqref{eq:gammat}. Since $\incond{\mu}$ is a probability measure, by induction it is immediate to check that so are all of the $\mu_n^k$'s.

By linear interpolation in time, we define the following curves:
\begin{equation*}
	M_t^k=\sum_{n=0}^{N_k-1}\left[\left(1-\frac{t-t_n^k}{\Dt_k}\right)\mu_n^k+
		\frac{t-t_n^k}{\Dt_k}\mu_{n+1}^k\right]\ind{[t_n^k,\,t_{n+1}^k]}(t).
\end{equation*}
Obviously, $M_t^k$ is a probability measure for each $t\in[0,\,T]$ and each $k\geq 0$.

We will use the curves $M_\ccdot^k$ to construct, in the limit $k\to\infty$, a weak solution to problem \eqref{eq:cauchy}. The proof is divided in two parts, each of which proceeds through a series of technical intermediate steps developed in the next two sections. First, we show that the measures $M_\ccdot^k$ converge to a limit; later, we show that such limit satisfies \eq\eqref{eq:weak.sol}.

\subsection{Convergence of the measures $\bs{M_t^k}$}
In this section we prove that, when $k\to\infty$ and up to subsequences, $M_\ccdot^k$ converges to a limit $\mu_\ccdot$ in $\CTPone$.

We begin by establishing the necessary regularity properties of the iterates $\mu_n^k$ and the curves $M_\ccdot^k$.
\begin{lemma} \label{lemma:mu_n_k}
We have $\mu_n^k\in\Pone\cap\Ptwo$ for all $n=1,\,\dots,\,N_k$ and all $k\geq 0$. In addition:
\begin{equation*}
	\sup_{k\geq 0}{\sup_{1\leq n\leq N_k}\intRd{\abs{x}^p}{\mu_n^k(x)}}<+\infty, \qquad p=1,\,2,
\end{equation*}
\ie, first and second moments of the $\mu_n^k$'s are uniformly bounded.
\end{lemma}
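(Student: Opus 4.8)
The plan is to prove both claims by induction on $n$, propagating moment bounds through the one-step push-forward \eqref{eq:pushfwd}. The key structural fact is that the flow map \eqref{eq:flowmap} is a bounded perturbation of the identity: since $\gamma_n^k(x)=x+v[\mu_n^k](x)\Dt_k$ and Assumption~\ref{ass:prop-v}\eqref{item:ass-bound} gives $\abs{v[\mu_n^k](x)}\leq V$, we have $\abs{\gamma_n^k(x)}\leq\abs{x}+V\Dt_k$ pointwise. By the push-forward identity \eqref{eq:def.pushfwd},
\begin{equation*}
	\intRd{\abs{x}^p}{\mu_{n+1}^k(x)}=\intRd{\abs{\gamma_n^k(x)}^p}{\mu_n^k(x)},
\end{equation*}
so the moments of $\mu_{n+1}^k$ are controlled by integrating $\abs{\gamma_n^k(x)}^p\leq(\abs{x}+V\Dt_k)^p$ against $\mu_n^k$.

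First I would handle finiteness ($\mu_n^k\in\Pone\cap\Ptwo$). The base case $\mu_0^k=\incond{\mu}$ holds by Assumption~\ref{ass:in.cond}. For the inductive step, expanding $(\abs{x}+V\Dt_k)^2=\abs{x}^2+2V\Dt_k\abs{x}+V^2\Dt_k^2$ shows that $\int\abs{x}^2\,d\mu_{n+1}^k$ is a sum of the finite first and second moments of $\mu_n^k$ plus a finite constant; the $p=1$ case is analogous and simpler. This establishes the first sentence of the lemma.

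For the uniform bound I would make the per-step increments explicit. For $p=1$, the inequality $\abs{\gamma_n^k(x)}\leq\abs{x}+V\Dt_k$ yields $\int\abs{x}\,d\mu_{n+1}^k\leq\int\abs{x}\,d\mu_n^k+V\Dt_k$, and iterating from $n=0$ gives $\int\abs{x}\,d\mu_n^k\leq\int\abs{x}\,d\incond{\mu}+nV\Dt_k\leq\int\abs{x}\,d\incond{\mu}+VT$, using $n\Dt_k\leq N_k\Dt_k=T$. This bound is manifestly independent of both $n$ and $k$. For $p=2$, I would expand the square and bound the cross term $2V\Dt_k\int\abs{x}\,d\mu_n^k$ using the already-established uniform first-moment bound, obtaining a recursion of the form $a_{n+1}\leq a_n+C\Dt_k$ for a constant $C$ independent of $k$; iterating again gives $a_n\leq a_0+CT$, uniform in $n,k$. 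Taking suprema over $n$ and $k$ delivers the claim.

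The only mild obstacle is keeping the $p=2$ recursion genuinely uniform: the cross term must be dominated by a constant that does not grow with $k$, which is exactly why the first-moment bound has to be secured first and why $n\Dt_k\leq T$ (rather than some $k$-dependent bound) is the crucial cancellation. Everything else is a routine application of the pointwise growth estimate for $\gamma_n^k$ together with the telescoping of a linear-in-$n$ recursion.
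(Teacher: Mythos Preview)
Your proposal is correct and follows essentially the same route as the paper: push-forward identity, pointwise bound $\abs{\gamma_n^k(x)}\leq\abs{x}+V\Dt_k$, telescoping for $p=1$ to get $\int\abs{x}\,d\mu_n^k\leq\int\abs{x}\,d\incond{\mu}+VT$, then expanding the square for $p=2$ and feeding in the uniform first-moment bound before telescoping again. The paper expands $\abs{x+v\Dt_k}^2$ via the inner product rather than squaring $\abs{x}+V\Dt_k$, but this yields the identical inequality and the argument is otherwise the same.
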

\begin{proof}
\begin{enumerate}
\item \label{firstmom} Let us begin by considering the case $p=1$. 
Note that \begin{align*}
	\intRd{\abs{x}}{\mu_{n+1}^k(x)}&=\intRd{\abs{\gamma_n^k(x)}}{\mu_n^k(x)} \\
	&\leq \intRd{\abs{x}}{\mu_n^k(x)}+\Dt_k\intRd{\abs{v[\mu_n^k](x)}}{\mu_n^k(x)} \\
	&\leq \intRd{\abs{x}}{\mu_n^k(x)}+V\Dt_k,
\end{align*}
then
\begin{equation*}
	\intRd{\abs{x}}{\mu_{n+1}^k(x)}-\intRd{\abs{x}}{\mu_n^k(x)}\leq V\Dt_k,
\end{equation*}
and summing telescopically over $n$ we get
\begin{equation}
	\intRd{\abs{x}}{\mu_n^k(x)}\leq\intRd{\abs{x}}{\incond{\mu}(x)}+VT.
	\label{bound_firstmom}
\end{equation}
Since $\bar\mu\in\Pone$, this implies $\mu_n^k\in\Pone$ for all $n=1,\,\dots,\,N_k$ and all $k\geq 0$. But the right-hand side of \eqref{bound_firstmom} is independent of both $n$ and $k$, hence this also provides the uniform bound for $p=1$.

\item We argue analogously for the case $p=2$. 
We note that 
\begin{equation*}
	\intRd{\abs{x}^2}{\mu_{n+1}^k(x)}=\intRd{\abs{\gamma_n^k(x)}^2}{\mu_n^k(x)}=
		\intRd{\abs{x+v[\mu_n^k](x)\Dt_k}^2}{\mu_n^k(x)}
\end{equation*}
and that
\begin{align*}
	\abs{x+v[\mu_n^k](x)\Dt_k}^2 &= \abs{x}^2+2\Dt_k x\cdot v[\mu_n^k](x)+\Dt_k^2\abs{v[\mu_n^k](x)}^2 \\
	&\leq \abs{x}^2+2V\Dt_k\abs{x}+V^2\Dt_k^2,
\end{align*}
therefore
\begin{equation*}
	\intRd{\abs{x}^2}{\mu_{n+1}^k(x)}\leq \intRd{\abs{x}^2}{\mu_n^k(x)}+2V\Dt_k\intRd{\abs{x}}{\mu_n^k(x)}
		+V^2\Dt_k^2.
\end{equation*}

Moreover, using the bound \eqref{bound_firstmom} we deduce
\begin{equation*}
	\intRd{\abs{x}^2}{\mu_{n+1}^k(x)}-\intRd{\abs{x}^2}{\mu_n^k(x)}\leq 2V\Dt_k\intRd{\abs{x}}{\incond{\mu}(x)}
		+2V^2T\Dt_k+V^2\Dt_k^2
\end{equation*}
and, summing telescopically over $n$,
\begin{equation*}
	\intRd{\abs{x}^2}{\mu_n^k(x)}\leq\intRd{\abs{x}^2}{\incond{\mu}(x)}+2VT\intRd{\abs{x}}{\incond{\mu}(x)}
		+3V^2T^2
\end{equation*}
whence the claims of the lemma follow also for $p=2$. \qedhere
\end{enumerate}
\end{proof}

\begin{lemma} \label{lemma:Mtk.in.CTPone}
For all $k\geq 0$ we have 
\begin{equation}
	\wass{M_s^k}{M_t^k}\leq V\abs{t-s}, \quad \forall\,s,\,t\in[0,\,T],
	\label{eq:unif.lip}
\end{equation}
\ie, the curves $M_\ccdot^k\in\CTPone$ are Lipschitz continuous uniformly in $k$.
Moreover,
\begin{equation*}
	\sup_{k\geq 0}\sup_{t\in[0,\,T]}\intRd{\abs{x}^p}{M_t^k(x)}<+\infty, \qquad p=1,\,2.
\end{equation*}
\end{lemma}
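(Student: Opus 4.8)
The plan is to reduce everything to a single basic estimate on one step of the scheme, namely $\wass{\mu_n^k}{\mu_{n+1}^k}\leq V\Dt_k$, and then to propagate it to the interpolant $M_\ccdot^k$ by convexity and the triangle inequality. To obtain the one-step bound I would combine the dual definition of the Wasserstein distance with the push-forward relation $\mu_{n+1}^k=\gamma_n^k\#\mu_n^k$: for any $\varphi\in\Lip$,
\begin{equation*}
	\intRd{\varphi}{(\mu_{n+1}^k-\mu_n^k)}=\intRd{[\varphi(x+v[\mu_n^k](x)\Dt_k)-\varphi(x)]}{\mu_n^k(x)}.
\end{equation*}
Since $\varphi$ is $1$-Lipschitz, the integrand is bounded pointwise by $\Dt_k\abs{v[\mu_n^k](x)}\leq V\Dt_k$ thanks to the uniform bound $\abs{v}\leq V$ of Assumption~\ref{ass:prop-v}; taking the supremum over $\varphi$ then yields $\wass{\mu_n^k}{\mu_{n+1}^k}\leq V\Dt_k$.

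Next I would pass from the one-step bound to the Lipschitz estimate. If $s,\,t$ lie in the same subinterval $[t_n^k,\,t_{n+1}^k]$, then both $M_s^k$ and $M_t^k$ are convex combinations of the \emph{same} two measures $\mu_n^k$ and $\mu_{n+1}^k$, so their difference is a scalar multiple of $\mu_{n+1}^k-\mu_n^k$:
\begin{equation*}
	M_t^k-M_s^k=\frac{t-s}{\Dt_k}(\mu_{n+1}^k-\mu_n^k).
\end{equation*}
Because $\Lip$ is symmetric (if $\varphi\in\Lip$ then $-\varphi\in\Lip$), the dual formula is positively homogeneous under this scaling, whence $\wass{M_s^k}{M_t^k}=\frac{\abs{t-s}}{\Dt_k}\wass{\mu_n^k}{\mu_{n+1}^k}\leq V\abs{t-s}$. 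For general $s<t$ falling into different subintervals I would insert the intermediate grid nodes $t_{m+1}^k,\,\dots,\,t_n^k$ and apply the triangle inequality for $W_1$; the within-interval bounds then telescope exactly to $V(t-s)$, giving \eqref{eq:unif.lip} for all $s,\,t\in[0,\,T]$ with a constant independent of $k$.

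Finally, the moment bounds follow with essentially no extra work from Lemma~\ref{lemma:mu_n_k}. Since on each subinterval $M_t^k$ is a convex combination of $\mu_n^k$ and $\mu_{n+1}^k$, the quantity $\intRd{\abs{x}^p}{M_t^k(x)}$ is the corresponding convex combination of $\intRd{\abs{x}^p}{\mu_n^k(x)}$ and $\intRd{\abs{x}^p}{\mu_{n+1}^k(x)}$, hence bounded by their maximum; taking suprema over $t$ and $k$ and invoking the uniform bounds of Lemma~\ref{lemma:mu_n_k} for $p=1,\,2$ gives the claim. I expect the only delicate point to be the within-interval identity: it hinges on the positive homogeneity of the dual representation of $W_1$ under scaling the difference of two fixed endpoint measures, which is precisely what lets the $\Dt_k$ in the denominator cancel the $\Dt_k$ from the one-step estimate. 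Everything else is routine.
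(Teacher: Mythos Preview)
Your proposal is correct and follows essentially the same route as the paper: the key one-step estimate $\wass{\mu_n^k}{\mu_{n+1}^k}\leq V\Dt_k$ via the dual formula and the push-forward, the telescoping through intermediate grid nodes using the triangle inequality, and the moment bounds by convexity combined with Lemma~\ref{lemma:mu_n_k}. The only cosmetic difference is that you isolate the within-interval case first and then telescope, whereas the paper writes the triangle decomposition directly (computing $\wass{M_s^k}{\mu_{m+1}^k}=\tfrac{t_{m+1}^k-s}{\Dt_k}\wass{\mu_m^k}{\mu_{m+1}^k}$, which is exactly your homogeneity observation specialized to one endpoint on the grid).
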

\begin{proof}
\begin{enumerate}
\item \label{MtPone} We claim $M_t^k\in\Pone$ for all $t,\,k$. To show this, we fix $k\geq 0$ and $t\in[0,\,T]$ and observe that there exists $0\leq n\leq N_k$ such that $t\in[t_n^k,\,t_{n+1}^k]$. Hence, using \eq\eqref{bound_firstmom}, we obtain
\begin{equation*}
	\intRd{\abs{x}}{M_t^k(x)}=\left(1-\frac{t-t_n^k}{\Dt_k}\right)\intRd{\abs{x}}{\mu_n^k(x)}+
		\frac{t-t_n^k}{\Dt_k}\intRd{\abs{x}}{\mu_{n+1}^k(x)}\leq
			\intRd{\abs{x}}{\incond{\mu}(x)}+VT.
\end{equation*}
From the arbitrariness of $t\in[0,\,T]$, $k\geq 0$ our claim follows, along with the uniform boundedness of the first moment of $M_t^k$ in both $t$ and $k$.

\item We prove now the estimate \eqref{eq:unif.lip}. Let $s,\,t\in[0,\,T]$ and assume, without loss of generality, that $s\leq t$. Then there exist two integers $m$ and $n$, such that $0\leq m\leq n\leq N_k$, with the property that $t_m^k\leq s\leq t_{m+1}^k$ and $t_n^k\leq t\leq t_{n+1}^k$. Therefore we can write
\begin{equation*}
	M_s^k=\left(1-\frac{s-t_m^k}{\Dt_k}\right)\mu_m^k+\frac{s-t_m^k}{\Dt_k}\mu_{m+1}^k, \quad
	M_t^k=\left(1-\frac{t-t_n^k}{\Dt_k}\right)\mu_n^k+\frac{t-t_n^k}{\Dt_k}\mu_{n+1}^k
\end{equation*}
and further, owing to the triangle inequality,
\begin{equation}
	\wass{M_s^k}{M_t^k}\leq\wass{M_s^k}{\mu_{m+1}^k}+\sum_{j=m+1}^{n-1}\wass{\mu_j^k}{\mu_{j+1}^k}+
		\wass{\mu_n^k}{M_t^k}.
	\label{distMsMt}
\end{equation}
Notice that
\begin{align*}
	\wass{M_s^k}{\mu_{m+1}^k} &= \sup_{\varphi\in\Lip}\intRd{\varphi}{(\mu_{m+1}^k-M_s^k)} \\
	&=\left(1-\frac{s-t_m^k}{\Dt_k}\right)\sup_{\varphi\in\Lip}\intRd{\varphi}{(\mu_{m+1}^k-\mu_m^k)} \\
	&=\frac{t_{m+1}^k-s}{\Dt_k}\wass{\mu_m^k}{\mu_{m+1}^k}
\end{align*}
and analogously $\wass{\mu_n^k}{M_t^k}=(t-t_n^k)\wass{\mu_n^k}{\mu_{n+1}^k}/\Dt_k$, thus, according to \eq\eqref{distMsMt}, estimating $\wass{M_s^k}{M_t^k}$ amounts to estimating $\wass{\mu_i^k}{\mu_{i+1}^k}$ for arbitrary $0\leq i\leq N_k-1$.

For any $\varphi\in\Lip$, using $\mu_{i+1}^k=\gamma_i^k\#\mu_i^k$ yields
\begin{align*}
	\intRd{\varphi(x)}{(\mu_{i+1}^k-\mu_i^k)(x)} &= \intRd{(\varphi(\gamma_i^k(x))-\varphi(x))}{\mu_i^k(x)} \\
	&\leq\intRd{\abs{\gamma_i^k(x)-x}}{\mu_i^k(x)}=\Dt_k\intRd{\abs{v[\mu_i^k](x)}}{\mu_i(x)}\leq V\Dt_k,
\end{align*}
consequently $\wass{\mu_i^k}{\mu_{i+1}^k}\leq V\Dt_k$ all $i$. From \eq\eqref{distMsMt} we deduce
\begin{align*}
	\wass{M_s^k}{M_t^k} &\leq V[(t_{m+1}^k-s)+(n-m-1)\Dt_k+(t-t_n^k)] \\
	&=V(t-s+t_{m+1}^k-t_m^k-\Dt_k)=V(t-s),
\end{align*}
which proves our claim.

\item Finally, we claim $M_t^k\in\Ptwo$ for all $t,\,k$. Arguing like in \eqref{MtPone} we have
\begin{align*}
	\intRd{\abs{x}^2}{M_t^k(x)} &= \left(1-\frac{t-t_n^k}{\Dt_k}\right)\intRd{\abs{x}^2}{\mu_n^k(x)}+
		\frac{t-t_n^k}{\Dt_k}\intRd{\abs{x}^2}{\mu_{n+1}^k(x)} \\
	&\leq \intRd{\abs{x}^2}{\incond{\mu}(x)}+2VT\intRd{\abs{x}}{\incond{\mu}(x)}+3V^2T^2,
\end{align*}
where Lemma~\ref{lemma:mu_n_k} has been used. This proves the claim and also the uniform boundedness of the second moment of $M_t^k$ in both $t$ and $k$. \qedhere
\end{enumerate}
\end{proof}

We are now ready to prove the main convergence result of this section.

\begin{proposition} \label{prop:convergence}
There exists $\mu_\ccdot\in\CTPone$ and there exists a subsequence $(M_\ccdot^{k_j})_{j\geq 0}$ such that
\begin{equation*}
	\lim_{j\to\infty}\sup_{t\in[0,\,T]}\wass{M_t^{k_j}}{\mu_t}=0.
\end{equation*}
\end{proposition}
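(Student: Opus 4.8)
The plan is to realize $\mu_\ccdot$ as a uniform limit of a subsequence of the $M_\ccdot^k$ by an Arzelà–Ascoli argument in $\CTPone$ endowed with the metric $\operatorname{dist}$. Lemma~\ref{lemma:Mtk.in.CTPone} already provides the two structural ingredients one needs: the curves $M_\ccdot^k$ are \emph{equicontinuous} in time (indeed uniformly $V$-Lipschitz by \eqref{eq:unif.lip}), and their first and second moments are bounded uniformly in both $t$ and $k$. The only missing ingredient is \emph{pointwise relative compactness}, namely that for each fixed $t\in[0,\,T]$ the family $\{M_t^k:k\geq 0\}$ is relatively compact in $(\Pone,\,W_1)$.

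To establish this, I would invoke the standard characterization of relatively compact sets in the Wasserstein space (see, \eg, \cite{ambrosio2008gfm}): a family in $(\Pone,\,W_1)$ is relatively compact precisely when it is tight and its first moments are uniformly integrable, \ie\ $\sup_{k}\intg{\{\abs{x}>R\}}{\abs{x}}{M_t^k}\to 0$ as $R\to\infty$. Both properties follow at once from the uniform second-moment bound of Lemma~\ref{lemma:Mtk.in.CTPone} via Chebyshev-type estimates. Setting $C:=\sup_{k\geq 0}\sup_{t\in[0,\,T]}\intRd{\abs{x}^2}{M_t^k}<+\infty$, tightness follows from $M_t^k(\{\abs{x}>R\})\leq C/R^2$, while uniform integrability of the first moments follows from $\intg{\{\abs{x}>R\}}{\abs{x}}{M_t^k}\leq\frac{1}{R}\intRd{\abs{x}^2}{M_t^k}\leq C/R$; both bounds are uniform in $k$ and $t$ and vanish as $R\to\infty$. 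This is exactly the point where Assumption~\ref{ass:in.cond} (finiteness of the second moment of $\incond{\mu}$), propagated to all the iterates by Lemma~\ref{lemma:mu_n_k}, is essential.

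With equicontinuity and pointwise relative compactness in hand, the Arzelà–Ascoli theorem for maps from the compact interval $[0,\,T]$ into the \emph{complete} metric space $(\Pone,\,W_1)$ yields a subsequence $(M_\ccdot^{k_j})_{j\geq 0}$ converging uniformly in $t$ to a limit curve $\mu_\ccdot$. Concretely, one extracts via a diagonal procedure a subsequence that converges at every time of a countable dense subset of $[0,\,T]$, and then upgrades to uniform convergence using the uniform Lipschitz bound \eqref{eq:unif.lip}; completeness of $(\Pone,\,W_1)$ guarantees that the limits exist in $\Pone$. Passing to the limit in \eqref{eq:unif.lip} shows that $\mu_\ccdot$ is $V$-Lipschitz, hence $\mu_\ccdot\in\CTPone$, which yields $\lim_{j\to\infty}\sup_{t\in[0,\,T]}\wass{M_t^{k_j}}{\mu_t}=0$. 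The main obstacle here is the passage from moment bounds to relative compactness \emph{in the $W_1$ metric} rather than merely in the narrow (weak) topology: tightness and Prokhorov's theorem alone would give only narrow convergence, and it is the extra uniform integrability supplied by the bounded second moments that promotes this to genuine Wasserstein convergence. Once this is secured, the remaining equicontinuity and diagonal-extraction steps are routine.
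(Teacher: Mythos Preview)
Your proposal is correct and follows essentially the same route as the paper: Arzel\`a--Ascoli in $\CTPone$, with equicontinuity from \eqref{eq:unif.lip} and pointwise relative compactness in $(\Pone,\,W_1)$ obtained via tightness plus uniform integrability of first moments, both deduced from the uniform moment bounds of Lemma~\ref{lemma:Mtk.in.CTPone}. The only cosmetic difference is that the paper cites the abstract criteria from \cite{ambrosio2008gfm} (using the first-moment bound for tightness via $\varphi(x)=\abs{x}$), whereas you derive both tightness and uniform integrability directly from the second-moment bound by Chebyshev-type estimates; the substance is identical.
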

\begin{proof}
It suffices to prove that $\{M_\ccdot^k\}_{k\geq 0}$ is a relatively compact subset of $\CTPone$. Owing to Ascoli-Arzel\`a's Theorem, this happens if $\{M_\ccdot^k\}_{k\geq 0}$ is equicontinuous and $\{M_t^k\}_{k\geq 0}$ is relatively compact in $\Pone$ for all $t\in[0,\,T]$.

\begin{enumerate}
\item Equicontinuity follows from the estimate \eqref{eq:unif.lip}. Indeed, let $\eps>0$ then for $\delta=\eps/(2V)$, which does not depend on $k$, we have $\abs{t-s}<\delta\,\Rightarrow\,\wass{M_s^k}{M_t^k}\leq\eps/2<\eps$.

\item According to \cite[Proposition 7.1.5]{ambrosio2008gfm}, the relative compactness of $\{M_t^k\}_{k\geq 0}$ in $\Pone$ is equivalent to the fact that $\{M_t^k\}_{k\geq 0}$ be tight and have uniformly integrable first moments.
\begin{enumerate}
\item Using \cite[Remark 5.1.5]{ambrosio2008gfm}, a sufficient condition for tightness is that there exists a function $\funct{\varphi}{\Rd}{[0,\,+\infty]}$, whose sublevel sets $\{x\in\Rd\,:\,\varphi(x)\leq c\}$ are compact in $\Rd$, such that
\begin{equation*}
	\sup_{k\geq 0}\intRd{\varphi}{M_t^k}<+\infty.
\end{equation*}
Taking $\varphi(x)=\abs{x}$ and invoking Lemma~\ref{lemma:Mtk.in.CTPone} we see that this condition is fulfilled, hence $\{M_t^k\}_{k\geq 0}$ is tight.

\item Using \cite[\eq (5.1.20)]{ambrosio2008gfm}, a sufficient condition for the uniform integrability of the first moments of $\{M_t^k\}_{k\geq 0}$ is that there exists $p>1$ such that
\begin{equation*}
	\sup_{k\geq 0}\intRd{\abs{x}^p}{M_t^k(x)}<+\infty.
\end{equation*}
From Lemma~\ref{lemma:Mtk.in.CTPone} we know that this actually holds for $p=2$.
\end{enumerate}
\end{enumerate}
Since $\{M_\ccdot^k\}_{k\geq 0}$ is relatively compact in $\CTPone$, up to subsequences we obtain that the sequence $(M_\ccdot^k)_{k\geq 0}$ converges in $\CTPone$ and we are done.
\end{proof}

\subsection{The limit $\bs{\mu_\ccdot}$ solves problem \eqref{eq:cauchy}} \label{sec:limit.mu}

In the previous section we have constructed a map $\mu_\ccdot$ as the limit of the sequence $(M_\ccdot^k)_{k\geq 0}$. In this section we prove that such a $\mu_\ccdot$ is a weak solution to problem \eqref{eq:cauchy}. To this end, we first derive an equation solved by the $M_t^k$'s, then we pass to the limit $k\to\infty$ thanks to Proposition~\ref{prop:convergence}.

In the following, $\phi\in\Cc$ is a fixed test function. Using the definition of $M^k_t$, let us compute
\begin{align}
	\nonumber\intRd{\phi}{M_t^k} &= \sum_{n=0}^{N_k-1}\left\{\left(1-\frac{t-t_n^k}{\Dt_k}\right)
		\intRd{\phi}{\mu_n^k}+\frac{t-t_n^k}{\Dt_k}\intRd{\phi}{\mu_{n+1}^k}\right\}\ind{[t_n^k,\,t_{n+1}^k]}(t) \\
	&=\sum_{n=0}^{N_k-1}\left\{\intRd{\phi}{\mu_n^k}+\frac{t-t_n^k}{\Dt_k}
		\intRd{(\phi\circ\gamma_n^k-\phi)}{\mu_n^k}\right\}\ind{[t_n^k,\,t_{n+1}^k]}(t).
		\label{eq:phi-against}
\end{align}
A Taylor expansion of $\phi\circ\gamma_n^k$ with Lagrange's reminder gives
\begin{align*}
	\phi(\gamma_n^k(x)) &= \phi(x+v[\mu_n^k](x)\Dt_k) \\
	&=\phi(x)+\nabla\phi(x)\cdot v[\mu_n^k](x)\Dt_k+
		\frac{1}{2}(D^2\phi(\xlag)v[\mu_n^k](x))\cdot v[\mu_n^k](x)\Dt_k^2,
\end{align*}
where $D^2\phi$ is the Hessian of $\phi$ and $\xlag$ is a point of the segment connecting $x$ and $x+v[\mu_n^k](x)\Dt_k$. Hence the previous computation specializes as
\begin{align*}
	\intRd{\phi}{M_t^k} &= \sum_{n=0}^{N_k-1}\left\{\intRd{\phi}{\mu_n^k}+(t-t_n^k)
		\intRd{v[\mu_n^k]\cdot\nabla\phi}{\mu_n^k}\right. \notag \\
	&\phantom{=} \left.+\frac{1}{2}\Dt_k(t-t_n^k)
			\intRd{(D^2\phi(\xlag)v[\mu_n^k])\cdot v[\mu_n^k]}{\mu_n^k}\right\}\ind{[t_n^k,\,t_{n+1}^k]}(t).
\end{align*}

We claim now that the mapping $t\mapsto \int_{\Rd}\phi\,dM_t^k$ is Lipschitz continuous, hence a.e. differentiable by Rademacher's Theorem. To see this, observe that $x\mapsto\phi(x)/\lip{\phi}$ is Lipschitz continuous with Lipschitz constant at most $1$, so that
\begin{align*}
	\abs{\intRd{\phi}{M_t^k}-\intRd{\phi}{M_s^k}} &=
		\lip{\phi}\abs{\intRd{\frac{\phi}{\lip{\phi}}}{(M_t^k-M_s^k)}} \\
	&\leq\lip{\phi}\wass{M_s^k}{M_t^k}\leq\lip{\phi}V\abs{t-s}.
\end{align*}
Thus, using \eq\eqref{eq:phi-against}, we compute the derivative
\begin{equation}
	\frac{d}{dt}\intRd{\phi}{M_t^k}=\sum_{n=0}^{N_k-1}\left\{\intRd{v[\mu_n^k]\cdot\nabla\phi}{\mu_n^k}
		+\frac{1}{2}\Dt_k\intRd{(D^2\phi(\xlag)v[\mu_n^k])\cdot v[\mu_n^k]}{\mu_n^k}\right\}
			\ind{[t_n^k,\,t_{n+1}^k]}(t).
	\label{eq:deriv.Mtk}
\end{equation}

Let us consider now:
\begin{align*}
	\intRd{v[M_t^k]\cdot\nabla\phi}{M_t^k} &= \sum_{n=0}^{N_k-1}\left\{\left(1-\frac{t-t_n^k}{\Dt_k}\right)
		\intRd{v[M_t^k]\cdot\nabla\phi}{\mu_n^k}\right. \\
	&\phantom{=}+\left.\frac{t-t_n^k}{\Dt_k}\intRd{v[M_t^k]\cdot\nabla\phi}{\mu_{n+1}^k}
		\right\}\ind{[t_n^k,\,t_{n+1}^k]}(t)
\intertext{and invoke Assumption~\ref{ass:prop-v}-\eqref{item:ass-lin} to get}
	&=\sum_{n=0}^{N_k-1}\intRd{v[\mu_n^k]\cdot\nabla\phi}{\mu_n^k}\ind{[t_n^k,\,t_{n+1}^k]}(t) \\
	&\phantom{=}+\sum_{n=0}^{N_k-1}\left\{\frac{t-t_n^k}{\Dt_k}
		\intRd{v[\mu_n^k]\cdot\nabla\phi}{(\mu_{n+1}^k-\mu_n^k)}\right. \\
	&\phantom{=}-\left(\frac{t-t_n^k}{\Dt_k}\right)^2
		\intRd{(v[\mu_{n+1}^k]-v[\mu_n^k])\cdot\nabla\phi}{\mu_n^k} \\
	&\phantom{=}+\left.\left(\frac{t-t_n^k}{\Dt_k}\right)^2
		\intRd{(v[\mu_{n+1}^k]-v[\mu_n^k])\cdot\nabla\phi}{\mu_{n+1}^k}\right\}\ind{[t_n^k,\,t_{n+1}^k]}(t).
\end{align*}
Using this in \eqref{eq:deriv.Mtk} gives
\begin{align}
	\frac{d}{dt}\intRd{\phi}{M_t^k}-\intRd{v[M_t^k]\cdot\nabla\phi}{M_t^k} &=
		\sum_{n=0}^{N_k-1}\left\{\frac{1}{2}\Dt_k
			\intRd{(D^2\phi(\xlag)v[\mu_n^k])\cdot v[\mu_n^k]}{\mu_n^k}\right. \notag \\
	&\phantom{=}-\frac{t-t_n^k}{\Dt_k}\intRd{v[\mu_n^k]\cdot\nabla\phi}{(\mu_{n+1}^k-\mu_n^k)} \notag \\
	&\phantom{=}+\left(\frac{t-t_n^k}{\Dt_k}\right)^2\intRd{(v[\mu_{n+1}^k]-v[\mu_n^k])\cdot\nabla\phi}{\mu_n^k}
		\notag \\
	&\phantom{=}-\left.\left(\frac{t-t_n^k}{\Dt_k}\right)^2
		\intRd{(v[\mu_{n+1}^k]-v[\mu_n^k])\cdot\nabla\phi}{\mu_{n+1}^k}\right\}\ind{[t_n^k,\,t_{n+1}^k]}(t).
	\label{eq:eq-for-Mtk}		
\end{align}
Formally we can regard this expression as an equation satisfied by $M_t^k$. The remaining of this section is devoted to relate this equation to \eq\eqref{eq:weak.sol}.

\smallskip

Set $H:=\infnorm{D^2\phi}=\lip{\nabla\phi}$ and notice that, owing to Assumption~\ref{ass:prop-v}-\eqref{item:ass-bound}, \eqref{item:ass-lip}, the function $x\mapsto v[\mu_n^k](x)\cdot\nabla\phi(x)$ is Lipschitz continuous with
\begin{equation}
	\lip{v\cdot\nabla\phi}\leq HV+\lip{v}\infnorm{\nabla\phi}=:L.
	\label{eq:lip.v.nablaphi}
\end{equation}
Consequently:
\begin{align*}
	\abs{\frac{d}{dt}\intRd{\phi}{M_t^k}-\intRd{v[M_t^k]\cdot\nabla\phi}{M_t^k}} &\leq
		\sum_{n=0}^{N_k-1}\Biggl\{\frac{1}{2}HV^2\Dt_k
			+\frac{t-t_n^k}{\Dt_k}L\abs{\intRd{\frac{v[\mu_n^k]\cdot\nabla\phi}{L}}{(\mu_{n+1}^k-\mu_n^k)}} \\
	&\phantom{=}+2\left(\frac{t-t_n^k}{\Dt_k}\right)^2\infnorm{\nabla\phi}\wass{\mu_n^k}{\mu_{n+1}^k}
		\Biggr\}\ind{[t_n^k,\,t_{n+1}^k]}(t) \\
	&\leq C\sum_{n=0}^{N_k-1}\Biggl\{\Dt_k+\frac{t-t_n^k}{\Dt_k}\wass{\mu_n^k}{\mu_{n+1}^k} \\
	&\phantom{=}+\left(\frac{t-t_n^k}{\Dt_k}\right)^2\wass{\mu_n^k}{\mu_{n+1}^k}\Biggr\}\ind{[t_n^k,\,t_{n+1}^k]}(t)
\intertext{where $C:=\max\{\frac{1}{2}HV^2,\,L,\,2\infnorm{\nabla\phi}\}$. But $\wass{\mu_n^k}{\mu_{n+1}^k}\leq V\Dt_k$, therefore}
	&\leq C\sum_{n=0}^{N_k-1}\left\{\Dt_k+V(t-t_n^k)+V\frac{(t-t_n^k)^2}{\Dt_k}\right\}\ind{[t_n^k,\,t_{n+1}^k]}(t).
\intertext{Now, for $t\in[t_n^k,\,t_{n+1}^k]$ it results $t-t_n^k\leq\Dt_k$. In addition, $\sum_{n=0}^{N_k-1}\ind{[t_n^k,\,t_{n+1}^k]}(t)=\ind{[0,\,T]}(t)$, hence finally}
	&\leq C(1+2V)\Dt_k\ind{[0,\,T]}(t).
\end{align*}

Integrating \eq\eqref{eq:eq-for-Mtk} between $0$ and $t\leq T$ and using the last inequality yields
\begin{align*}
	\abs{\lint_0^t\left(\frac{d}{d\tau}
		\intRd{\phi}{M_\tau^k}-\intRd{v[M_\tau^k]\cdot\nabla\phi}{M_\tau^k}\right)\,d\tau} &\leq	
	\lint_0^t\abs{\frac{d}{d\tau}\intRd{\phi}{M_\tau^k}-\intRd{v[M_\tau^k]\cdot\nabla\phi}{M_\tau^k}}\,d\tau \\
	&\leq C(1+2V)T\Dt_k
\end{align*}
so that, by further manipulating the left-hand side and taking the limit for $k\to\infty$, we obtain
\begin{equation}
	\lim_{k\to\infty}\abs{\intRd{\phi}{M_t^k}-\intRd{\phi}{\incond{\mu}}-
		\lint_0^t\intRd{v[M_\tau^k]\cdot\nabla\phi}{M_\tau^k}\,d\tau}=0.
	\label{eq:Mtk.limit}
\end{equation}

To infer from \eqref{eq:Mtk.limit} that $\mu_t$ solves \eqref{eq:weak.sol}, we need the following convergence result.
\begin{lemma} \label{lemma:conv.int}
When $k\to\infty$ we have, up to subsequences,
\begin{equation*}
	\intRd{\phi}{M_t^k}\to\intRd{\phi}{\mu_t} \qquad \text{and} \qquad
	\lint_0^t\intRd{v[M_\tau^k]\cdot\nabla\phi}{M_\tau^k}\,d\tau\to
		\lint_0^t\intRd{v[\mu_\tau]\cdot\nabla\phi}{\mu_\tau}\,d\tau
\end{equation*}
for all $t\in[0,\,T]$.
\end{lemma}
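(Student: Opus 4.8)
The plan is to work along the subsequence $(M_\ccdot^{k_j})_{j\geq 0}$ furnished by Proposition~\ref{prop:convergence}, along which $\sup_{t\in[0,\,T]}\wass{M_t^{k_j}}{\mu_t}\to 0$, and to reduce both statements to this uniform Wasserstein convergence; for brevity I keep writing $k$ in place of $k_j$. The first convergence is immediate: since $\phi\in\Cc$ is Lipschitz with constant $\lip{\phi}\leq\infnorm{\nabla\phi}$, the rescaled function $\phi/\lip{\phi}$ belongs to $\Lip$ (if $\lip{\phi}>0$; otherwise $\phi\equiv 0$ by compact support and there is nothing to prove), so the duality definition of $W_1$ gives
\[
	\abs{\intRd{\phi}{M_t^k}-\intRd{\phi}{\mu_t}}\leq\lip{\phi}\,\wass{M_t^k}{\mu_t},
\]
which tends to $0$ by Proposition~\ref{prop:convergence}, in fact uniformly in $t$.

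For the second convergence I would first establish pointwise convergence in $\tau$ of the integrand and then pass to the time integral by dominated convergence. Fixing $\tau\in[0,\,T]$, the natural move is the splitting
\[
	\intRd{v[M_\tau^k]\cdot\nabla\phi}{M_\tau^k}-\intRd{v[\mu_\tau]\cdot\nabla\phi}{\mu_\tau}=I_k+II_k,
\]
where $I_k:=\intRd{(v[M_\tau^k]-v[\mu_\tau])\cdot\nabla\phi}{M_\tau^k}$ and $II_k:=\intRd{v[\mu_\tau]\cdot\nabla\phi}{(M_\tau^k-\mu_\tau)}$. For $I_k$ I would invoke Assumption~\ref{ass:prop-v}-\eqref{item:ass-lip} with $y=x$, obtaining the pointwise-in-$x$ bound $\abs{v[M_\tau^k](x)-v[\mu_\tau](x)}\leq\lip{v}\,\wass{M_\tau^k}{\mu_\tau}$, whence $\abs{I_k}\leq\infnorm{\nabla\phi}\,\lip{v}\,\wass{M_\tau^k}{\mu_\tau}\to 0$. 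For $II_k$ the integrand $x\mapsto v[\mu_\tau](x)\cdot\nabla\phi(x)$ is a \emph{fixed} (i.e.\ $k$-independent) Lipschitz function, whose Lipschitz constant is $L$ by the very computation in \eq\eqref{eq:lip.v.nablaphi}; hence, again by Wasserstein duality, $\abs{II_k}\leq L\,\wass{M_\tau^k}{\mu_\tau}\to 0$. This gives pointwise convergence of the integrand for each $\tau$. Since $\abs{\intRd{v[M_\tau^k]\cdot\nabla\phi}{M_\tau^k}}\leq V\infnorm{\nabla\phi}$ for all $\tau$ and $k$ by Assumption~\ref{ass:prop-v}-\eqref{item:ass-bound} together with $M_\tau^k\in\Pone$, the integrands are uniformly bounded on $[0,\,t]$, and the dominated convergence theorem yields the convergence of the time integrals.

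The only genuine difficulty is the double $k$-dependence of $v[M_\tau^k]$, once through the argument measure and once through the measure against which it is integrated, and this is exactly what the decomposition into $I_k$ and $II_k$ disentangles: $I_k$ is controlled by the Lipschitz continuity of $v$ in the Wasserstein variable, whereas $II_k$ reduces to testing a single fixed Lipschitz function against $M_\tau^k-\mu_\tau$. Everything else is a uniform a priori bound plus dominated convergence; measurability in $\tau$ of the integrands, needed to apply the latter, follows from the continuity of $\tau\mapsto\wass{M_\tau^k}{\mu_\tau}$ guaranteed by Lemma~\ref{lemma:Mtk.in.CTPone} and Proposition~\ref{prop:convergence}.
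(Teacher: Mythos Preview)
Your proof is correct and follows essentially the same approach as the paper: Wasserstein duality for the first limit, and for the second a two-term splitting controlled respectively by the Lipschitz continuity of $v$ in the measure and by the Lipschitz constant $L$ of $v\cdot\nabla\phi$, combined with the uniform bound $V\infnorm{\nabla\phi}$ and dominated convergence. The only cosmetic difference is that the paper's decomposition swaps the roles of $M_\tau^k$ and $\mu_\tau$ (freezing $v[M_\tau^k]$ rather than $v[\mu_\tau]$ in the second term), which is inconsequential since $L$ does not depend on the measure appearing in $v$.
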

\begin{proof}
\begin{enumerate}
\item For the first limit we write
\begin{align*}
	\abs{\intRd{\phi}{\mu_t}-\intRd{\phi}{M_t^k}} &= \abs{\intRd{\phi}{(\mu_t-M_t^k)}} \\
	&=\lip{\phi}\abs{\intRd{\frac{\phi}{\lip{\phi}}}{(\mu_t-M_t^k)}}\\
	&\leq\lip{\phi}\wass{M_t^k}{\mu_t},
\end{align*}
and, up to passing to a suitable subsequence of $(M_\ccdot^k)_{k\geq 0}$, we conclude by applying Proposition~\ref{prop:convergence}.

\item For the second limit we preliminarily observe that $\vert\int_{\Rd}v[M_\tau^k]\cdot\nabla\phi\,dM_\tau^k\vert\leq V\infnorm{\nabla\phi}$, thus, by dominated convergence,
\begin{equation}
	\lim_{k\to\infty}\lint_0^t\intRd{v[M_\tau^k]\cdot\nabla\phi}{M_\tau^k}\,d\tau=
		\lint_0^t\left(\lim_{k\to\infty}\intRd{v[M_\tau^k]\cdot\nabla\phi}{M_\tau^k}\right)\,d\tau.
	\label{eq:dominated}
\end{equation}
Now
\begin{align*}
	\abs{\intRd{v[\mu_\tau]\cdot\nabla\phi}{\mu_\tau}-\intRd{v[M_\tau^k]\cdot\nabla\phi}{M_\tau^k}} &\leq
		\intRd{\abs{v[\mu_\tau]-v[M_\tau^k]}\cdot\abs{\nabla\phi}}{\mu_\tau} \\
	&\phantom{\leq}+\abs{\intRd{v[M_\tau^k]\cdot\nabla\phi}{(\mu_\tau-M_\tau^k)}} \\
	&\leq(\infnorm{\nabla\phi}\lip{v}+L)\wass{M_\tau^k}{\mu_\tau},
\end{align*}
$L$ being the constant defined in \eq\eqref{eq:lip.v.nablaphi}. Proposition~\ref{prop:convergence} implies then $\int_{\Rd}v[M_\tau^k]\cdot\nabla\phi\,dM_\tau^k\to\int_{\Rd}v[\mu_\tau]\cdot\nabla\phi\,d\mu_\tau$ for all $\tau\in[0,\,T]$, and the thesis follows from \eq\eqref{eq:dominated}. \qedhere
\end{enumerate}
\end{proof}

Combining \eq\eqref{eq:Mtk.limit} and Lemma~\ref{lemma:conv.int}, and thanks to the arbitrariness of $\phi$, we obtain that $\mu_\ccdot$ solves problem \eqref{eq:cauchy} in the sense of Definition~\ref{def:weak.sol}. In conclusion, we have proved:
\begin{theorem}[Existence] \label{theo:existence}
Let Assumptions~\ref{ass:prop-v},~\ref{ass:in.cond} hold. Then there exists a weak solution $\mu_\ccdot\in\CTPone$ to the Cauchy problem \eqref{eq:cauchy}.
\end{theorem}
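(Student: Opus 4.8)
The plan is to give a constructive proof based on an explicit time discretization of the flow-map equation \eqref{eq:gammat}. First I would partition $[0,T]$ into $N_k$ intervals of length $\Dt_k$ with $\Dt_k\to 0$, and define a sequence of probability measures $(\mu_n^k)_n$ recursively through the one-step push-forward $\mu_{n+1}^k=\gamma_n^k\#\mu_n^k$, with $\gamma_n^k(x)=x+v[\mu_n^k](x)\Dt_k$ and $\mu_0^k=\incond{\mu}$, as in \eqref{eq:pushfwd}--\eqref{eq:flowmap}. Interpolating these linearly in time produces a curve $M_\ccdot^k\in\CTPone$ for each $k$, and the solution will be obtained as a limit of these curves along a subsequence $k\to\infty$.

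The first half of the argument is a compactness result: I would show that $\{M_\ccdot^k\}_k$ is relatively compact in $\CTPone$ by Ascoli--Arzel\`a. Equicontinuity comes from the uniform Lipschitz-in-time bound $\wass{M_s^k}{M_t^k}\le V\abs{t-s}$, which in turn follows from the uniform velocity bound (Assumption~\ref{ass:prop-v}-\eqref{item:ass-bound}) applied one step at a time, giving $\wass{\mu_n^k}{\mu_{n+1}^k}\le V\Dt_k$. For pointwise relative compactness in $\Pone$ I would invoke the characterization in \cite{ambrosio2008gfm}: tightness plus uniform integrability of first moments. Both reduce to uniform moment estimates---using Assumption~\ref{ass:in.cond} and the boundedness of $v$, the first and second moments of the $\mu_n^k$, hence of $M_t^k$, are bounded uniformly in $n,k,t$ by telescoping the one-step increments. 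Tightness then follows by choosing $\varphi(x)=\abs{x}$ as the coercive weight, and uniform integrability from the uniform bound on the $p=2$ moment. This yields a subsequence and a limit $\mu_\ccdot\in\CTPone$ with $\sup_t\wass{M_t^{k_j}}{\mu_t}\to 0$.

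The second half is to verify that $\mu_\ccdot$ satisfies the weak formulation \eqref{eq:weak.sol}. Here I would first derive an approximate equation satisfied by $M_t^k$: testing against $\phi\in\Cc$ and Taylor-expanding $\phi\circ\gamma_n^k$ to second order, the mapping $t\mapsto\intRd{\phi}{M_t^k}$ is Lipschitz (hence a.e.\ differentiable), and its derivative equals $\intRd{v[\mu_n^k]\cdot\nabla\phi}{\mu_n^k}$ up to an $O(\Dt_k)$ Hessian remainder. I would then compare this with $\intRd{v[M_t^k]\cdot\nabla\phi}{M_t^k}$, crucially using the convex-combination linearity (Assumption~\ref{ass:prop-v}-\eqref{item:ass-lin}) to rewrite $v[M_t^k]$ in terms of $v[\mu_n^k]$ and $v[\mu_{n+1}^k]$ on each subinterval. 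All the resulting discrepancy terms are controlled by $\Dt_k$ and by $\wass{\mu_n^k}{\mu_{n+1}^k}\le V\Dt_k$, using the Lipschitz bound on $x\mapsto v[\mu]\cdot\nabla\phi$ from Assumption~\ref{ass:prop-v}-\eqref{item:ass-bound},\eqref{item:ass-lip}. Integrating in time and letting $k\to\infty$ shows that $M_\ccdot^k$ satisfies \eqref{eq:weak.sol} up to a vanishing error.

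Finally, I would pass to the limit inside each term. The linear term $\intRd{\phi}{M_t^k}\to\intRd{\phi}{\mu_t}$ is immediate from $W_1$-convergence since $\phi/\lip{\phi}\in\Lip$. The main obstacle---and the one place where all three parts of Assumption~\ref{ass:prop-v} are simultaneously needed---is the nonlinear term $\intRd{v[M_\tau^k]\cdot\nabla\phi}{M_\tau^k}$, whose integrand depends on the measure both through the velocity and through the measure against which it is integrated. I would split the difference into one piece measuring $\abs{v[\mu_\tau]-v[M_\tau^k]}$, controlled by the measure-Lipschitz bound (Assumption~\ref{ass:prop-v}-\eqref{item:ass-lip}), and another piece of the form $\intRd{v[M_\tau^k]\cdot\nabla\phi}{(\mu_\tau-M_\tau^k)}$, controlled by the $W_1$-distance via the Lipschitz constant of $x\mapsto v[M_\tau^k](x)\cdot\nabla\phi(x)$. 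Both are dominated by $\wass{M_\tau^k}{\mu_\tau}$, which tends to $0$ uniformly; a dominated-convergence argument (the integrands being bounded by $V\infnorm{\nabla\phi}$) then moves the limit under the time integral, completing the identification of $\mu_\ccdot$ as a weak solution.
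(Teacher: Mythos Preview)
Your proposal is correct and follows essentially the same route as the paper: the same explicit-Euler push-forward construction and linear-in-time interpolation $M_\ccdot^k$, the same Ascoli--Arzel\`a compactness argument (equicontinuity from $\wass{M_s^k}{M_t^k}\le V\abs{t-s}$ and pointwise relative compactness from the uniform first/second moment bounds), and the same passage to the limit via a Taylor expansion of $\phi\circ\gamma_n^k$, the convex-linearity assumption to unpack $v[M_t^k]$, and the split of the nonlinear term into a $v$-difference piece and a measure-difference piece closed by dominated convergence. The paper organizes these steps as Lemmas~\ref{lemma:mu_n_k}--\ref{lemma:conv.int} and Proposition~\ref{prop:convergence}, but the ideas and their order coincide with yours.
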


\section{Approximation of the solutions} \label{sec:approx}
This section is devoted to a convergence analysis of the numerical scheme proposed in \cite{piccoli2010tem} for the approximation of the solutions to problem \eqref{eq:cauchy}. We begin by sketching the main ideas which underlie the construction of the scheme, referring the reader to the above-cited paper for a detailed derivation. The scheme is obtained from a twofold approximation of problem \eqref{eq:cauchy}, in time and in space.
To this goal, we introduce a discretization of the time interval $[0,\,T]$ by means of discrete instants $t_n^k=n\Dt_k$, where the index $n$ ranges from $0$ to a value $N_k$ such that $N_k\Dt_k=T$ and the time step $\Dt_k>0$ tends to $0$ when $k\to\infty$. By this discretization, we obtain the discrete-time dynamical system \eqref{eq:pushfwd}. Then, we introduce a space discretization in the following way. We define a pairwise disjoint partition of $\Rd$ made of measurable elements $E_i^k\in\B$, $i=(i_1,\,\dots,\,i_d)\in\Zd$, $k\geq 0$, such that
\begin{equation*}
	\bigcup_{i\in\Zd}E_i^k=\Rd, \qquad	E_i^k\cap E_j^k=\emptyset \quad\forall\,i\ne j
\end{equation*}
for all $k\geq 0$.
For the sake of simplicity, we assume that the $E_i^k$'s are hypercubes of edge length $h_k>0$, with $h_k\to 0$ when $k\to\infty$. Specifically,
\begin{equation*}
	E_i^k=\bigcart_{l=1}^{d}\left[i_l-\frac{1}{2} ,\,i_l+\frac{1}{2}\right)h_k.
\end{equation*}
Note that the index $k$ identifies the level of refinement of the numerical grid. 
Using this space discretization, we approximate both the initial condition $\incond{\mu}$ and the measures $\mu_n^k$ by means of piecewise constant measures $\lambda_n^k\ll\leb^d$, $n=0,\,\dots,\,N_k$, $k\geq 0$. More precisely, $d\lambda_n^k=\rho_n^k\,dx$ with
\begin{equation*}
	\rho_n^k(x)=\sum_{i\in\Zd}\rho_i^n\ind{E_i^k}(x).
\end{equation*}
The space discretization makes it necessary to approximate also the flow map $\gamma_n^k$ defined in \eq\eqref{eq:flowmap}. This is accomplished by the mapping
\begin{equation*}
	\appr{\gamma}_n^k(x)=x+\appr{v}_n^k(x)\Dt_k, \qquad
		\appr{v}_n^k(x)=\sum_{i\in\Zd}v[\lambda_n^k](x_i^k)\ind{E_i^k}(x),
\end{equation*}
$x_i^k$ being a point of the grid cell $E_i^k$ (\eg, its center $x_i^k=ih_k$). In practice, the velocity $v[\mu_n^k](x)$ is approximated by the piecewise constant field $\appr{v}_n^k(x)$ taking in $E_i^k$ the value that $v$, computed \wrt the measure $\lambda_n^k$, takes in $x_i^k$. Notice that $\abs{\appr{v}_n^k(x)}\leq V$ for all $x\in\Rd$, all $n=0,\,\dots,\,N_k$, and all $k\geq 0$ because of Assumption~\ref{ass:prop-v}-\eqref{item:ass-bound}.

By imposing $\lambda_{n+1}^k(E_i^k)=(\appr{\gamma}_n^k\#\lambda_n^k)(E_i^k)$ for each $i\in\Zd$, we deduce the following explicit-in-time scheme relating recursively the coefficients $\{\rho_i^n\}$ at two successive time steps:
\begin{equation}
	\rho_i^{n+1}=\frac{1}{h_k^d}\sum_{j\in\Zd}\rho_j^n\leb^d(E_j^k\cap(\appr{\gamma}_n^k)^{-1}(E_i^k)),
	\label{eq:numscheme}
\end{equation}
where $h_k^d$ is $\leb^d(E_i^k)$. To start up the scheme one has to provide the coefficients $\rho_i^0$, that we obtain as
\begin{equation}
	\rho_i^0=\frac{1}{h_k^d}\incond{\mu}(E_i^k), \quad i\in\Zd.
	\label{eq:numscheme.incond}
\end{equation}

\bigskip

We are now ready to start the analysis of the proposed scheme. To this end, we begin with a simple property of the approximation measures.
\begin{lemma} \label{lemma:lambdank-proba}
For every $n=0,\,\dots,\,N_k$, $k\geq 0$, the $\lambda_n^k$'s are probability measures. 
\end{lemma}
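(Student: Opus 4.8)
The plan is to prove that each $\lambda_n^k$ is a probability measure by induction on $n$, following the same structure used earlier for the $\mu_n^k$'s in connection with \eqref{eq:pushfwd}. Since every $\lambda_n^k$ is by construction a nonnegative measure absolutely continuous \wrt $\leb^d$, it suffices to verify that $\lambda_n^k(\Rd)=1$ for all $n$ and all $k$. The key observation is that the recursion \eqref{eq:numscheme} is nothing but the statement $\lambda_{n+1}^k=\appr{\gamma}_n^k\#\lambda_n^k$ tested on the cells $E_i^k$, and that push-forward through any Borel map preserves total mass.

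For the base case $n=0$, I would use the definition \eqref{eq:numscheme.incond}: summing over $i\in\Zd$,
\begin{equation*}
	\lambda_0^k(\Rd)=\sum_{i\in\Zd}\rho_i^0\,h_k^d=\sum_{i\in\Zd}\incond{\mu}(E_i^k)=\incond{\mu}(\Rd)=1,
\end{equation*}
where the last two equalities follow from the fact that $\{E_i^k\}_{i\in\Zd}$ is a pairwise disjoint partition of $\Rd$ and that $\incond{\mu}\in\Pone$ is a probability measure. For the inductive step, assuming $\lambda_n^k(\Rd)=1$, I would sum \eqref{eq:numscheme} over $i\in\Zd$ after multiplying by $h_k^d=\leb^d(E_i^k)$:
\begin{equation*}
	\lambda_{n+1}^k(\Rd)=\sum_{i\in\Zd}\rho_i^{n+1}h_k^d=
		\sum_{i\in\Zd}\sum_{j\in\Zd}\rho_j^n\leb^d\!\left(E_j^k\cap(\appr{\gamma}_n^k)^{-1}(E_i^k)\right).
\end{equation*}
Interchanging the order of summation (all terms are nonnegative, so this is justified) and using that the sets $(\appr{\gamma}_n^k)^{-1}(E_i^k)$ form a partition of $\Rd$ as $i$ ranges over $\Zd$ — because the $E_i^k$ do and preimages preserve disjointness and cover — the inner sum over $i$ collapses to $\leb^d(E_j^k)=h_k^d$, giving $\sum_{j\in\Zd}\rho_j^n h_k^d=\lambda_n^k(\Rd)=1$.

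The only point requiring a little care is the interchange of the two infinite sums and the recombination of the Lebesgue measures of the pieces $E_j^k\cap(\appr{\gamma}_n^k)^{-1}(E_i^k)$; this is where one must be sure that $\{(\appr{\gamma}_n^k)^{-1}(E_i^k)\}_{i\in\Zd}$ genuinely tiles $\Rd$ and that the double series converges. Both facts are immediate here: $\appr{\gamma}_n^k$ is a Borel map, so the preimages of the disjoint covering $\{E_i^k\}$ again form a disjoint covering, and nonnegativity of every $\rho_j^n$ (itself an easy consequence of the induction hypothesis and \eqref{eq:numscheme}) makes Tonelli's theorem applicable. Thus the interchange is licit and no obstacle of substance arises; the statement is really a mass-conservation identity built into the push-forward construction of the scheme.
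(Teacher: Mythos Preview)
Your proof is correct and follows essentially the same approach as the paper: induction on $n$, with the base case handled via $\sigma$-additivity of $\incond{\mu}$ over the partition $\{E_i^k\}$, and the inductive step via interchanging the double sum and using that $\{(\appr{\gamma}_n^k)^{-1}(E_i^k)\}_{i\in\Zd}$ partitions $\Rd$. The paper's proof is organized identically, with the only cosmetic difference that nonnegativity of the $\rho_i^n$'s is stated explicitly at each stage rather than deferred to a closing remark.
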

\begin{proof}
\begin{enumerate}
\item The claim is certainly true for $\lambda_0^k$, since \eq\eqref{eq:numscheme.incond} shows that $\rho_i^0\geq 0$ all $i$ and furthermore
\begin{equation*}
	\lambda_0^k(\Rd)=\intRd{\rho_0^k(x)}{x}=h_k^d\sum_{i\in\Zd}\rho_i^0=\sum_{i\in\Zd}\incond{\mu}(E_i^k)
		=\incond{\mu}(\Rd)=1,
\end{equation*}
where we have used the $\sigma$-additivity of $\incond{\mu}$.
\item If we assume now that $\lambda_n^k$ is a probability measure for a certain $n$, using \eq\eqref{eq:numscheme} we get $\rho_i^{n+1}\geq 0$ all $i$ and moreover
\begin{align*}
	\lambda_{n+1}^k(\Rd) &= h_k^d\sum_{i\in\Zd}\rho_i^{n+1}=
		\sum_{i\in\Zd}\sum_{j\in\Zd}\rho_j^n\leb^d(E_j^k\cap(\appr{\gamma}_n^k)^{-1}(E_i^k)) \\
	&=\sum_{j\in\Zd}\rho_j^n\sum_{i\in\Zd}\leb^d(E_j^k\cap(\appr{\gamma}_n^k)^{-1}(E_i^k))
		=\sum_{j\in\Zd}\rho_j^n\leb^d(E_j^k\cap(\appr{\gamma}_n^k)^{-1}(\Rd)) \\
	&=h_k^d\sum_{j\in\Zd}\rho_j^n=\lambda_n^k(\Rd)=1,
\end{align*}
where we have used the fact that $(\appr{\gamma}_n^k)^{-1}(E_i^k)$ are pairwise disjoint and that set operations, \eg, union, commute with the inverse image of a function. By induction on $n$, and by the arbitrariness of $k\geq 0$, the claim follows. \qedhere
\end{enumerate}
\end{proof}

The next result shows that the link between two successive measures $\lambda_n^k$, $\lambda_{n+1}^k$ is a push forward as defined by \eq\eqref{eq:def.pushfwd}, provided we restrict test functions to \emph{simple functions} adapted to the spatial grid $\{E_i^k\}_{i\in\Zd}$, \ie, piecewise constant functions $\funct{s}{\Rd}{\R}$ of the form
\begin{equation*}
	s(x)=\sum_{i\in\Zd}\alpha_i\ind{E_i^k}(x) \qquad (\alpha_i\in\R).
\end{equation*}

\begin{lemma} \label{lemma:s}
Let $\funct{s}{\Rd}{\R}$ be simple over the grid $\{E_i^k\}_{i\in\Zd}$. Then
\begin{equation*}
	\intRd{s}{\lambda_{n+1}^k}=\intRd{s\circ\appr{\gamma}_n^k}{\lambda_n^k}.
\end{equation*}
\begin{proof}
We have
\begin{align*}
	\intRd{s}{\lambda_{n+1}^k}=h_k^d\sum_{i\in\Zd}\alpha_i\rho_i^{n+1} &=
			\sum_{i\in\Zd}\alpha_i\sum_{j\in\Zd}\rho_j^n\leb^d(E_j^k\cap(\appr{\gamma}_n^k)^{-1}(E_i^k)) \\
	&= \sum_{j\in\Zd}\rho_j^n\sum_{i\in\Zd}\alpha_i\leb^d(E_j^k\cap(\appr{\gamma}_n^k)^{-1}(E_i^k)).
\end{align*}
Notice that $s(\appr{\gamma}_n^k(x))=\sum_{i\in\Zd}\alpha_i\ind{(\appr{\gamma}_n^k)^{-1}(E_i^k)}(x)$, \ie, $s\circ\appr{\gamma}_n^k$ is piecewise constant over the sets $(\appr{\gamma}_n^k)^{-1}(E_i^k)$, which form a pairwise disjoint partition of $\Rd$. Therefore, for any given $A\in\B$,
\begin{equation*}
	\intg{A}{s(\appr{\gamma}_n^k(x))}{x}=\sum_{i\in\Zd}
		\intg{A\cap(\appr{\gamma}_n^k)^{-1}(E_i^k)}{s(\appr{\gamma}_n^k(x))}{x}=
			\sum_{i\in\Zd}\alpha_i\leb^d(A\cap(\appr{\gamma}_n^k)^{-1}(E_i^k)).
\end{equation*}
For $A=E_j^k$ this enables us to continue the previous computation as
\begin{equation*}
	\intRd{s}{\lambda_{n+1}^k}=\sum_{j\in\Zd}\rho_j^n\intg{E_j^k}{s(\appr{\gamma}_n^k(x))}{x}=
		\intRd{s\circ\appr{\gamma}_n^k}{\lambda_n^k}
\end{equation*}
and to obtain the thesis.
\end{proof}
\end{lemma}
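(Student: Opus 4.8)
The plan is to verify the identity by a direct computation, reducing both integrals to the same double sum over grid cells. The two ingredients are the piecewise-constant structure of $\lambda_n^k$, $\lambda_{n+1}^k$ and the update rule \eqref{eq:numscheme}, together with the elementary observation that composing a grid-adapted simple function with $\appr{\gamma}_n^k$ produces a simple function over the preimage partition.

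First I would handle the left-hand side. Since $s\equiv\alpha_i$ and $\rho_{n+1}^k\equiv\rho_i^{n+1}$ on each cell $E_i^k$, which has Lebesgue measure $h_k^d$, the integral collapses to $\intRd{s}{\lambda_{n+1}^k}=h_k^d\sum_{i\in\Zd}\alpha_i\rho_i^{n+1}$. Substituting the recursion \eqref{eq:numscheme} cancels the factor $h_k^d$ and, after interchanging the order of the (nonnegative, hence by Tonelli freely rearrangeable) double sum, yields
\begin{equation*}
	\intRd{s}{\lambda_{n+1}^k}=\sum_{j\in\Zd}\rho_j^n\sum_{i\in\Zd}\alpha_i\leb^d(E_j^k\cap(\appr{\gamma}_n^k)^{-1}(E_i^k)).
\end{equation*}

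Next I would transform the right-hand side into the same expression. The key step is to note that $\appr{\gamma}_n^k(x)\in E_i^k$ precisely when $x\in(\appr{\gamma}_n^k)^{-1}(E_i^k)$, so that $s\circ\appr{\gamma}_n^k=\sum_{i\in\Zd}\alpha_i\ind{(\appr{\gamma}_n^k)^{-1}(E_i^k)}$; because the $E_i^k$ partition $\Rd$, their preimages form a pairwise disjoint partition of $\Rd$ as well, and $s\circ\appr{\gamma}_n^k$ is simple over this new partition. Consequently, for any Borel set $A$ one has $\intg{A}{s\circ\appr{\gamma}_n^k}{x}=\sum_{i\in\Zd}\alpha_i\leb^d(A\cap(\appr{\gamma}_n^k)^{-1}(E_i^k))$. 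Using then the piecewise-constant density $\rho_n^k\equiv\rho_j^n$ on $E_j^k$ to split $\intRd{s\circ\appr{\gamma}_n^k}{\lambda_n^k}=\sum_{j\in\Zd}\rho_j^n\intg{E_j^k}{s\circ\appr{\gamma}_n^k}{x}$ and applying the preceding identity with $A=E_j^k$ reproduces exactly the double sum obtained above, which completes the proof.

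There is no genuine analytic difficulty here; the only thing requiring care is the bookkeeping with inverse images and the interchange of the summation order. Since all summands are nonnegative and the measures $\lambda_n^k$ are finite (indeed probabilities, by Lemma~\ref{lemma:lambdank-proba}), Tonelli's theorem legitimizes the rearrangement, and the remaining manipulations are routine.
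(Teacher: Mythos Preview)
Your proof is correct and follows essentially the same route as the paper's own argument: compute the left-hand side via the recursion \eqref{eq:numscheme}, rewrite $s\circ\appr{\gamma}_n^k$ as a simple function over the preimage partition, and match the two double sums. One minor slip: the summands $\alpha_i\rho_j^n\leb^d(E_j^k\cap(\appr{\gamma}_n^k)^{-1}(E_i^k))$ need not be nonnegative (the $\alpha_i$ are arbitrary reals), so Tonelli is not the right justification; the interchange is nonetheless valid because for each fixed $j$ only finitely many $i$ give a nonzero term (the translated cube $\appr{\gamma}_n^k(E_j^k)$ meets at most $2^d$ grid cells), and the remaining sum over $j$ is absolutely convergent.
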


An immediate consequence of this lemma is the following result, that will be fundamental for the sequel.
\begin{lemma} \label{lemma:approx.pushfwd}
For all Lipschitz continuous $\varphi:\Rd\to\R$ we have
\begin{equation*}
	\abs{\intRd{\varphi}{\lambda_{n+1}^k}-\intRd{\varphi\circ\appr{\gamma}_n^k}{\lambda_n^k}}\leq
		2\lip{\varphi}\sqrt{d}h_k, \quad \forall\,n=0,\,\dots,\,N_k.
\end{equation*}
\end{lemma}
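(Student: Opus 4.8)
The plan is to bridge the two integrals by a simple function adapted to the grid, so that Lemma~\ref{lemma:s} applies exactly to the bridging term while the remaining pieces are controlled by the Lipschitz modulus of $\varphi$ over a single cell. Concretely, I would introduce the simple function
\begin{equation*}
	s(x)=\sum_{i\in\Zd}\varphi(x_i^k)\ind{E_i^k}(x),
\end{equation*}
which agrees with $\varphi$ at the reference point $x_i^k$ of each cell. Since $E_i^k$ is a hypercube of edge length $h_k$, any two of its points lie at distance at most its diameter $\sqrt{d}\,h_k$; hence for $x\in E_i^k$,
\begin{equation*}
	\abs{\varphi(x)-s(x)}=\abs{\varphi(x)-\varphi(x_i^k)}\leq\lip{\varphi}\abs{x-x_i^k}\leq\lip{\varphi}\sqrt{d}\,h_k,
\end{equation*}
and this pointwise bound holds uniformly on $\Rd$.

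Next I would insert $s$ and split the target difference as
\begin{equation*}
	\intRd{\varphi}{\lambda_{n+1}^k}-\intRd{\varphi\circ\appr{\gamma}_n^k}{\lambda_n^k}
	=\intRd{(\varphi-s)}{\lambda_{n+1}^k}
	+\left(\intRd{s}{\lambda_{n+1}^k}-\intRd{s\circ\appr{\gamma}_n^k}{\lambda_n^k}\right)
	+\intRd{(s-\varphi)\circ\appr{\gamma}_n^k}{\lambda_n^k}.
\end{equation*}
The middle parenthesis vanishes by Lemma~\ref{lemma:s}, precisely because $s$ is simple over the grid. For the first term I would bound $\abs{\varphi-s}\leq\lip{\varphi}\sqrt{d}\,h_k$ uniformly and use that $\lambda_{n+1}^k$ is a probability measure (Lemma~\ref{lemma:lambdank-proba}), so it contributes at most $\lip{\varphi}\sqrt{d}\,h_k$. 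The third term is handled identically: for each $x$ the image $\appr{\gamma}_n^k(x)$ falls in some cell, where $\abs{s-\varphi}$ is again at most $\lip{\varphi}\sqrt{d}\,h_k$, and $\lambda_n^k$ has unit mass, so it too contributes at most $\lip{\varphi}\sqrt{d}\,h_k$. Summing the two surviving terms yields the factor $2$ and the claimed estimate.

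The estimate is essentially routine once Lemma~\ref{lemma:s} is invoked, so I expect no substantial obstacle. The only point requiring mild care is the third term: although $\appr{\gamma}_n^k$ displaces mass across cells, the bound $\abs{(s-\varphi)\circ\appr{\gamma}_n^k}\leq\lip{\varphi}\sqrt{d}\,h_k$ survives the composition because the uniform cell-diameter estimate on $\abs{s-\varphi}$ holds at every point of $\Rd$, independently of where points are mapped. One must also correctly identify $\sqrt{d}\,h_k$ as the cell diameter, rather than the edge length $h_k$, which is exactly what produces the $\sqrt{d}$ in the bound.
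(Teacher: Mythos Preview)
Your proposal is correct and follows essentially the same approach as the paper: introduce the simple function $s=\sum_i\varphi(x_i^k)\ind{E_i^k}$, invoke Lemma~\ref{lemma:s} to kill the middle term, and bound the two remaining pieces by $\lip{\varphi}\sqrt{d}\,h_k$ using the cell diameter and the fact that $\lambda_{n+1}^k$ and $\lambda_n^k$ (equivalently $\appr{\gamma}_n^k\#\lambda_n^k$) are probability measures. The only cosmetic difference is that the paper writes the error terms as sums over cells and uses the pushforward $\appr{\gamma}_n^k\#\lambda_n^k$ explicitly, whereas you use the uniform pointwise bound $\abs{\varphi-s}\leq\lip{\varphi}\sqrt{d}\,h_k$ directly; both are equivalent.
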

\begin{proof}
We consider the simple function
\begin{equation*}
	s(x)=\sum_{i\in\Zd}\varphi(x_i^k)\ind{E_i^k}(x)
\end{equation*}
and, using Lemma~\ref{lemma:s}, we compute:
\begin{align*}
	\intRd{\varphi(x)}{\lambda_{n+1}^k(x)} &= \intRd{(\varphi(x)-s(x))}{\lambda_{n+1}^k}+
		\intRd{s(x)}{\lambda_{n+1}^k(x)} \\
	&= \intRd{(\varphi(x)-s(x))}{\lambda_{n+1}^k(x)}+
		\intRd{[s(\appr{\gamma}_n^k(x))-\varphi(\appr{\gamma}_n^k(x))]}{\lambda_n^k(x)} \\
	&\phantom{=}+\intRd{\varphi(\appr{\gamma}_n^k(x))}{\lambda_n^k(x)} \\
	&= \sum_{i\in\Zd}\left\{\intg{E_i^k}{(\varphi(x)-\varphi(x_i^k))}{\lambda_{n+1}^k(x)}+
		\intg{E_i^k}{(\varphi(x_i^k)-\varphi(x))}{(\appr{\gamma}_n^k\#\lambda_n^k)(x)}\right\} \\
	&\phantom{=}+\intRd{\varphi(\appr{\gamma}_n^k(x))}{\lambda_n^k(x)}.
\end{align*}
Notice that $\appr{\gamma}_n^k\#\lambda_n^k$ is a probability measure. The Lipschitz continuity of $\varphi$ entails
\begin{align*}
	& \abs{\intRd{\varphi(x)}{\lambda_{n+1}^k(x)}-\intRd{\varphi(\appr{\gamma}_n^k(x))}{\lambda_n^k(x)}} \\
	& \ \leq\lip{\varphi}\sum_{i\in\Zd}\left\{\intg{E_i^k}{\abs{x-x_i^k}}{\lambda_{n+1}^k(x)}+
		\intg{E_i^k}{\abs{x_i^k-x}}{(\appr{\gamma}_n^k\#\lambda_n^k)(x)}\right\}
\intertext{whence, since $\abs{x-x_i^k}\leq\diam{E_i^k}=\sqrt{d}h_k$ for all $x\in E_i^k$,}
	& \ \leq\lip{\varphi}\sqrt{d}h_k\sum_{i\in\Zd}\left\{\lambda_{n+1}^k(E_i^k)+
		(\appr{\gamma}_n^k\#\lambda_n^k)(E_i^k)\right\}=2\lip{\varphi}\sqrt{d}h_k
\end{align*}
and, due to the arbitrariness of $\varphi$ and $n$, the thesis follows.
\end{proof}

To address the convergence of the measures $\lambda_n^k$, it is convenient to introduce the following linear-in-time interpolation:
\begin{equation}
	\Lambda_t^k=\sum_{n=0}^{N_k-1}\left[\left(1-\frac{t-t_n^k}{\Dt_k}\right)\lambda_n^k+
		\frac{t-t_n^k}{\Dt_k}\lambda_{n+1}^k\right]\ind{[t_n^k,\,t_{n+1}^k]}(t),
	\label{eq:Lambda.interp}
\end{equation}
with $\Lambda_t^k$ a probability measure for all $t\in[0,\,T]$ and all $k\geq 0$. This definition is reminiscent of the definition of $M_t^k$ (cf. Section \ref{sec:existence}). 

In order to prove a convergence result about $\Lambda_t^k$, from now on we make the following assumption:
\begin{assumption} \label{ass:mesh.param}
The mesh parameters $h_k,\,\Dt_k$ satisfy
\begin{equation*}
	h_k=o(\Dt_k) \quad \text{for\ } k\to\infty.
\end{equation*}
\end{assumption}
As a consequence, there exists a sequence $(\beta_k)_{k\geq 0}\subset\R$, with $\beta_k>0$ all $k$ and $\lim_{k\to\infty}\beta_k=0$, such that $h_k=\beta_k\Dt_k$. By convergence, it further results $\sup_{k\geq 0}h_k,\,\sup_{k\geq 0}\Dt_k,\,\sup_{k\geq 0}\beta_k<+\infty$.

\subsection{Regularity of $\bs{\Lambda_\ccdot^k}$}
This section is devoted to establish some regularity properties of the curves $\Lambda_\ccdot^k$.

\begin{lemma} \label{lemma:lambda_n_k}
We have $\lambda_n^k\in\Pone$ for all $n=0,\,\dots,\,N_k$ and all $k\geq 0$. In particular,
\begin{equation*}
	\sup_{k\geq 0}\sup_{0\leq n\leq N_k}\intRd{\abs{x}}{\lambda_n^k(x)}<+\infty,
\end{equation*}
\ie, first order moments of the $\lambda_n^k$'s are uniformly bounded.
\end{lemma}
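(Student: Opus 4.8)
The plan is to follow the same telescoping strategy used for Lemma~\ref{lemma:mu_n_k}, adapted to the approximate dynamics. Two features distinguish the present situation: the one-step map is now the \emph{approximate} flow map $\appr{\gamma}_n^k$ rather than $\gamma_n^k$, and the relation $\lambda_{n+1}^k=\appr{\gamma}_n^k\#\lambda_n^k$ holds only up to a spatial-discretization error, which is precisely what Lemma~\ref{lemma:approx.pushfwd} quantifies. I therefore expect a one-step recursion of the form $\intRd{\abs{x}}{\lambda_{n+1}^k}\leq\intRd{\abs{x}}{\lambda_n^k}+V\Dt_k+2\sqrt{d}h_k$, in which the $V\Dt_k$ term is the genuine transport contribution (as in Lemma~\ref{lemma:mu_n_k}) and the extra $2\sqrt{d}h_k$ term records the grid error.

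First I would treat the initial datum separately, since Lemma~\ref{lemma:approx.pushfwd} links only consecutive iterates and not $\lambda_0^k$ to $\incond{\mu}$. From $\rho_i^0=h_k^{-d}\incond{\mu}(E_i^k)$, see \eqref{eq:numscheme.incond}, one has $\intRd{\abs{x}}{\lambda_0^k}=\sum_{i}\rho_i^0\int_{E_i^k}\abs{x}\,dx$. Bounding $\abs{x}\leq\abs{x_i^k}+\sqrt{d}h_k$ on $E_i^k$ (because $\diam{E_i^k}=\sqrt{d}h_k$), then $\abs{x_i^k}\leq\abs{x}+\sqrt{d}h_k$ back under $\incond{\mu}$, and using $\rho_i^0\,h_k^d=\incond{\mu}(E_i^k)$ together with $\sum_i\incond{\mu}(E_i^k)=1$, I would obtain $\intRd{\abs{x}}{\lambda_0^k}\leq\intRd{\abs{x}}{\incond{\mu}}+2\sqrt{d}h_k$. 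This is finite by Assumption~\ref{ass:in.cond} and bounded uniformly in $k$ since $\sup_{k\geq 0}h_k<+\infty$.

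For the inductive step I would invoke Lemma~\ref{lemma:approx.pushfwd}. To avoid any concern about $\abs{x}$ being unbounded and the first moment possibly being infinite a priori, I would apply the lemma to the truncations $\varphi_R(x):=\min\{\abs{x},\,R\}$, which are bounded and Lipschitz with constant one, giving $\intRd{\varphi_R}{\lambda_{n+1}^k}\leq\intRd{\varphi_R\circ\appr{\gamma}_n^k}{\lambda_n^k}+2\sqrt{d}h_k$. Since $\varphi_R(\appr{\gamma}_n^k(x))\leq\abs{\appr{\gamma}_n^k(x)}=\abs{x+\appr{v}_n^k(x)\Dt_k}\leq\abs{x}+V\Dt_k$, using the bound $\abs{\appr{v}_n^k}\leq V$ already noted for the scheme, the right-hand side is at most $\intRd{\abs{x}}{\lambda_n^k}+V\Dt_k+2\sqrt{d}h_k$. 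Letting $R\to\infty$ and applying monotone convergence on the left then yields the claimed one-step recursion and, at the same time, the finiteness of the first moment of $\lambda_{n+1}^k$, so that the induction simultaneously establishes $\lambda_n^k\in\Pone$ and the quantitative estimate.

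Finally I would sum the recursion telescopically, obtaining $\intRd{\abs{x}}{\lambda_n^k}\leq\intRd{\abs{x}}{\lambda_0^k}+n\left(V\Dt_k+2\sqrt{d}h_k\right)$ for $n\leq N_k$. Using $N_k\Dt_k=T$ gives $nV\Dt_k\leq VT$, while for the error term I would write $h_k=\beta_k\Dt_k$ (Assumption~\ref{ass:mesh.param}) so that $2\sqrt{d}\,n\,h_k\leq 2\sqrt{d}\,N_k h_k=2\sqrt{d}\,T\beta_k$. Combined with the base-case bound, the right-hand side is independent of $n$ and bounded uniformly in $k$, since $\sup_{k\geq 0}h_k<+\infty$ and $\sup_{k\geq 0}\beta_k<+\infty$; this is exactly the assertion. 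The main point to watch, and the genuine role of Assumption~\ref{ass:mesh.param}, is the accumulation of the discretization error: each of the $N_k=T/\Dt_k$ steps contributes $2\sqrt{d}h_k$, for a total of $2\sqrt{d}(T/\Dt_k)h_k=2\sqrt{d}T\beta_k$, which stays bounded precisely because $h_k/\Dt_k=\beta_k$ remains bounded. Without the mesh condition this cumulative error need not be controlled, so this is where the hypothesis $h_k=o(\Dt_k)$ enters the argument.
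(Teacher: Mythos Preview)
Your proposal is correct and follows essentially the same route as the paper: a direct comparison of $\lambda_0^k$ with $\incond{\mu}$ via cell-diameter estimates, then Lemma~\ref{lemma:approx.pushfwd} for the one-step recursion, a telescoping sum, and the substitution $h_k=\beta_k\Dt_k$ to control the accumulated grid error. The only cosmetic differences are that the paper obtains the slightly sharper constant $\sqrt{d}h_k$ in the base case (by comparing with the cell average rather than the cell center) and applies Lemma~\ref{lemma:approx.pushfwd} directly to $\varphi(x)=\abs{x}$ without your truncation step; your use of $\varphi_R$ and monotone convergence is a harmless extra safeguard, not a different argument.
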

\begin{proof}
\begin{enumerate}
\item \label{lambda_0_k} We begin by proving an inequality for $n=0$. Recalling \eq\eqref{eq:numscheme.incond}, we have
\begin{equation*}
	\intRd{\abs{x}}{\lambda_0^k(x)}=\sum_{i\in\Zd}\rho_i^0\intg{E_i^k}{\abs{x}}{x}=
		\sum_{i\in\Zd}\frac{1}{h_k^d}\intg{E_i^k}{\abs{x}}{x}\,\incond{\mu}(E_i^k)=
			\intRd{s(x)}{\incond{\mu}(x)},
\end{equation*}
where $s$ is the simple function $s=\sum_{i\in\Zd}\alpha_i\ind{E_i^k}$ with $\alpha_i=\frac{1} {h_k^d}\int_{E_i^k}\abs{x}\,dx$. Thus
\begin{align*}
	\abs{\intRd{\abs{x}}{\lambda_0^k(x)}-\intRd{\abs{x}}{\incond{\mu}(x)}} &\leq
		\intRd{\abs{s(x)-\abs{x}}}{\incond{\mu}(x)}\\
		=&\sum_{i\in\Zd}\intg{E_i^k}{\abs{\alpha_i-\abs{x}}}{\incond{\mu}(x)}.
\intertext{Considering that $\abs{\alpha_i-\abs{x}}=\abs{\frac{1}{h_k^d}\int_{E_i^k}(\abs{y}-\abs{x})\,dy}\leq\frac{1}{h_k^d}\int_{E_i^k}\abs{x-y}\,dy$, we further deduce}
	&\leq\sum_{i\in\Zd}\frac{1}{h_k^d}\intg{E_i^k}{\intg{E_i^k}{\abs{x-y}}{y}}{\incond{\mu}(x)}\leq
		\sqrt{d}h_k,
\end{align*}
where we have used that, in the double integral, both $x$ and $y$ are points of the same grid cell $E_i^k$, hence $\abs{x-y}\leq\diam{E_i^k}=\sqrt{d}h_k$. It follows
\begin{equation*}
	\intRd{\abs{x}}{\lambda_0^k(x)}\leq\intRd{\abs{x}}{\incond{\mu}(x)}+\sqrt{d}h_k,
\end{equation*}
whence the thesis for $n=0$.

\item We obtain the general case $0<n\leq N_k$ by induction from this and Lemma~\ref{lemma:approx.pushfwd}. Choosing $\varphi(x)=\abs{x}$ in the latter yields
\begin{align*}
	\intRd{\abs{x}}{\lambda_{n+1}^k(x)} &\leq \intRd{\abs{\appr{\gamma}_n^k(x)}}{\lambda_n^k(x)}+2\sqrt{d}h_k \\
	&\leq\intRd{\abs{x}}{\lambda_n^k(x)}+\Dt_k\intRd{\abs{\appr{v}_n^k(x)}}{\lambda_n^k(x)}+2\sqrt{d}h_k \\
	&\leq\intRd{\abs{x}}{\lambda_n^k(x)}+V\Dt_k+2\sqrt{d}h_k
\end{align*}
whence, summing telescopically and using $h_k=\beta_k\Dt_k$,
\begin{align*}
	\intRd{\abs{x}}{\lambda_n^k(x)} &\leq \intRd{\abs{x}}{\lambda_0^k(x)}+n(V\Dt_k+2\sqrt{d}h_k) \\
	&\leq\intRd{\abs{x}}{\incond{\mu}(x)}+(V+2\sqrt{d}\beta_k)t_n^k+\sqrt{d}h_k.
\end{align*}
The thesis now follows taking the supremum of both sides in $n$ and $k$ while considering that $t_n^k\leq T$. \qedhere
\end{enumerate}
\end{proof}

\begin{lemma} \label{lemma:Lambdatk.in.CTPone}
For all $k\geq 0$, 
\begin{equation}
	\wass{\Lambda_s^k}{\Lambda_t^k}\leq (V+2\sqrt{d}\bar{\beta})\abs{t-s},
	\label{eq:unif.lip.2}
\end{equation}
where $\bar{\beta}:=\sup_{k\geq 0}\beta_k$, \ie, the curves $\Lambda_\ccdot^k\in\CTPone$ are Lipschitz continuous uniformly in $k$. Moreover,
\begin{equation*}
	\sup_{k\geq 0}\sup_{t\in[0,\,T]}\intRd{\abs{x}}{\Lambda_t^k(x)}<+\infty.
\end{equation*}
\end{lemma}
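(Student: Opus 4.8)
The plan is to mirror exactly the structure of the proof of Lemma~\ref{lemma:Mtk.in.CTPone}, which established the analogous statement for the curves $M_\ccdot^k$. The only differences are that the one-step push forward is now the \emph{approximate} flow map $\appr{\gamma}_n^k$ rather than the exact $\gamma_n^k$, so the clean identity $\mu_{n+1}^k=\gamma_n^k\#\mu_n^k$ is replaced by the \emph{approximate} push-forward estimate of Lemma~\ref{lemma:approx.pushfwd}, which carries an extra error term $2\lip{\varphi}\sqrt{d}h_k$. Since $h_k=\beta_k\Dt_k$ by Assumption~\ref{ass:mesh.param}, this error is comparable to $\Dt_k$ and will produce the modified Lipschitz constant $V+2\sqrt{d}\bar\beta$.

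First I would establish the one-step Wasserstein bound $\wass{\lambda_i^k}{\lambda_{i+1}^k}\leq(V+2\sqrt{d}\bar\beta)\Dt_k$. For any $\varphi\in\Lip$, I write
\begin{align*}
	\intRd{\varphi}{(\lambda_{i+1}^k-\lambda_i^k)} &=
		\left(\intRd{\varphi}{\lambda_{i+1}^k}-\intRd{\varphi\circ\appr{\gamma}_i^k}{\lambda_i^k}\right)
		+\intRd{(\varphi\circ\appr{\gamma}_i^k-\varphi)}{\lambda_i^k}.
\end{align*}
The first parenthesis is bounded by $2\sqrt{d}h_k$ via Lemma~\ref{lemma:approx.pushfwd} (with $\lip{\varphi}\leq 1$), while the second is bounded, using $\abs{\appr{\gamma}_i^k(x)-x}=\Dt_k\abs{\appr{v}_i^k(x)}\leq V\Dt_k$ and Lipschitz continuity of $\varphi$, by $V\Dt_k$. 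Taking the supremum over $\varphi\in\Lip$ and using $h_k=\beta_k\Dt_k\leq\bar\beta\Dt_k$ yields the claimed one-step bound.

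Next, to pass from the one-step bound to \eqref{eq:unif.lip.2}, I would reuse verbatim the telescoping argument from part (ii) of the proof of Lemma~\ref{lemma:Mtk.in.CTPone}. For $s\leq t$ I locate indices $m\leq n$ with $s\in[t_m^k,\,t_{m+1}^k]$ and $t\in[t_n^k,\,t_{n+1}^k]$, decompose $\Lambda_s^k$ and $\Lambda_t^k$ through the interpolation formula \eqref{eq:Lambda.interp}, apply the triangle inequality across the intervening grid points exactly as in \eqref{distMsMt}, and bound each term by the one-step estimate. The endpoint contributions again carry the weights $(t_{m+1}^k-s)/\Dt_k$ and $(t-t_n^k)/\Dt_k$, so the telescoping collapses to $(V+2\sqrt{d}\bar\beta)(t-s)$, which is \eqref{eq:unif.lip.2}. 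The uniform first-moment bound is the easiest part: fixing $t\in[t_n^k,\,t_{n+1}^k]$ and expanding $\Lambda_t^k$ as a convex combination of $\lambda_n^k$ and $\lambda_{n+1}^k$, I estimate $\intRd{\abs{x}}{\Lambda_t^k}$ by the convex combination of the two corresponding moments, each of which is uniformly bounded by Lemma~\ref{lemma:lambda_n_k}; taking suprema in $t$ and $k$ finishes the claim.

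The main obstacle—if any—is purely bookkeeping rather than conceptual: I must keep the factor $\sqrt{d}$ from the cell diameter and the factor $\bar\beta=\sup_{k\geq 0}\beta_k$ (finite by the remark following Assumption~\ref{ass:mesh.param}) attached to the error term throughout, so that the extra $2\sqrt{d}h_k$ per step accumulates correctly into the constant $V+2\sqrt{d}\bar\beta$ rather than degrading the Lipschitz estimate. Because $h_k$ and $\Dt_k$ are proportional, this accumulated error stays of order $\Dt_k$ per step and does not blow up under telescoping, which is precisely what Assumption~\ref{ass:mesh.param} guarantees. Everything else is a direct transcription of the $M_\ccdot^k$ argument.
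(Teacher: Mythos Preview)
Your proposal is correct and follows essentially the same approach as the paper: both obtain the one-step bound $\wass{\lambda_i^k}{\lambda_{i+1}^k}\leq(V+2\sqrt{d}\beta_k)\Dt_k$ by combining Lemma~\ref{lemma:approx.pushfwd} with $\abs{\appr{\gamma}_i^k(x)-x}\leq V\Dt_k$, then telescope exactly as in Lemma~\ref{lemma:Mtk.in.CTPone}, and handle the first-moment bound via the convex-combination expansion together with Lemma~\ref{lemma:lambda_n_k}. The only cosmetic difference is that the paper presents the moment bound before the Lipschitz estimate rather than after.
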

\begin{proof}
\begin{enumerate}
\item Fix $k\geq 0$ and $t\in[0,\,T]$. There exists $0\leq n\leq N_k$ such that $t\in[t_n^k,\,t_{n+1}^k]$, hence
\begin{align*}
	\intRd{\abs{x}}{\Lambda_t^k(x)} &= \left(1-\frac{t-t_n^k}{\Dt_k}\right)\intRd{\abs{x}}{\lambda_n^k(x)}+
		\frac{t-t_n^k}{\Dt_k}\intRd{\abs{x}}{\lambda_{n+1}^k(x)}.
\intertext{Owing to Lemma~\ref{lemma:lambda_n_k}, we can find a uniform upper bound on the first moments of the $\lambda_n^k$'s: there exists a constant $C>0$, independent of $n$ and $k$, such that}
	&\leq \left(1-\frac{t-t_n^k}{\Dt_k}\right)C+\frac{t-t_n^k}{\Dt_k}C=C,
\end{align*}
which says that $\Lambda_t^k\in\Pone$ for all $k\geq 0$ and all $t\in[0,\,T]$ with uniformly bounded first moment.

\item We argue the Lipschitz continuity of the mapping $t\mapsto\Lambda_t^k$ as in Lemma~\ref{lemma:Mtk.in.CTPone}. In particular, for $s,\,t\in[0,\,T]$ with $s\leq t$ we obtain
\begin{equation}
	\wass{\Lambda_s^k}{\Lambda_t^k}\leq\frac{t_{m+1}^k-s}{\Dt_k}\wass{\lambda_m^k}{\lambda_{m+1}^k}+
		\sum_{j=m+1}^{n-1}\wass{\lambda_j^k}{\lambda_{j+1}^k}+
			\frac{t-t_n^k}{\Dt_k}\wass{\lambda_n^k}{\lambda_{n+1}^k},
	\label{distLambdasLambdat}
\end{equation}
where $0\leq m\leq n\leq N_k$ are such that $s\in[t_m^k,\,t_{m+1}^k]$ and $t\in[t_n^k,\,t_{n+1}^k]$. In order to estimate $\wass{\lambda_i^k}{\lambda_{i+1}^k}$ for a generic $0\leq i\leq N_k$ we fix $\varphi\in\Lip$ and use Lemma~\ref{lemma:approx.pushfwd}:
\begin{align*}
	\intRd{\varphi}{(\lambda_{i+1}^k-\lambda_i^k)} &\leq
		\intRd{[\varphi(\appr{\gamma}_n^k(x))-\varphi(x)]}{\lambda_n^k(x)}+2\sqrt{d}h_k \\
	&\leq \intRd{\abs{\appr{\gamma}_n^k(x)-x}}{\lambda_n^k(x)}+2\sqrt{d}h_k
		=\Dt_k\intRd{\abs{\appr{v}_n^k(x)}}{\lambda_n^k(x)}+2\sqrt{d}h_k \\
	&\leq V\Dt_k+2\sqrt{d}h_k=(V+2\sqrt{d}\beta_k)\Dt_k.
\end{align*}
Consequently $\wass{\lambda_i^k}{\lambda_{i+1}^k}\leq C\Dt_k$ all $i$, where $C:=V+2\sqrt{d}\sup_{k\geq 0}\beta_k$. Computing as in Lemma~\ref{lemma:Mtk.in.CTPone}, in view of \eq\eqref{distLambdasLambdat} this yields $\wass{\Lambda_s^k}{\Lambda_t^k}\leq C(t-s)$ and we have the thesis. \qedhere
\end{enumerate}
\end{proof}

\subsection{Limit equation for $\bs{\Lambda_\ccdot^k}$}
The next step is to find, similarly to what we did in Section~\ref{sec:limit.mu}, an equation satisfied by $\Lambda_\ccdot^k$ in which to pass to the limit $k\to\infty$. Fix a test function $\phi\in\Cc$ and notice that the mapping $t\mapsto\int_{\Rd}\phi\,d\Lambda_t^k$ is Lipschitz continuous because so is the mapping $t\mapsto\Lambda_t^k$ in view of Lemma~\ref{lemma:Lambdatk.in.CTPone}. Then, owing to Rademacher's Theorem, it is a.e. differentiable. Using expression \eqref{eq:Lambda.interp}, we find that its derivative is
\begin{equation}
	\frac{d}{dt}\intRd{\phi}{\Lambda_t^k}=\frac{1}{\Dt_k}\sum_{n=0}^{N_k-1}
		\intRd{\phi}{(\lambda_{n+1}^k-\lambda_n^k)}\ind{[t_n^k,\,t_{n+1}^k]}(t).
	\label{eq:deriv.Lambda}
\end{equation}

Let us introduce now the function $g_n^k:\R^d\to\Rd$,
\begin{equation*}
	g_n^k(x)=x+v[\lambda_n^k](x)\Dt_k,
\end{equation*}
\ie, the flow map $\gamma_n^k$ (cf. \eq\eqref{eq:flowmap}) computed \wrt the measure $\lambda_n^k$ instead of $\mu_n^k$. Then
\begin{align*}
	\frac{d}{dt}\intRd{\phi}{\Lambda_t^k} &= \frac{1}{\Dt_k}\sum_{n=0}^{N_k-1}
		\left\{\intRd{\phi}{\lambda_{n+1}^k}-\intRd{\phi\circ g_n^k}{\lambda_n^k}\right\}
			\ind{[t_n^k,\,t_{n+1}^k]}(t) \\
	&\phantom{=} +\frac{1}{\Dt_k}\sum_{n=0}^{N_k-1}\intRd{(\phi\circ g_n^k-\phi)}{\lambda_n^k}
		\ind{[t_n^k,\,t_{n+1}^k]}(t),
\intertext{whence, expanding  $\phi\circ g_n^k$ in the second term at the right-hand side with Lagrange's reminder (cf. the analogous calculation performed in Section~\ref{sec:limit.mu} for the function $\phi\circ\gamma_n^k$),}
	&= \frac{1}{\Dt_k}\sum_{n=0}^{N_k-1}\left\{\intRd{\phi}{\lambda_{n+1}^k}-
		\intRd{\phi\circ g_n^k}{\lambda_n^k}\right\}\ind{[t_n^k,\,t_{n+1}^k]}(t) \\
	&\phantom{=}+\sum_{n=0}^{N_k-1}\left\{\intRd{v[\lambda_n^k]\cdot\nabla\phi}{\lambda_n^k}+
		\frac{1}{2}\Dt_k\intRd{(D^2\phi(\xlag)v[\lambda_n^k])\cdot v[\lambda_n^k]}{\lambda_n^k}\right\}
			\ind{[t_n^k,\,t_{n+1}^k]}(t),
\end{align*}
where $\xlag$ is a point of the segment connecting $x$ and $x+v[\lambda_n^k]\Dt_k$.

On the other hand, computing as in Section~\ref{sec:limit.mu} we find
\begin{align*}
	\intRd{v[\Lambda_t^k]\cdot\nabla\phi}{\Lambda_t^k} &= 
		\sum_{n=0}^{N_k-1}\intRd{v[\lambda_n^k]\cdot\nabla\phi}{\lambda_n^k}\ind{[t_n^k,\,t_{n+1}^k]}(t) \\
	&\phantom{=}+\sum_{n=0}^{N_k-1}\left\{\frac{t-t_n^k}{\Dt_k}
		\intRd{v[\lambda_n^k]\cdot\nabla\phi}{(\lambda_{n+1}^k-\lambda_n^k)}\right.\\
	&\phantom{=}-\left(\frac{t-t_n^k}{\Dt_k}\right)^2
			\intRd{(v[\lambda_{n+1}^k]-v[\lambda_n^k])\cdot\nabla\phi}{\lambda_n^k} \\
	&\phantom{=}+\left.\left(\frac{t-t_n^k}{\Dt_k}\right)^2
		\intRd{(v[\lambda_{n+1}^k]-v[\lambda_n^k])\cdot\nabla\phi}{\lambda_{n+1}^k}\right\}
			\ind{[t_n^k,\,t_{n+1}^k]}(t),
\end{align*}
hence finally
\begin{align*}
	\frac{d}{dt}\intRd{\phi}{\Lambda_t^k}-\intRd{v[\Lambda_t^k]\cdot\nabla\phi}{\Lambda_t^k} &=
		\frac{1}{\Dt_k}\sum_{n=0}^{N_k-1}\left\{\intRd{\phi}{\lambda_{n+1}^k}-
			\intRd{\phi\circ g_n^k}{\lambda_n^k}\right\}\ind{[t_n^k,\,t_{n+1}^k]}(t) \\
	&\phantom{=}+\sum_{n=0}^{N_k-1}\left\{\frac{1}{2}\Dt_k
		\intRd{(D^2\phi(\xlag)v[\lambda_n^k])\cdot v[\lambda_n^k]}{\lambda_n^k}\right. \\
	&\phantom{=}-\frac{t-t_n^k}{\Dt_k}
		\intRd{v[\lambda_n^k]\cdot\nabla\phi}{(\lambda_{n+1}^k-\lambda_n^k)} \\
	&\phantom{=}+\left(\frac{t-t_n^k}{\Dt_k}\right)^2
			\intRd{(v[\lambda_{n+1}^k]-v[\lambda_n^k])\cdot\nabla\phi}{\lambda_n^k} \\
	&\phantom{=}-\left.\left(\frac{t-t_n^k}{\Dt_k}\right)^2
		\intRd{(v[\lambda_{n+1}^k]-v[\lambda_n^k])\cdot\nabla\phi}{\lambda_{n+1}^k}\right\}
			\ind{[t_n^k,\,t_{n+1}^k]}(t).
\end{align*}
This is formally an equation satisfied by $\Lambda_\ccdot^k$. A derivation analogous to that performed in Section~\ref{sec:limit.mu} gives
\begin{align*}
	\abs{\frac{d}{dt}\intRd{\phi}{\Lambda_t^k}-\intRd{v[\Lambda_t^k]\cdot\nabla\phi}{\Lambda_t^k}} &\leq
		\frac{1}{\Dt_k}\sum_{n=0}^{N_k-1}\abs{\intRd{\phi}{\lambda_{n+1}^k}-
			\intRd{\phi\circ g_n^k}{\lambda_n^k}}\ind{[t_n^k,\,t_{n+1}^k]}(t) \\
	&\phantom{=}+C\sum_{n=0}^{N_k-1}\left\{\Dt_k+\wass{\lambda_n^k}{\lambda_{n+1}^k}\right\}
		\ind{[t_n^k,\,t_{n+1}^k]}(t),
\intertext{where $C>0$ is a constant depending only on $V$, $\lip{v}$, $\lip{\nabla\phi}=\infnorm{D^2\phi}$. In addition, from \eq\eqref{eq:unif.lip.2} we know $\wass{\lambda_n^k}{\lambda_{n+1}^k}\leq(V+2\sqrt{d}\bar{\beta})\Dt_k$, thus}
	&\leq \frac{1}{\Dt_k}\sum_{n=0}^{N_k-1}\abs{\intRd{\phi}{\lambda_{n+1}^k}-
		\intRd{\phi\circ g_n^k}{\lambda_n^k}}\ind{[t_n^k,\,t_{n+1}^k]}(t)+C\Dt_k,
\end{align*}
$C$ being a new constant including the previous one and $V+2\sqrt{d}\bar{\beta}$.

To estimate the remaining term at the right-hand side, we need to adapt Lemma~\ref{lemma:approx.pushfwd} to $g_n^k$.
\begin{lemma} \label{lemma:approx.pushfwd.2}
For all Lipschitz continuous $\varphi:\Rd\to\R$ we have
\begin{equation*}
	\abs{\intRd{\varphi}{\lambda_{n+1}^k}-\intRd{\varphi\circ g_n^k}{\lambda_n^k}}\leq
		C\lip{\varphi}\sqrt{d}h_k
\end{equation*}
where $C:=2+\lip{v}\sup_{k\geq 0}\Dt_k$.
\end{lemma}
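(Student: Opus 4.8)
The plan is to leverage Lemma~\ref{lemma:approx.pushfwd}, which already bounds the analogous quantity for the truly piecewise-constant flow map $\appr{\gamma}_n^k$, and then to absorb the discrepancy between $g_n^k$ and $\appr{\gamma}_n^k$. The only difference between the two maps is that $g_n^k$ uses the exact velocity $v[\lambda_n^k](x)$ whereas $\appr{\gamma}_n^k$ uses its cell-wise frozen value $\appr{v}_n^k(x)=v[\lambda_n^k](x_i^k)$ on $E_i^k$. First I would insert $\appr{\gamma}_n^k$ via the triangle inequality:
\begin{align*}
	\abs{\intRd{\varphi}{\lambda_{n+1}^k}-\intRd{\varphi\circ g_n^k}{\lambda_n^k}} &\leq
		\abs{\intRd{\varphi}{\lambda_{n+1}^k}-\intRd{\varphi\circ\appr{\gamma}_n^k}{\lambda_n^k}} \\
	&\phantom{\leq}+\intRd{\abs{\varphi\circ\appr{\gamma}_n^k-\varphi\circ g_n^k}}{\lambda_n^k}.
\end{align*}
Lemma~\ref{lemma:approx.pushfwd} bounds the first summand by $2\lip{\varphi}\sqrt{d}h_k$, which is exactly the term accounting for the ``$2$'' in the constant $C$.

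For the second summand I would use the Lipschitz continuity of $\varphi$ to write $\abs{\varphi(\appr{\gamma}_n^k(x))-\varphi(g_n^k(x))}\leq\lip{\varphi}\abs{\appr{\gamma}_n^k(x)-g_n^k(x)}=\lip{\varphi}\Dt_k\abs{\appr{v}_n^k(x)-v[\lambda_n^k](x)}$. The crucial observation is that for $x\in E_i^k$ one has $\appr{v}_n^k(x)-v[\lambda_n^k](x)=v[\lambda_n^k](x_i^k)-v[\lambda_n^k](x)$, so both velocities are evaluated against the \emph{same} measure $\lambda_n^k$; hence Assumption~\ref{ass:prop-v}-\eqref{item:ass-lip} contributes only its spatial part, giving $\abs{v[\lambda_n^k](x_i^k)-v[\lambda_n^k](x)}\leq\lip{v}\abs{x_i^k-x}\leq\lip{v}\sqrt{d}h_k$, since $x,\,x_i^k\in E_i^k$ and $\diam{E_i^k}=\sqrt{d}h_k$. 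Integrating and using that $\lambda_n^k$ is a probability measure (Lemma~\ref{lemma:lambdank-proba}) yields
\begin{equation*}
	\intRd{\abs{\varphi\circ\appr{\gamma}_n^k-\varphi\circ g_n^k}}{\lambda_n^k}\leq
		\lip{\varphi}\lip{v}\Dt_k\sqrt{d}h_k.
\end{equation*}

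Combining the two bounds and estimating $\Dt_k\leq\sup_{k\geq 0}\Dt_k$ gives
\begin{equation*}
	\abs{\intRd{\varphi}{\lambda_{n+1}^k}-\intRd{\varphi\circ g_n^k}{\lambda_n^k}}\leq
		\left(2+\lip{v}\sup_{k\geq 0}\Dt_k\right)\lip{\varphi}\sqrt{d}h_k=C\lip{\varphi}\sqrt{d}h_k,
\end{equation*}
as claimed. There is no genuine obstacle here: the lemma is essentially a perturbation of Lemma~\ref{lemma:approx.pushfwd}, and the one point requiring care is recognizing that replacing $\appr{\gamma}_n^k$ by $g_n^k$ invokes the Lipschitz assumption on $v$ with coincident measures, so that the $\wass{\cdot}{\cdot}$ term drops out and only the spatial modulus of continuity survives. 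Everything else reduces to the diameter estimate on a grid cell and the telescoping already available in the surrounding argument.
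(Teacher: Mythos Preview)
Your proof is correct and follows essentially the same route as the paper: insert $\appr{\gamma}_n^k$ via the triangle inequality, apply Lemma~\ref{lemma:approx.pushfwd} for the first piece, and control $\abs{\appr{\gamma}_n^k(x)-g_n^k(x)}=\Dt_k\abs{v[\lambda_n^k](x_i^k)-v[\lambda_n^k](x)}$ by the spatial Lipschitz constant of $v$ and the cell diameter for the second. The only cosmetic difference is that the paper presents the second estimate first and then invokes the triangle inequality together with Lemma~\ref{lemma:approx.pushfwd}.
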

\begin{proof}
It suffices to observe that
\begin{align*}
	\abs{\intRd{\varphi(\appr{\gamma}_n^k(x))}{\lambda_n^k(x)}-
		\intRd{\varphi(g_n^k(x))}{\lambda_n^k(x)}} &\leq \lip{\varphi}
			\intRd{\abs{\appr{\gamma}_n^k(x)-g_n^k(x)}}{\lambda_n^k(x)} \\
	&= \Dt_k\lip{\varphi}\sum_{i\in\Zd}
		\intg{E_i^k}{\abs{v[\lambda_n^k](x_i^k)-v[\lambda_n^k](x)}}{\lambda_n^k(x)} \\
	&= \Dt_k\lip{\varphi}\lip{v}\sum_{i\in\Zd}
		\intg{E_i^k}{\abs{x_i^k-x}}{\lambda_n^k(x)} \\
	&\leq \sqrt{d}h_k\Dt_k\lip{\varphi}\lip{v}.
\end{align*}
The thesis follows from this inequality, combining the triangle inequality and Lemma~\ref{lemma:approx.pushfwd}.
\end{proof}

With this result, we can further manipulate the previous inequality and obtain
\begin{equation*}
	\abs{\frac{d}{dt}\intRd{\phi}{\Lambda_t^k}-\intRd{v[\Lambda_t^k]\cdot\nabla\phi}{\Lambda_t^k}}
		\leq C(\beta_k+\Dt_k)
\end{equation*}
which, integrating both sides in time from $0$ to $t\leq T$, implies
\begin{align*}
	\abs{\lint_0^t\left(\frac{d}{d\tau}
		\intRd{\phi}{\Lambda_\tau^k}-\intRd{v[\Lambda_\tau^k]\cdot\nabla\phi}{\Lambda_\tau^k}\right)\,d\tau} &\leq
	\lint_0^t\abs{\frac{d}{d\tau}\intRd{\phi}{\Lambda_\tau^k}-
		\intRd{v[\Lambda_\tau^k]\cdot\nabla\phi}{\Lambda_\tau^k}}\,d\tau \\
	&\leq CT(\beta_k+\Dt_k)
\end{align*}
and finally, taking the limit $k\to\infty$,
\begin{equation}
	\lim_{k\to\infty}\abs{\intRd{\phi}{\Lambda_t^k}-\intRd{\phi}{\lambda_0^k}-
		\lint_0^t\intRd{v[\Lambda_\tau^k]\cdot\nabla\phi}{\Lambda_\tau^k}\,d\tau}=0.
	\label{eq:Lambdatk.limit}
\end{equation}

Thanks to \eq\eqref{eq:Lambdatk.limit}, we are in a position to prove our convergence result of the numerical scheme.
\begin{theorem} \label{theo:convergence}
Let Assumptions~\ref{ass:prop-v}--\ref{ass:mesh.param} hold and assume that the sequence $(\Lambda_\ccdot^k)_{k\geq 0}$ converges to some $\mu_\ccdot$ in $\CTPone$ when $k\to\infty$. Then $\mu_\ccdot$ is a weak solution to problem \eqref{eq:cauchy}.
\end{theorem}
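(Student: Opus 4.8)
The plan is to reproduce, \emph{mutatis mutandis}, the concluding argument of Section~\ref{sec:limit.mu}, with the curves $\Lambda_\ccdot^k$ playing the role of $M_\ccdot^k$ and the iterates $\lambda_n^k$ that of $\mu_n^k$. Indeed, the entire content of the theorem is to let $k\to\infty$ in the three terms of the already-established relation \eqref{eq:Lambdatk.limit}, exploiting the standing hypothesis that $\sup_{t\in[0,\,T]}\wass{\Lambda_t^k}{\mu_t}\to 0$. A welcome simplification with respect to Theorem~\ref{theo:existence} is that convergence of the \emph{full} sequence is now assumed, so no extraction of a subsequence (and hence no weakening of Lemma~\ref{lemma:conv.int}) is needed here.

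First I would treat the two terms evaluated at a fixed time. For $\intRd{\phi}{\Lambda_t^k}$, exactly as in Lemma~\ref{lemma:conv.int}-(i) one uses that $\phi/\lip{\phi}\in\Lip$ to bound $\abs{\intRd{\phi}{(\mu_t-\Lambda_t^k)}}\leq\lip{\phi}\wass{\Lambda_t^k}{\mu_t}\to 0$, giving $\intRd{\phi}{\Lambda_t^k}\to\intRd{\phi}{\mu_t}$ for every $t$. For the initial term $\intRd{\phi}{\lambda_0^k}$ I would first record that $\lambda_0^k$ approximates $\incond{\mu}$ in the Wasserstein metric: repeating the computation of the first part of the proof of Lemma~\ref{lemma:lambda_n_k} verbatim, but now for an arbitrary $\varphi\in\Lip$ in place of $\abs{\cdot}$, yields $\abs{\intRd{\varphi}{(\lambda_0^k-\incond{\mu})}}\leq\sqrt{d}h_k$, hence $\wass{\lambda_0^k}{\incond{\mu}}\leq\sqrt{d}h_k\to 0$ and therefore $\intRd{\phi}{\lambda_0^k}\to\intRd{\phi}{\incond{\mu}}$. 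This is the only genuinely new ingredient compared with Section~\ref{sec:limit.mu}.

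Next I would handle the time-integral term, the exact analogue of Lemma~\ref{lemma:conv.int}-(ii). Since $\abs{\intRd{v[\Lambda_\tau^k]\cdot\nabla\phi}{\Lambda_\tau^k}}\leq V\infnorm{\nabla\phi}$ uniformly in $\tau$ and $k$, dominated convergence reduces the matter to the pointwise-in-$\tau$ convergence of the inner integral. Splitting that difference into a velocity part and a measure part and invoking Assumption~\ref{ass:prop-v}-\eqref{item:ass-lip} to control $\abs{v[\mu_\tau]-v[\Lambda_\tau^k]}\leq\lip{v}\wass{\Lambda_\tau^k}{\mu_\tau}$, together with the Lipschitz continuity of $v[\Lambda_\tau^k]\cdot\nabla\phi$ recorded in \eqref{eq:lip.v.nablaphi}, one obtains
$$\abs{\intRd{v[\mu_\tau]\cdot\nabla\phi}{\mu_\tau}-\intRd{v[\Lambda_\tau^k]\cdot\nabla\phi}{\Lambda_\tau^k}}\leq(\infnorm{\nabla\phi}\lip{v}+L)\wass{\Lambda_\tau^k}{\mu_\tau},$$
with $L$ as in \eqref{eq:lip.v.nablaphi}; the right-hand side tends to $0$ for each $\tau$ by the assumed convergence, and dominated convergence then gives $\int_0^t\intRd{v[\Lambda_\tau^k]\cdot\nabla\phi}{\Lambda_\tau^k}\,d\tau\to\int_0^t\intRd{v[\mu_\tau]\cdot\nabla\phi}{\mu_\tau}\,d\tau$.

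Collecting the three limits inside \eqref{eq:Lambdatk.limit} produces $\intRd{\phi}{\mu_t}=\intRd{\phi}{\incond{\mu}}+\int_0^t\intRd{v[\mu_\tau]\cdot\nabla\phi}{\mu_\tau}\,d\tau$ for every $\phi\in\Cc$ and every $t\in[0,\,T]$, which is precisely \eqref{eq:weak.sol}; hence $\mu_\ccdot$ is a weak solution in the sense of Definition~\ref{def:weak.sol} (the case $t=0$ moreover identifies $\mu_0=\incond{\mu}$). I do not anticipate a real obstacle, since every analytic estimate needed was already assembled for the existence proof and for \eqref{eq:Lambdatk.limit} itself; the place where the mesh condition of Assumption~\ref{ass:mesh.param} does the work is upstream, in guaranteeing \eqref{eq:Lambdatk.limit}, where the spatial-discretization error carried by Lemma~\ref{lemma:approx.pushfwd.2} appears divided by $\Dt_k$ and is therefore controlled precisely by $h_k=o(\Dt_k)$.
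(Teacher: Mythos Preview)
Your proposal is correct and follows essentially the same route as the paper: invoke the analogue of Lemma~\ref{lemma:conv.int} for $\Lambda_\ccdot^k$ to pass to the limit in \eqref{eq:Lambdatk.limit}, the only additional ingredient being the estimate $\wass{\lambda_0^k}{\incond{\mu}}\leq\sqrt{d}h_k$ obtained exactly as you describe. Your remark that no subsequence extraction is needed here (since convergence of the full sequence is assumed) is a welcome clarification over the paper's slightly loose ``up to subsequences'' phrasing.
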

\begin{proof}
Arguing as in Lemma~\ref{lemma:conv.int}, if $\lim_{k\to\infty}\sup_{t\in[0,\,T]}\wass{\Lambda_t^k}{\mu_t}=0$ we know that, up to subsequences,
\begin{equation*}
	\intRd{\phi}{\Lambda_t^k}\to\intRd{\phi}{\mu_t}, \qquad
		\lint_0^t\intRd{v[\Lambda_\tau^k]\cdot\nabla\phi}{\Lambda_\tau^k}\,d\tau\to
			\lint_0^t\intRd{v[\mu_\tau]\cdot\nabla\phi}{\mu_\tau}\,d\tau
				\qquad (k\to\infty).
\end{equation*}
The claim of the theorem then follows from \eq\eqref{eq:Lambdatk.limit} and the arbitrariness of $\phi$, provided we have also $\int_{\Rd}\phi\,d\lambda_0^k\to\int_{\Rd}\phi\,d\incond{\mu}$ when $k\to\infty$. To show the latter fact, it is enough to prove that $\wass{\lambda_0^k}{\incond{\mu}}\to 0$ when $k\to\infty$.

Recall that $d\lambda_0^k=\rho_0^k\,dx$, with $\rho_0^k$ given by \eq\eqref{eq:numscheme.incond}. Fixing $\varphi\in\Lip$ and reasoning like in the proof of Lemma~\ref{lemma:lambda_n_k}-\eqref{lambda_0_k}, we find that
\begin{equation*}
	\intRd{\varphi}{\lambda_0^k}=\intRd{s}{\incond{\mu}},
\end{equation*}
where $s(x)=\sum_{i\in\Zd}\alpha_i\ind{E_i^k}(x)$ and $\alpha_i=\frac{1}{h_k^d}\int_{E_i^k}\varphi(x)\,dx$. Therefore:
\begin{equation*}
	\intRd{\varphi}{(\incond{\mu}-\lambda_0^k)}=\intRd{(\varphi-s)}{\incond{\mu}}=
		\sum_{i\in\Zd}\intg{E_i^k}{(\varphi(x)-\alpha_i)}{\incond{\mu}(x)}.
\end{equation*}
But $\abs{\varphi(x)-\alpha_i}\leq\frac{1}{h_k^d}\int_{E_i^k}\abs{\varphi(x)-\varphi(y)}\,dy\leq\frac{1}{h_k^d}\int_{E_i^k}\abs{x-y}\,dy$, so that from the previous calculations we deduce
\begin{equation*}
	\intRd{\varphi}{(\incond{\mu}-\lambda_0^k)}\leq
		\frac{1}{h_k^d}\sum_{i\in\Zd}\intg{E_i^k}{\intg{E_i^k}{\abs{x-y}}{y}}{\incond{\mu}(x)}\leq\sqrt{d}h_k,
\end{equation*}
whence $\wass{\lambda_0^k}{\incond{\mu}}\leq\sqrt{d}h_k\to 0$ for $k\to\infty$ as desired.
\end{proof}

We conclude this section with a simple criterion which implies the convergence property assumed in Theorem~\ref{theo:convergence}.
\begin{corollary} \label{cor:convergence}
Assume there exists a bounded set $K\subset\Rd$ such that $\supp{\lambda_n^k}\subseteq K$ for all $0\leq n\leq N_k$ and all $k\geq 0$. Then $(\Lambda_\ccdot^k)_{k\geq 0}$ converges in $\CTPone$ to a weak solution of problem \eqref{eq:cauchy} when $k\to\infty$.
\end{corollary}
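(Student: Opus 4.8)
The plan is to verify, by means of the bounded-support hypothesis, the convergence assumption that Theorem~\ref{theo:convergence} takes for granted. Concretely, I would show that the family $\{\Lambda_\ccdot^k\}_{k\geq 0}$ is relatively compact in $\CTPone$, reproducing verbatim the structure of the proof of Proposition~\ref{prop:convergence} for the curves $M_\ccdot^k$. By Ascoli--Arzel\`a's Theorem this reduces to two points: (a) equicontinuity of the curves $t\mapsto\Lambda_t^k$ uniformly in $k$, and (b) relative compactness of $\{\Lambda_t^k\}_{k\geq 0}$ in $\Pone$ for every fixed $t\in[0,\,T]$.

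Point (a) is already available: the uniform Lipschitz estimate \eqref{eq:unif.lip.2} of Lemma~\ref{lemma:Lambdatk.in.CTPone} gives, for any $\eps>0$, a threshold $\delta=\eps/(V+2\sqrt{d}\bar{\beta})$ independent of $k$ such that $\abs{t-s}<\delta$ forces $\wass{\Lambda_s^k}{\Lambda_t^k}<\eps$. For point (b) I would exploit the extra assumption. Since for each $t$ the measure $\Lambda_t^k$ is a convex combination of $\lambda_n^k$ and $\lambda_{n+1}^k$, its support satisfies $\supp{\Lambda_t^k}\subseteq\supp{\lambda_n^k}\cup\supp{\lambda_{n+1}^k}\subseteq K$; hence every $\Lambda_t^k$ is supported in the fixed bounded set $K$. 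Both sufficient conditions invoked in Proposition~\ref{prop:convergence} then hold effortlessly: taking $\varphi(x)=\abs{x}$, whose sublevel sets are compact, one has $\sup_{k\geq 0}\int_{\Rd}\abs{x}\,d\Lambda_t^k\leq\sup_{x\in K}\abs{x}<+\infty$, which yields tightness via \cite[Remark 5.1.5]{ambrosio2008gfm}; and since $\Lambda_t^k$ charges no mass outside $K$, the tail $\int_{\{\abs{x}>R\}}\abs{x}\,d\Lambda_t^k$ vanishes for $R>\sup_{x\in K}\abs{x}$, so the first moments are uniformly integrable and \cite[Proposition 7.1.5]{ambrosio2008gfm} delivers relative compactness in $\Pone$. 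This is exactly where the hypothesis does all the work, replacing the moment bounds of Lemma~\ref{lemma:Lambdatk.in.CTPone} by the stronger, automatic control coming from compact support.

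Combining (a) and (b) through Ascoli--Arzel\`a shows that $\{\Lambda_\ccdot^k\}_{k\geq 0}$ is relatively compact in $\CTPone$, so up to a subsequence $\Lambda_\ccdot^{k_j}\to\mu_\ccdot$ in $\CTPone$ for some $\mu_\ccdot\in\CTPone$. This convergent subsequence is itself a family of approximate solutions whose mesh parameters still satisfy Assumption~\ref{ass:mesh.param}; hence Theorem~\ref{theo:convergence} applies to it and identifies $\mu_\ccdot$ as a weak solution of problem \eqref{eq:cauchy}.

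The step I expect to be the genuine subtlety is not any of the compactness estimates, which the bounded support trivializes, but rather the gap between subsequential convergence (what Ascoli--Arzel\`a actually provides) and the full convergence literally asserted in the statement. This gap closes automatically if weak solutions are unique; absent a uniqueness result, the honest reading is that every cluster point of $(\Lambda_\ccdot^k)_{k\geq 0}$ is a weak solution, which is the operative content of the corollary. I would therefore phrase the conclusion so that the relative compactness guarantees at least one convergent subsequence and that each such limit, by Theorem~\ref{theo:convergence}, solves \eqref{eq:cauchy}.
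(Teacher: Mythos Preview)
Your proposal is correct and essentially mirrors the paper's own proof: both argue that $\supp{\Lambda_t^k}\subseteq K$ via the convex-combination structure, use the resulting uniform moment bounds together with the Lipschitz estimate \eqref{eq:unif.lip.2} to invoke Ascoli--Arzel\`a, and then appeal to Theorem~\ref{theo:convergence} for the limit. Your remark about the subsequence gap is also on target---the paper's proof likewise concludes only ``up to subsequences,'' so the corollary as stated is to be read in exactly the sense you describe.
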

\begin{proof}
Fix $k\geq 0$, $t\in[0,\,T]$ and let $n$ be such that $t_n^k\leq t\leq t_{n+1}^k$. Let moreover $A\in\B$ be contained in $K^c$, then $\lambda_i^k(A)=0$ for all $i,\,k$ and therefore
\begin{equation*}
	\Lambda_t^k(A)=\left(1-\frac{t-t_n^k}{\Dt_k}\right)\lambda_n^k(A)+\frac{t-t_n^k}{\Dt_k}\lambda_{n+1}^k(A)=0.
\end{equation*}
From the arbitrariness of $k,\,t$ it follows $\supp{\Lambda_t^k}\subseteq K$ for all $k\geq 0$ and all $t\in[0,\,T]$. Since $K$ is bounded, we can find a ball $\ball{R}{0}$ with radius $R>0$ so large that $K\subseteq\ball{R}{0}$. Consequently
\begin{equation*}
	\intRd{\abs{x}^p}{\Lambda_t^k(x)}=\intg{K}{\abs{x}^p}{\Lambda_t^k(x)}\leq R^p<+\infty,
\end{equation*}
\ie, the $\Lambda_t^k$'s have uniformly bounded moments of any order $p\geq 0$. This implies that $\{\Lambda_t^k\}_{k\geq 0}$ is relatively compact in $\Pone$ for all $t\in[0,\,T]$, which, together with the equicontinuity of the family $\{\Lambda_\ccdot^k\}_{k\geq 0}$ entailed by the estimate \eqref{eq:unif.lip.2}, allows us to apply Ascoli-Arzel\`a's Theorem and conclude that the sequence $(\Lambda_\ccdot^k)_{k\geq 0}$ is relatively compact in $\CTPone$. Thus, up to subsequences, it converges to some $\mu_\ccdot\in\CTPone$, which is a weak solution to problem \eqref{eq:cauchy} because of Theorem~\ref{theo:convergence}.
\end{proof}

\begin{remark}[CFL condition]
Let us define $\alpha_k:=V/\beta_k$ for all $k\geq 0$. Then the condition expressed by Assumption~\ref{ass:mesh.param} is equivalent to
\begin{equation}
	V \Dt_k=\alpha_k h_k,
	\label{eq:CFL}
\end{equation}
which implies $\abs{\appr{\gamma}_n^k(x)-x}\leq\alpha_k h_k$ all $x\in\Rd$, \ie, the displacements produced by the mapping $\appr{\gamma}_n^k$ are bounded by the quantity $\alpha_k h_k$. As a consequence, the number of nonempty intersections $\appr{\gamma}_n^k(E_j^k)\cap E_i^k$, $j\in\Zd$, is finite and bounded from above uniformly in $i$, which in particular makes the series in \eq\eqref{eq:numscheme} actually a finite sum for all $i$.

We observe that \eq\eqref{eq:CFL} is a generalization of the Courant-Friedrichs-Lewy (CFL) condition, allowing $\alpha_k\to+\infty$ when $k\to\infty$ to ensure convergence to continuous-in-time-and-space solutions as stated by Theorem~\ref{theo:convergence}. With the time step $\Dt$ frozen, \eq\eqref{eq:CFL} is used in \cite{piccoli2010tem} to prove the stability of the numerical scheme \eqref{eq:numscheme} in approximating the solutions to the \emph{discrete-in-time} model \eqref{eq:pushfwd}.
\end{remark}

\section{Models of crowd and swarm dynamics} \label{sec:models}
In \cite{cristiani2010eai,piccoli2009pfb} a class of models describing the collective dynamics of swarms
and crowds has been introduced. These models are based on the idea that each individual of the group is an \emph{intelligent} (or \emph{active}) agent, able to develop a behavioral strategy for pursuing specific goals. Since agents are not passively dragged by external forces, the Newtonian approach is abandoned in favor of a kinematic one, in which the velocity of the agents stems from few basic behavioral rules. These concepts are formalized in \cite{cristiani2009mso}, where it is shown that the above-cited models can be given a common formulation within the framework of \eq\eqref{eq:cont.eq}. In particular, the corresponding velocity of the agents is
\begin{equation}
	v[\mu_t](x)=\vd(x)+N\intRd{f(\abs{y-x})r(y-x)\smoothind{\neighb_x}(y)}{\mu_t(y)}.
	\label{eq:vel}
\end{equation}

Adopting the terminology of the kinetic theory for active particles, see \cite{bellomo2008mcl}, we call \emph{test agent} an agent potentially concerned with interactions, whose position is described in the above formula by the variable $x$, and \emph{field agents} the agents distributed in space (variable $y$), which the test agent may interact with. In more detail:
\begin{enumerate}
\item $\funct{\vd}{\Rd}{\Rd}$ is the test agent's \emph{desired velocity}, \ie, the velocity of an isolated test agent in the absence of interactions;
\item $\neighb_x$ is the \emph{interaction neighborhood} of the test agent. It conveys the idea that the test agent experiences \emph{nonlocal} interactions with some selected field agents, namely those inside $U_x$. 
\item $\funct{\smoothind{A}}{\Rd}{\R}$ is a cut-off function for the set $A\subseteq\Rd$, such that
\begin{equation*}
	\smoothind{A}(x)=0 \quad \forall\,x\in A^c, \qquad 0\leq\smoothind{A}(x)\leq 1 \quad \forall\,x\in\Rd.
\end{equation*}
In particular, the effect of the term $\smoothind{\neighb_x}$ in \eq\eqref{eq:vel} is to rule out interactions of the test agent with field agents outside $\neighb_x$;
\item $\funct{r}{\Rd}{\Rd}$ is the \emph{direction of the interaction}, depending on the relative position of the interacting agents and oriented in such a way that $r(y-x)\cdot(y-x)\geq 0$, with in addition $\abs{r}\leq 1$;
\item $\funct{f}{\Rplus}{\R}$ is the \emph{interaction strength}, which depends on the distance between the interacting agents. If $f<0$ the interaction is \emph{repulsive}, \ie, the test agent tries to avoid local aggregation with the field agents (as it is common in crowds under normal -- \ie, non-panic -- conditions). If $f>0$ the interaction is \emph{attractive}, \ie, the test agent aims at staying close to the surrounding field agents (as in swarms, where group cohesion is advantageous \eg, for food search or predator avoidance).
\end{enumerate}

The shape of $\neighb_x$ depends partly on the criteria used for selecting the field agents to interact with. In general, when $\neighb_x$ is symmetric \wrt $x$, interactions are said to be \emph{isotropic}, as opposed to \emph{anisotropic} when $\neighb_x$ is not symmetric. Notice also that the interaction integral in \eq\eqref{eq:vel} is actually computed \wrt the mass measure $N\mu_t$, because interactions depend on the number of field agents in $\neighb_x$.

Prototypes of the above objects are, among others: $\neighb_x=\ball{R}{x}$ for isotropic interactions, $\neighb_x=S^\alpha_R(x)$ for anisotropic interactions ($S^\alpha_R(x)$ being the sector of $\ball{R}{x}$ with angular width $\alpha\in(0,\,2\pi]$), $\smoothind{\neighb_x}=\ind{\neighb_x}$,
$f(\abs{y-x})=-1/\abs{y-x}$ for repulsive interactions, $f(\abs{y-x})=\abs{y-x}$ for attractive interactions, $r(y-x)=(y-x)/\abs{y-x}$ (\ie, the unit vector in the direction of $y-x$).
Although meaningful from the modeling point of view, some of these choices need to be adapted in order for the velocity field \eqref{eq:vel} to comply with Assumption~\ref{ass:prop-v}. 

Concerning this, let us introduce the vector-valued function $\funct{F}{\Rd}{\Rd}$, \begin{equation*}
	F(x):=f(\abs{x})r(x),
\end{equation*}
so that \eqref{eq:vel} becomes
\begin{equation}
	v[\mu_t](x)=\vd(x)+N\intRd{F(y-x)\smoothind{\neighb_x}(y)}{\mu_t(y)}.
	\label{eq:vel.F}
\end{equation}
Such a form encompasses a broader class of models of coordinated behavior based on \eq\eqref{eq:cont.eq}, for instance those in \cite{canuto2008eaa}. From now on, we will take \eqref{eq:vel.F} as our velocity model.

\begin{assumption}[Properties of $v$ as in \eq\eqref{eq:vel.F}] \label{ass:prop-v.model}
We assume that the velocity field in \eq\eqref{eq:vel.F} satisfies the following properties.
\begin{enumerate}
\item \label{ass:bound-vd} $\vd$ is Lipschitz continuous and bounded in $\Rd$.
\item \label{ass:isometry} There exists $R>0$ such that, for each $x\in\Rd$, the set $\neighb_x\subseteq\ball{R}{x}$ is measurable and isometric to a reference set $\neighb_0\subseteq\ball{R}{0}$.  
\item \label{ass:F} $F$ is Lipschitz continuous in $\ball{R}{0}$.
\item \label{ass:smthind} $\smoothind{A}$ is Lipschitz continuous in $\Rd$ for each measurable $A\subseteq\Rd$.
\end{enumerate}
\end{assumption}

\begin{remark}
Some comments on Assumption~\ref{ass:prop-v.model} are in order.
\begin{enumerate}
\item In the sequel we denote
\begin{equation*}
	\Vd:=\sup_{x\in\Rd}\abs{\vd(x)}<+\infty.
\end{equation*}
\item Saying that $\neighb_x$ is isometric to $\neighb_0$ means that there is a rigid transformation $\funct{\xi_x}{\Rd}{\Rd}$ mapping $\neighb_0$ onto $\neighb_x$: $\xi_x(\neighb_0)=\neighb_x$.
Specifically, $\xi_x$ has the form
\begin{equation}
	\xi_x(z)=\rot_x z+x,
	\label{eq:isometry}
\end{equation}
where $\rot_x\in\R^{d\times d}$ is a rotation matrix possibly depending on the point $x$.
\item Lipschitz continuity of $F$ is required {\em only} in the ball $\ball{R}{0}$, not in the whole space, \ie, the condition
$	\abs{F(z_2)-F(z_1)}\leq\lip{F}\abs{z_2-z_1}$
must hold only for $\abs{z_1},\,\abs{z_2}\leq R$. This implies that $F$ is bounded in $\ball{R}{0}$, that is,
$	\abs{F(z)}\leq\Fmax \quad \text{for\ } \abs{z}\leq R,$
with $\Fmax:=\max\{\abs{F(0)},\,\lip{F}\}(1+R).$
\item The function $\smoothind{A}$ can be thought of as a mollification of $\ind{A}$. In particular, by continuity $\smoothind{A}=0$ on $\partial A$. Because of the isometry between $\neighb_0$ and $\neighb_x$, one can check that the following relation holds:
\begin{equation}
	\smoothind{\neighb_x}(y)=\smoothind{\neighb_0}(\xi_x^{-1}(y)), \quad \forall\,y\in\Rd.
	\label{eq:trans.smoothind}
\end{equation}
\end{enumerate}
\end{remark}

In the next two sections we show that, with both isotropic and anisotropic interactions, Assumption~\ref{ass:prop-v.model} is sufficient to apply our existence and approximation results to the above-mentioned crowd and swarm models. First, we consider the case of a spherical neighborhood, which is the most important prototype for isotropic interactions. Later, we extend the analysis to any bounded neighborhood, including the significant cases of anisotropic interactions.

\subsection{Spherical interaction neighborhood} \label{sec:isotropic}
In case of isotropic interactions we set $\neighb_x=\ball{R}{x}$, and consequently $\neighb_0=\ball{R}{0}$. Since $\ball{R}{x}$ is invariant under rotations, the mapping $\xi_x$ is simply $\xi_x(z)=z+x$.

By the change of variables $z=\xi_x^{-1}(y)$ in \eq\eqref{eq:vel.F}, and recalling furthermore \eq\eqref{eq:trans.smoothind}, we can rewrite the velocity as
\begin{equation*}
	v[\mu_t](x)=\vd(x)+N \intRd{F(z)\smoothind{\ball{R}{0}}(z)}{(\xi_x^{-1}\#\mu_t)(z)},
\end{equation*}
where $\xi_x^{-1}\#\mu_t$ is in $\Pone$ for all $\mu_t\in\Pone$ and all fixed $x$.

We state a preliminary result, which is useful to verify that such a velocity field complies with Assumption~\ref{ass:prop-v}.
\begin{lemma} \label{lemma:lip.F.smoothind}
The function $z \mapsto F(z)\smoothind{\ball{R}{0}}(z)$ is Lipschitz continuous in $\Rd$.
\end{lemma}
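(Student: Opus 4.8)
The plan is to set $G(z):=F(z)\smoothind{\ball{R}{0}}(z)$ and to bound $\abs{G(z_2)-G(z_1)}$ by a case analysis on the positions of $z_1,\,z_2$ relative to the open ball $\ball{R}{0}$. Two facts drive the argument. On the one hand, by Assumption~\ref{ass:prop-v.model}-\eqref{ass:F} the map $F$ is Lipschitz with constant $\lip{F}$ and satisfies $\abs{F(z)}\leq\Fmax$ on $\ball{R}{0}$. On the other hand, the cut-off $\smoothind{\ball{R}{0}}$ is globally Lipschitz (Assumption~\ref{ass:prop-v.model}-\eqref{ass:smthind}), takes values in $[0,\,1]$, and vanishes on $\{\abs{z}\geq R\}$, hence outside the ball and on its boundary. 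The point of working with the open ball is that $F$ is then only ever evaluated where its bound and Lipschitz estimate are available.

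If $z_1,\,z_2\in\ball{R}{0}$, I would split the product in the usual way,
\[
	\abs{G(z_2)-G(z_1)}\leq\smoothind{\ball{R}{0}}(z_2)\abs{F(z_2)-F(z_1)}
		+\abs{F(z_1)}\abs{\smoothind{\ball{R}{0}}(z_2)-\smoothind{\ball{R}{0}}(z_1)},
\]
and bound the leading factors by $1$ and $\Fmax$, obtaining the Lipschitz constant $\lip{F}+\Fmax\lip{\smoothind{\ball{R}{0}}}$. If instead $\abs{z_1},\,\abs{z_2}\geq R$, then $\smoothind{\ball{R}{0}}$ kills both terms, so $G(z_1)=G(z_2)=0$ and there is nothing to prove.

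The only genuine difficulty is the mixed case, say $z_1\in\ball{R}{0}$ and $\abs{z_2}\geq R$, since $F$ is not controlled at $z_2$ and the naive product rule breaks down; this is the step I expect to be the crux. Here $G(z_2)=0$, so it suffices to estimate $\abs{G(z_1)}=\abs{F(z_1)}\,\smoothind{\ball{R}{0}}(z_1)\leq\Fmax\,\smoothind{\ball{R}{0}}(z_1)$. The idea is to interpose a point of the sphere: since $t\mapsto\abs{(1-t)z_1+tz_2}$ runs continuously from $\abs{z_1}<R$ to $\abs{z_2}\geq R$, the intermediate value theorem yields $w$ on the segment $[z_1,\,z_2]$ with $\abs{w}=R$, so that $\smoothind{\ball{R}{0}}(w)=0$. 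Since $w$ lies on the segment we have $\abs{z_1-w}\leq\abs{z_1-z_2}$, and therefore
\[
	\abs{G(z_1)}\leq\Fmax\abs{\smoothind{\ball{R}{0}}(z_1)-\smoothind{\ball{R}{0}}(w)}
		\leq\Fmax\lip{\smoothind{\ball{R}{0}}}\abs{z_1-w}\leq\Fmax\lip{\smoothind{\ball{R}{0}}}\abs{z_1-z_2}.
\]
Collecting the three cases shows that $G$ is Lipschitz on $\Rd$ with constant $\lip{F}+\Fmax\lip{\smoothind{\ball{R}{0}}}$, which is the assertion.
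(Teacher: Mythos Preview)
Your proof is correct and follows essentially the same approach as the paper: a three-case analysis according to whether $z_1,\,z_2$ lie in $\ball{R}{0}$, with the mixed case handled by interpolating a point on $\partial\ball{R}{0}$ along the segment $[z_1,\,z_2]$ and exploiting $\smoothind{\ball{R}{0}}=0$ there. The only cosmetic difference is that in the mixed case the paper re-invokes the full case-one estimate on the pair $(z_1,\,w)$ to get the constant $\lip{F}+\Fmax\lip{\smoothind{\ball{R}{0}}}$, whereas you bound $\abs{G(z_1)}\leq\Fmax\,\smoothind{\ball{R}{0}}(z_1)$ directly and use only the Lipschitz continuity of $\smoothind{\ball{R}{0}}$, obtaining the slightly smaller constant $\Fmax\lip{\smoothind{\ball{R}{0}}}$ for that case; the overall Lipschitz constant is the same.
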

\begin{proof}
To study the expression
\begin{equation*}
	e(z_1,\,z_2):=\abs{F(z_2)\smoothind{\ball{R}{0}}(z_2)-F(z_1)\smoothind{\ball{R}{0}}(z_1)}, \quad
		z_1,\,z_2\in\Rd
\end{equation*}
it is convenient to distinguish three cases.
\begin{enumerate}
\item If $z_1,\,z_2\in\ball{R}{0}$ we have
\begin{align*}
	e(z_1,\,z_2) &\leq \abs{F(z_2)\smoothind{\ball{R}{0}}(z_2)-F(z_2)\smoothind{\ball{R}{0}}(z_1)}+
		\abs{F(z_2)\smoothind{\ball{R}{0}}(z_1)-F(z_1)\smoothind{\ball{R}{0}}(z_1)} \\
	&\leq (\Fmax\lip{\smoothind{\ball{R}{0}}}+\lip{F})\abs{z_2-z_1}.
\end{align*}
\item If $z_1\not\in\ball{R}{0}$ and $z_2\in\ball{R}{0}$ (or vice versa) then $e(z_1,\,z_2)=\abs{F(z_2)\smoothind{\ball{R}{0}}(z_2)}$. Let $z_\theta:=\theta z_1+(1-\theta)z_2$, $\theta\in[0,\,1]$, be a point of the segment connecting $z_1$ to $z_2$ and pick $\bar{\theta}$ such that $z_{\bar{\theta}}\in\partial\ball{R}{0}$. Since $\smoothind{\ball{R}{0}}(z_{\bar{\theta}})=0$, we have
\begin{equation*}
	e(z_1,\,z_2)=\abs{F(z_2)\smoothind{\ball{R}{0}}(z_2)-
		F(z_{\bar{\theta}})\smoothind{\ball{R}{0}}(z_{\bar{\theta}})}\leq
			(\Fmax\lip{\smoothind{\ball{R}{0}}}+\lip{F})\abs{z_2-z_{\bar{\theta}}}.
\end{equation*}
On the other hand, $\abs{z_2-z_{\bar{\theta}}}=\bar{\theta}\abs{z_2-z_1}\leq\abs{z_2-z_1}$.
\item If $z_1,\,z_2\not\in\ball{R}{0}$ then $e(z_1,\,z_2)=0\leq\abs{z_2-z_1}$. \qedhere
\end{enumerate}
\end{proof}

\begin{proposition}[Velocity with isotropic interactions] \label{prop:isotropic}
Let Assumption~\ref{ass:prop-v.model} hold with $\neighb_0=\ball{R}{0}$. Then the velocity field \eqref{eq:vel.F} complies with Assumption~\ref{ass:prop-v}.
\end{proposition}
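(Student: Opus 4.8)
The plan is to verify each of the three properties in Assumption~\ref{ass:prop-v} for the velocity field in its rewritten form
\begin{equation*}
	v[\mu](x)=\vd(x)+N\intRd{F(z)\smoothind{\ball{R}{0}}(z)}{(\xi_x^{-1}\#\mu)(z)}.
\end{equation*}
The linearity property \eqref{item:ass-lin} is the easiest: since $\vd(x)$ is independent of $\mu$ and the integral is linear in the measure, the identity $\xi_x^{-1}\#(\alpha\mu+(1-\alpha)\nu)=\alpha(\xi_x^{-1}\#\mu)+(1-\alpha)(\xi_x^{-1}\#\nu)$ (immediate from the definition \eqref{eq:def.pushfwd} of push forward) gives the claim directly. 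For the uniform boundedness \eqref{item:ass-bound}, I would bound $\abs{\vd(x)}$ by $\Vd$ and estimate the integral using $\abs{F(z)\smoothind{\ball{R}{0}}(z)}\leq\Fmax$ on $\ball{R}{0}$ together with $\smoothind{\ball{R}{0}}=0$ outside the ball; since $\xi_x^{-1}\#\mu$ is a probability measure this yields $\abs{v[\mu](x)}\leq\Vd+N\Fmax=:V$, giving the required constant.

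The substantive part is the Lipschitz continuity \eqref{item:ass-lip}, namely controlling $\abs{v[\nu](y)-v[\mu](x)}$ by $\lip{v}(\abs{y-x}+\wass{\mu}{\nu})$. The plan is to split this difference through the intermediate term $v[\mu](y)$, so that
\begin{equation*}
	\abs{v[\nu](y)-v[\mu](x)}\leq\abs{v[\nu](y)-v[\mu](y)}+\abs{v[\mu](y)-v[\mu](x)}.
\end{equation*}
The first term isolates the dependence on the measure at a fixed point $y$: here $\vd$ cancels, and what remains is $N$ times the difference of integrals of the fixed function $G(z):=F(z)\smoothind{\ball{R}{0}}(z)$ against $\xi_y^{-1}\#\nu$ and $\xi_y^{-1}\#\mu$. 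Since $G$ is Lipschitz in $\Rd$ by Lemma~\ref{lemma:lip.F.smoothind}, I can write this difference as $\int G\,d(\xi_y^{-1}\#\nu-\xi_y^{-1}\#\mu)$ and estimate it by $\lip{G}$ times $\wass{\xi_y^{-1}\#\mu}{\xi_y^{-1}\#\nu}$, using the normalization trick $G/\lip{G}\in\Lip$ exactly as in the proofs of Lemmas~\ref{lemma:Mtk.in.CTPone} and~\ref{lemma:conv.int}. Because $\xi_y^{-1}$ is the isometry $z\mapsto z-y$ (for the spherical case $\rot_y=I$), push-forward by it preserves the Wasserstein distance, so $\wass{\xi_y^{-1}\#\mu}{\xi_y^{-1}\#\nu}=\wass{\mu}{\nu}$, and the first term is bounded by $N\lip{G}\wass{\mu}{\nu}$.

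The second term, $\abs{v[\mu](y)-v[\mu](x)}$, measures the spatial dependence at a fixed measure. Here $\vd$ contributes at most $\lip{\vd}\abs{y-x}$. For the integral term it is cleaner to work in the original variable: the difference is
\begin{equation*}
	N\intRd{[G(w-y)-G(w-x)]}{\mu(w)},
\end{equation*}
and since $\abs{G(w-y)-G(w-x)}\leq\lip{G}\abs{y-x}$ pointwise (extending $G$ Lipschitz on all of $\Rd$, which Lemma~\ref{lemma:lip.F.smoothind} provides), integrating against the probability measure $\mu$ bounds this by $N\lip{G}\abs{y-x}$. Collecting the two terms gives the Lipschitz estimate with $\lip{v}:=\max\{\lip{\vd}+N\lip{G},\,N\lip{G}\}$, which is independent of $x$, $y$, $\mu$, $\nu$ as required. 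The one point needing care is the bookkeeping of the shift in the argument of $G$ when passing between the rewritten form and the original form \eqref{eq:vel.F}; I expect this to be the main, though routine, obstacle, and it is precisely why establishing the global Lipschitz continuity of $G$ in Lemma~\ref{lemma:lip.F.smoothind} (rather than merely on $\ball{R}{0}$) is the load-bearing preliminary step.
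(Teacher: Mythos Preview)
Your proposal is correct and follows essentially the same approach as the paper: both verify the three items of Assumption~\ref{ass:prop-v} directly, using Lemma~\ref{lemma:lip.F.smoothind} as the key ingredient for the Lipschitz estimate and the isometry $\xi_x^{-1}(z)=z-x$ to control the push-forwards. The only cosmetic difference is in the spatial Lipschitz step: the paper stays in the push-forward formulation and bounds $\wass{\xi_{x_1}^{-1}\#\mu}{\xi_{x_2}^{-1}\#\mu}\leq\abs{x_2-x_1}$, whereas you revert to the original integral $N\int[G(w-y)-G(w-x)]\,d\mu(w)$ and estimate pointwise---these are the same computation in different clothing.
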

\begin{proof}
In the sequel, $x\in\Rd$ and $\mu\in\Pone$ are fixed but arbitrary. We verify the items of Assumption~\ref{ass:prop-v} in order.
\begin{enumerate}
\item The field $v$ is uniformly bounded thanks to
\begin{equation*}
	\abs{v[\mu](x)}\leq\Vd+N\intRd{\abs{F(z)\smoothind{\ball{R}{0}}(z)}}{(\xi_x^{-1}\#\mu)(z)}
		\leq\Vd+N\Fmax\mu(\xi_x(\Rd))=\Vd+N\Fmax.
\end{equation*}
\item As for Lipschitz continuity, we check it separately \wrt to $x$ and to $\mu$.
\begin{enumerate}
\item Let $x_1,\,x_2\in\Rd$, then, recalling Lemma~\ref{lemma:lip.F.smoothind},
\begin{align*}
	\abs{v[\mu](x_2)-v[\mu](x_1)} &\leq \abs{\vd(x_2)-\vd(x_1)}+
		N\abs{\intRd{F(z)\smoothind{\ball{R}{0}}(z)}{(\xi_{x_2}^{-1}\#\mu-\xi_{x_1}^{-1}\#\mu)(z)}} \\
	&\leq\lip{\vd}\abs{x_2-x_1}+N\lip{F\smoothind{\ball{R}{0}}}
		\wass{\xi_{x_1}^{-1}\#\mu}{\xi_{x_2}^{-1}\#\mu}.
\end{align*}
In addition,
\begin{align*}
	\wass{\xi_{x_1}^{-1}\#\mu}{\xi_{x_2}^{-1}\#\mu} &= \sup_{\varphi\in\Lip}
		\intRd{(\varphi\circ\xi_{x_2}^{-1}-\varphi\circ\xi_{x_1}^{-1})}{\mu} \\
	&\leq \intRd{\abs{\xi_{x_2}^{-1}(y)-\xi_{x_1}^{-1}(y)}}{\mu(y)}=\abs{x_2-x_1},
\end{align*}
whence the Lipschitz continuity of $x\mapsto v[\mu](x)$.
\item Let now $\mu,\,\nu\in\Pone$, then, invoking again Lemma~\ref{lemma:lip.F.smoothind},
\begin{align*}
	\abs{v[\nu](x)-v[\mu](x)} &= N\abs{
		\intRd{F(z)\smoothind{\ball{R}{0}}(z)}{(\xi_x^{-1}\#\nu-\xi_x^{-1}\#\mu)(z)}} \\
	&\leq N\lip{F\smoothind{\ball{R}{0}}}\wass{\xi_x^{-1}\#\mu}{\xi_x^{-1}\#\nu} \\
	&= N\lip{F\smoothind{\ball{R}{0}}}\sup_{\varphi\in\Lip}\intRd{\varphi\circ\xi_x^{-1}}{(\nu-\mu)} \\
	&= N\lip{F\smoothind{\ball{R}{0}}}\wass{\mu}{\nu},
\end{align*}
since $\xi_x^{-1}\in\Lip$ implies that $\varphi\circ\xi_x^{-1}$ spans the whole space $\Lip$ when $\varphi$ varies in $\Lip$. Hence also $\mu\mapsto v[\mu](x)$ is Lipschitz continuous.
\end{enumerate}
\item Finally we examine the linearity \wrt to the probability for convex combinations. Let $\mu,\,\nu\in\Pone$ and $\alpha\in [0,\,1]$, then
\begin{align*}
	v[\alpha\mu+(1-\alpha)\nu](x) &= \vd(x)+\alpha N\intRd{F(z)\smoothind{\ball{R}{0}}(z)}{(\xi_x^{-1}\#\mu)(z)} \\
	&\phantom{=}+(1-\alpha)N\intRd{F(z)\smoothind{\ball{R}{0}}(z)}{(\xi_x^{-1}\#\nu)(z)}.
\end{align*}
Writing $\vd(x)=\alpha\vd(x)+(1-\alpha)\vd(x)$ and collecting the coefficients $\alpha$ and $1-\alpha$ gives the result. \qedhere
\end{enumerate}
\end{proof}

Owing to Proposition~\ref{prop:isotropic}, we can state that models based on \eq\eqref{eq:cont.eq}, with a velocity field featuring isotropic interactions as in \eq\eqref{eq:vel.F}, see \eg, \cite{piccoli2009pfb}, admit probability measure solutions for any initial distribution of the agents with finite first and second order moments (\eg, an initial distribution with compact support).  Moreover, such solutions can be duly approximated using the numerical scheme \eqref{eq:numscheme}.

\begin{remark}[Unbounded interaction neighborhood]
As a modification to Assumption~\ref{ass:prop-v.model}, we can allow $\neighb_0=U_x=\Rd$ along with the boundedness assumption $F(x)\leq\Fmax$ for all $x\in\Rd$. The above arguments can be promptly adapted to show that also in this case the velocity field \eqref{eq:vel.F} complies with Assumption~\ref{ass:prop-v}. However, it should be noted that, for the applications we have in mind, the physically relevant cases are those in which the interaction neighborhood is bounded.
\end{remark}

\subsection{Bounded interaction neighborhood} \label{sec:anisotropic}
In this section we drop the specific hypothesis that $\neighb_x$ be a ball. We allow it to have a generic shape, with the only requirement of being bounded, \ie, contained in a ball. This encompasses the important case of anisotropic interactions. In such cases, the neighborhood $\neighb_x$ may not be invariant under rotations, therefore we have to consider the full form \eqref{eq:isometry} of the transformation $\xi_x$. Performing again the change of variables $z=\xi_x^{-1}(y)$ in the integral \eqref{eq:vel.F}, the velocity takes now the form
\begin{equation*}
	v[\mu_t](x)=\vd(x)+N\intRd{F(\rot_x z)\smoothind{\neighb_0}(z)}{(\xi_x^{-1}\#\mu_t)(z)}
\end{equation*}
with the rotation matrix explicitly appearing in the argument of the function $F$. To deal with it, for the sake of simplicity we confine ourselves to the two-dimensional case ($d=2$), for then a simple representation of $\rot_x$ is available:
\begin{equation*}
	\rot_x=
	\begin{pmatrix}
		\cos{\rotang_x} & -\sin{\rotang_x} \\
		\sin{\rotang_x} & \cos{\rotang_x}
	\end{pmatrix},
\end{equation*}
$\rotang_x\in[0,\,2\pi)$ being the angle of rotation which determines the local orientation in the point $x$ of the neighborhood of interaction.

The choice of $\rotang_x$ has to do with the way in which the anisotropy of the interactions is modeled. In the models of crowd and swarm dynamics we are considering, $\rotang_x$ is the angle formed by the desired velocity $\vd(x)$ \wrt a fixed reference direction, for instance the horizontal one identified by the unit vector $\ivect$. Assuming for simplicity that $\vd$ has constant unit modulus, this implies
\begin{equation}
	\cos{\rotang_x}=\vd(x)\cdot\ivect, \qquad \sin{\rotang_x}=(\vd(x)\times\ivect)\cdot\kvect.
	\label{eq:cos.sin}
\end{equation}
In the second formula, $\vd(x)$ and $\ivect$ are thought of as embedded into $\R^3$, with $\times$ denoting vector product and $\kvect$ the unit vector orthogonal to the plane of $\vd(x)$ and $\ivect$.

\begin{remark}
More in general, \eq\eqref{eq:cos.sin} holds with $\vd(x)$ replaced by $\vd(x)/\abs{\vd(x)}$, which is a Lipschitz continuous field provided $\abs{\vd}$ is uniformly bounded away from zero.
\end{remark}

The formalism introduced above allows us to prove the following technical fact.
\begin{lemma} \label{lemma:lip.rot}
For all $x_1,\,x_2,\,z\in\R^2$ it results
\begin{equation*}
	\abs{(\rot_{x_2}-\rot_{x_1})z}\leq\sqrt{2}\lip{\vd}\abs{x_2-x_1}\abs{z}.
\end{equation*}
The same holds with $\rot_{x_1}$, $\rot_{x_2}$ replaced by $\rot_{x_1}^{-1}$, $\rot_{x_2}^{-1}$.
\begin{proof}
A straightforward computation shows that
\begin{equation*}
	\abs{(\rot_{x_2}-\rot_{x_1})z}=\sqrt{(\cos{\rotang_{x_2}}-\cos{\rotang_{x_1}})^2+
		(\sin{\rotang_{x_2}}-\sin{\rotang_{x_1}})^2}\abs{z},
\end{equation*}
and the same is true also using inverse matrices. In addition,
\begin{eqnarray*}
	\abs{\cos{\rotang_{x_2}}-\cos{\rotang_{x_1}}} & = & \abs{(\vd(x_2)-\vd(x_1))\cdot\ivect}\leq
		\abs{\vd(x_2)-\vd(x_1)}, \\
	\abs{\sin{\rotang_{x_2}}-\sin{\rotang_{x_1}}} & = & \abs{[(\vd(x_2)-\vd(x_1))\times\ivect]\cdot\kvect}\leq
		\abs{\vd(x_2)-\vd(x_1)},
\end{eqnarray*}
hence the thesis follows from the Lipschitz continuity of $\vd$.
\end{proof}
\end{lemma}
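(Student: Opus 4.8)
The plan is to reduce the whole estimate to the two scalar quantities $a:=\cos\rotang_{x_2}-\cos\rotang_{x_1}$ and $b:=\sin\rotang_{x_2}-\sin\rotang_{x_1}$, and then to bound each of them by the increment of $\vd$. First I would write out the difference of the two rotation matrices explicitly as
\[
	\rot_{x_2}-\rot_{x_1}=\begin{pmatrix} a & -b \\ b & a\end{pmatrix},
\]
and exploit its special ``rotation-plus-scaling'' structure. Applying it to a vector $z=(z_1,z_2)$ and expanding, the cross terms $\pm 2ab\,z_1 z_2$ cancel, so that $\abs{(\rot_{x_2}-\rot_{x_1})z}^2=(a^2+b^2)(z_1^2+z_2^2)=(a^2+b^2)\abs{z}^2$. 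This produces exactly the identity
\[
	\abs{(\rot_{x_2}-\rot_{x_1})z}=\sqrt{a^2+b^2}\,\abs{z},
\]
reducing the lemma to an estimate of $\sqrt{a^2+b^2}$.

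Next I would bound $\abs{a}$ and $\abs{b}$ separately using the representation \eqref{eq:cos.sin}. Since $\cos\rotang_x=\vd(x)\cdot\ivect$, linearity of the inner product gives $a=(\vd(x_2)-\vd(x_1))\cdot\ivect$, and Cauchy--Schwarz together with $\abs{\ivect}=1$ yields $\abs{a}\leq\abs{\vd(x_2)-\vd(x_1)}$. Likewise, from $\sin\rotang_x=(\vd(x)\times\ivect)\cdot\kvect$ I would write $b=[(\vd(x_2)-\vd(x_1))\times\ivect]\cdot\kvect$ and use $\abs{(u\times\ivect)\cdot\kvect}\leq\abs{u\times\ivect}\leq\abs{u}$ (with $\abs{\ivect}=\abs{\kvect}=1$) to obtain $\abs{b}\leq\abs{\vd(x_2)-\vd(x_1)}$. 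Substituting both bounds gives $\sqrt{a^2+b^2}\leq\sqrt{2}\,\abs{\vd(x_2)-\vd(x_1)}$, and the Lipschitz continuity of $\vd$ (Assumption~\ref{ass:prop-v.model}) closes the first inequality, $\abs{(\rot_{x_2}-\rot_{x_1})z}\leq\sqrt{2}\lip{\vd}\abs{x_2-x_1}\abs{z}$.

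For the inverse matrices I would invoke orthogonality: a rotation satisfies $\rot_x^{-1}=\rot_x^{\top}$, so that
\[
	\rot_{x_2}^{-1}-\rot_{x_1}^{-1}=\begin{pmatrix} a & b \\ -b & a\end{pmatrix},
\]
which has the same $a^2+b^2$ structure. Repeating the expansion verbatim gives $\abs{(\rot_{x_2}^{-1}-\rot_{x_1}^{-1})z}=\sqrt{a^2+b^2}\,\abs{z}$ as well, so the identical chain of estimates applies and the second claim follows. I do not expect a genuine obstacle here: the computation is elementary, and the only point requiring a little care is the choice of the vector-product identity $\sin\rotang_x=(\vd(x)\times\ivect)\cdot\kvect$ to \emph{linearize} the sine difference, since it is precisely this that makes the Cauchy--Schwarz bound on $\abs{b}$ available in the same clean form as the bound on $\abs{a}$.
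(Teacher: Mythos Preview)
Your proposal is correct and follows essentially the same argument as the paper: both reduce the matrix norm to $\sqrt{a^2+b^2}\,\abs{z}$ with $a=\cos\rotang_{x_2}-\cos\rotang_{x_1}$, $b=\sin\rotang_{x_2}-\sin\rotang_{x_1}$, bound $\abs{a}$ and $\abs{b}$ separately via \eqref{eq:cos.sin} and Cauchy--Schwarz, and conclude by Lipschitz continuity of $\vd$. Your version merely spells out explicitly the matrix computation and the transpose argument for the inverses that the paper leaves as ``straightforward''.
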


With Lemma~\ref{lemma:lip.rot} we are in a position to prove that the velocity \eqref{eq:vel.F} complies with Assumption~\ref{ass:prop-v} also in case of anisotropic interactions.
\begin{proposition}[Velocity with anisotropic interactions] \label{prop:anisotropic}
Let Assumption~\ref{ass:prop-v.model} hold and assume $d=2$. Then the velocity field \eqref{eq:vel.F} complies with Assumption~\ref{ass:prop-v}.
\end{proposition}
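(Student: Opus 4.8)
The plan is to verify the three items of Assumption~\ref{ass:prop-v} for the velocity~\eqref{eq:vel.F} with the general bounded neighborhood, following the blueprint of Proposition~\ref{prop:isotropic} but paying attention to the fact that the rotation $\rot_x$ now depends on the point $x$. Uniform boundedness and linearity require no new idea: since $\rot_x$ is an isometry, on the support $\abs{z}\leq R$ one has $\abs{F(\rot_x z)}\leq\Fmax$, whence $\abs{v[\mu](x)}\leq\Vd+N\Fmax\,\mu(\neighb_x)\leq\Vd+N\Fmax$; and linearity \wrt $\mu$ for convex combinations is immediate from linearity of the integral together with the splitting $\vd=\alpha\vd+(1-\alpha)\vd$. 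The substance of the proof is therefore Assumption~\ref{ass:prop-v}-\eqref{item:ass-lip}.

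The first preparatory step is a version of Lemma~\ref{lemma:lip.F.smoothind} adapted to the present setting: I would show that, for every $x$, the map $z\mapsto F(\rot_x z)\smoothind{\neighb_0}(z)$ --- equivalently, through~\eqref{eq:trans.smoothind}, the map $\Phi_x(y):=F(y-x)\smoothind{\neighb_x}(y)$ --- is globally Lipschitz with a constant $\bar{L}:=\lip{F}+\Fmax\lip{\smoothind{\neighb_0}}$ that does \emph{not} depend on $x$. The argument is the three-case, boundary-crossing argument of Lemma~\ref{lemma:lip.F.smoothind}: on $\ball{R}{0}$ both factors are Lipschitz and bounded (the rotation being an isometry leaves $\lip{F}$ and $\Fmax$ unchanged, and $\smoothind{\neighb_0}$ is Lipschitz by Assumption~\ref{ass:prop-v.model}-\eqref{ass:smthind}); outside the support of the cut-off the product vanishes; and across the boundary one uses that $\smoothind{\neighb_0}$ vanishes on $\partial\neighb_0$ together with $\neighb_0\subseteq\ball{R}{0}$. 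Granting this, Lipschitz continuity \wrt $\mu$ follows at once, uniformly in $x$: for fixed $x$,
\begin{equation*}
	\abs{v[\nu](x)-v[\mu](x)}=N\abs{\intRd{\Phi_x}{(\nu-\mu)}}\leq N\bar{L}\,\wass{\mu}{\nu},
\end{equation*}
since $\Phi_x/\bar{L}\in\Lip$.

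The genuinely delicate point is Lipschitz continuity \wrt $x$. The naive route --- writing $v[\mu](x_j)$ in the pushed-forward form and estimating $\wass{\xi_{x_1}^{-1}\#\mu}{\xi_{x_2}^{-1}\#\mu}$ --- fails, because that distance is controlled only by $\abs{x_2-x_1}\,(1+\sqrt{2}\lip{\vd}\intRd{\abs{y-x_1}}{\mu(y)})$, i.e.\ by a \emph{first moment} of $\mu$, which would make the Lipschitz constant depend on the measure, in violation of Assumption~\ref{ass:prop-v}-\eqref{item:ass-lip}. My plan is instead to reduce the whole estimate to a \emph{uniform pointwise} bound on the integrand, namely $\infnorm{\Phi_{x_2}-\Phi_{x_1}}\leq C\abs{x_2-x_1}$ with $C$ independent of $\mu$. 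Once this is in hand, integrating against the probability measure $\mu$ gives $N\abs{\intRd{(\Phi_{x_2}-\Phi_{x_1})}{\mu}}\leq NC\abs{x_2-x_1}$, which together with $\abs{\vd(x_2)-\vd(x_1)}\leq\lip{\vd}\abs{x_2-x_1}$ yields the claim.

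To obtain the pointwise bound I would use a translation trick that isolates the $x$-dependence carried by the rotated cut-off from that carried by $F$. Setting $\tau:=x_2-x_1$, I compare $\Phi_{x_1}(y)$ with the shifted function $\Phi_{x_2}(y+\tau)$: both share the same factor $F(y-x_1)$, so their difference reduces to $F(y-x_1)[\smoothind{\neighb_0}(\rot_{x_1}^{-1}(y-x_1))-\smoothind{\neighb_0}(\rot_{x_2}^{-1}(y-x_1))]$, which on the support $\abs{y-x_1}\leq R$ is bounded, via the Lipschitz continuity of $\smoothind{\neighb_0}$ and Lemma~\ref{lemma:lip.rot}, by $\sqrt{2}\,\Fmax\lip{\smoothind{\neighb_0}}\lip{\vd}R\,\abs{x_2-x_1}$ --- here the dangerous factor $\abs{y-x_1}$ is harmless because it is capped by $R$ on the support. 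Shifting back by means of the uniform Lipschitz continuity of $\Phi_{x_2}$ established above, $\infnorm{\Phi_{x_2}(\cdot+\tau)-\Phi_{x_2}}\leq\bar{L}\abs{\tau}$, the triangle inequality delivers the pointwise bound with $C=\bar{L}+\sqrt{2}\,\Fmax\lip{\smoothind{\neighb_0}}\lip{\vd}R$. This is the step where the restriction $d=2$ is used, precisely because Lemma~\ref{lemma:lip.rot} quantifies $\abs{(\rot_{x_2}^{-1}-\rot_{x_1}^{-1})z}$ through the explicit two-dimensional rotation. I expect this reduction to a measure-free $L^\infty$ estimate to be the crux; everything else parallels the isotropic case.
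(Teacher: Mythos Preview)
Your proposal is correct and takes a genuinely different route from the paper for the Lipschitz-in-$x$ estimate, which is indeed the only nontrivial part. Both arguments share the preparatory observation that $z\mapsto F(\rot_x z)\smoothind{\neighb_0}(z)$ is globally Lipschitz with a constant independent of $x$, and both handle boundedness, linearity and Lipschitz continuity in $\mu$ identically. For the $x$-Lipschitz bound, however, the paper splits $v[\mu](x_2)-v[\mu](x_1)$ into two integrals: one carrying the difference $F(\rot_{x_2}z)-F(\rot_{x_1}z)$ (handled directly by Lemma~\ref{lemma:lip.rot} on the support $\abs{z}\leq R$), and one carrying the difference of pushed-forward measures $\xi_{x_2}^{-1}\#\mu-\xi_{x_1}^{-1}\#\mu$. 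The latter produces the troublesome factor $\abs{y-x_1}$, and the paper tames it by a case distinction: if $\abs{x_2-x_1}>2R$ the neighborhoods are disjoint and a crude bound plus the ratio trick $1<\abs{x_2-x_1}/(2R)$ suffices; if $\abs{x_2-x_1}\leq 2R$ then $\abs{y-x_1}\leq 3R$ on $\neighb_{x_1}\cup\neighb_{x_2}$. Your translation trick---comparing $\Phi_{x_1}(y)$ with $\Phi_{x_2}(y+\tau)$ so that the common factor $F(y-x_1)$ drops out and the rotation discrepancy is confined to the support $\abs{y-x_1}\leq R$---bypasses this case split entirely and yields the stronger pointwise bound $\infnorm{\Phi_{x_2}-\Phi_{x_1}}\leq C\abs{x_2-x_1}$ in one stroke. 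Your argument is shorter and gives a slightly better constant; the paper's decomposition, on the other hand, stays closer to the Wasserstein formulation of the isotropic proof and makes the separate contributions of rotation and translation more visible.
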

\begin{proof}
\begin{enumerate}
\item Uniform boundedness, Lipschitz continuity \wrt the probability, and linearity \wrt to the probability for convex combinations follow straightforwardly from calculations entirely analogous to those performed in the proof of Proposition~\ref{prop:isotropic}. In fact, it is sufficient to observe that, $\rot_x$ being an isometry, the function $F(\rot_x\cdot)$ is Lipschitz continuous and bounded in $\ball{R}{0}$ with $\lip{F(\rot_x\cdot)}=\lip{F}$ for all $x\in\Rd$. Moreover, by the same argument as in the proof of Lemma~\ref{lemma:lip.F.smoothind}, the function $F(\rot_x\cdot)\smoothind{\neighb_0}(\cdot)$ is Lipschitz continuous in $\Rd$ with the same Lipschitz constant as $F\smoothind{\neighb_0}$, thus in particular independent of $x$.

\item Lipschitz continuity \wrt to $x$ is instead more delicate, because it involves directly the rotation matrix $\rot_x$. Let $x_1,\,x_2\in\R^2$ and fix $\mu\in\PP_1(\R^2)$, then
\begin{align}
	\abs{v[\mu](x_2)-v[\mu](x_1)} &\leq \abs{\vd(x_2)-\vd(x_1)} \notag \\
	&\phantom{\leq} +N\abs{\intRd{F(\rot_{x_2}z)\smoothind{\neighb_0}(z)}{(\xi_{x_2}^{-1}\#\mu)(z)}
		-\intRd{F(\rot_{x_1}z)\smoothind{\neighb_0}(z)}{(\xi_{x_1}^{-1}\#\mu)(z)}} \notag \\
	&\leq\lip{\vd}\abs{x_2-x_1}+
		N\intRd{\abs{F(\rot_{x_2}z)-F(\rot_{x_1}z)}\smoothind{\neighb_0}(z)}{(\xi_{x_2}^{-1}\#\mu)(z)} \notag \\
	&\phantom{\leq}+N\abs{
		\intRd{F(\rot_{x_1}z)\smoothind{\neighb_0}(z)}{(\xi_{x_2}^{-1}\#\mu-\xi_{x_1}^{-1}\#\mu)(z)}}.
			\label{eq:rhs-integrals}
\end{align}
In the first integral at the right-hand side of \eqref{eq:rhs-integrals} we can assume $z\in\neighb_0$, for otherwise $\smoothind{\neighb_0}(z)=0$. Hence $\abs{z}\leq R$ and moreover $\rot_x z\in\ball{R}{0}$ for all $x\in\R^2$. Lipschitz continuity of $F$ in that ball, along with Lemma~\ref{lemma:lip.rot}, implies
\begin{equation*}
	\abs{F(\rot_{x_2}z)-F(\rot_{x_1}z)}\leq\lip{F}\abs{(\rot_{x_1}-\rot_{x_2})z}\leq
		\sqrt{2}\lip{F}\lip{\vd}R\abs{x_2-x_1},
\end{equation*}
so that finally
\begin{align}
	N\intRd{\abs{F(\rot_{x_2}z)-F(\rot_{x_1}z)}\smoothind{\neighb_0}(z)}{(\xi_{x_2}^{-1}\#\mu)(z)} &\leq
		N\intg{\neighb_0}{\abs{F(\rot_{x_2}z)-F(\rot_{x_1}z)}}{(\xi_{x_2}^{-1}\#\mu)(z)} \notag \\
	&\leq N\sqrt{2}\lip{F}\lip{\vd}R\abs{x_2-x_1},
		\label{eq:anisotropic.rhs.1int}
\end{align}
having observed that $(\xi_{x_2}^{-1}\#\mu)(\neighb_0)=\mu(\neighb_{x_2})\leq 1$.

As far as the second integral at the right-hand side of \eqref{eq:rhs-integrals} is concerned, we have
\begin{align}
	&N\abs{\intRd{F(\rot_{x_1}z)\smoothind{\neighb_0}(z)}{(\xi_{x_2}^{-1}\#\mu-\xi_{x_1}^{-1}\#\mu)(z)}} \notag \\
	& \quad \leq N\intRd{\abs{F(\rot_{x_1}\xi_{x_2}^{-1}(y))\smoothind{\neighb_0}(\xi_{x_2}^{-1}(y))
		-F(\rot_{x_1}\xi_{x_1}^{-1}(y))\smoothind{\neighb_0}(\xi_{x_1}^{-1}(y))}}{\mu(y)} \notag \\
	& \quad = N\intg{\neighb_{x_1}\cup\neighb_{x_2}}
		{\abs{F(\rot_{x_1}\xi_{x_2}^{-1}(y))\smoothind{\neighb_0}(\xi_{x_2}^{-1}(y))
			-F(\rot_{x_1}\xi_{x_1}^{-1}(y))\smoothind{\neighb_0}(\xi_{x_1}^{-1}(y))}}{\mu(y)}.
			\label{eq:anisotropic.rhs.2int}
\end{align}
Notice that we can confine ourselves to $y\in\neighb_{x_1}\cup\neighb_{x_2}$, for otherwise $\smoothind{\neighb_0}(\xi_{x_j}^{-1}(y))=\smoothind{\neighb_{x_j}}(y)=0$ for both $j=1,\,2$. We distinguish two cases.
\begin{enumerate}
\item $\abs{x_2-x_1}>2R$. \\ In this case $\neighb_{x_1}\cap\neighb_{x_2}=\emptyset$ because the balls $\ball{R}{x_1}$, $\ball{R}{x_2}$ are disjoint. Thus:
\begin{align*}
	\eqref{eq:anisotropic.rhs.2int} &=
		N\intg{\neighb_{x_1}}{\abs{F(\rot_{x_1}\xi_{x_1}^{-1}(y))}\smoothind{\neighb_{x_1}}(y)}{\mu(y)}+
		N\intg{\neighb_{x_2}}{\abs{F(\rot_{x_1}\xi_{x_2}^{-1}(y))}\smoothind{\neighb_{x_2}}(y)}{\mu(y)}.
\intertext{For all $y\in\neighb_{x_j}$, $j=1,\,2$, it results $\xi_{x_j}^{-1}(y)\in\neighb_0\subset\ball{R}{0}$, hence $\rot_{x_1}\xi_{x_j}^{-1}(y)\in\ball{R}{0}$ and we can use the boundedness of $F$ in that ball to get}
	&\leq N\Fmax\left(\intg{\neighb_{x_1}}{\smoothind{\neighb_{x_1}}(y)}{\mu(y)}+
		\intg{\neighb_{x_2}}{\smoothind{\neighb_{x_2}}(y)}{\mu(y)}\right)\leq
			N\Fmax\mu(\neighb_{x_1}\cup\neighb_{x_2})\leq N\Fmax.
\end{align*}
But $1<\frac{\abs{x_2-x_1}}{2R}$, therefore we conclude
\begin{equation}
	\eqref{eq:anisotropic.rhs.2int}\leq\frac{N\Fmax}{2R}\abs{x_2-x_1}.
	\label{eq:anisotropic.rhs.2int.1case}
\end{equation}
\item $\abs{x_2-x_1}\leq 2R$. \\ In this case the neighborhoods $\neighb_{x_1}$, $\neighb_{x_2}$ need not be disjoint, however we can resort to the Lipschitz continuity of the function $F(\rot_x\cdot)\smoothind{\neighb_0}(\cdot)$:
\begin{align*}
	\eqref{eq:anisotropic.rhs.2int} &\leq
		\lip{F\smoothind{\neighb_0}}N
			\intg{\neighb_{x_1}\cup\neighb_{x_2}}{\abs{\xi_{x_2}^{-1}(y)-\xi_{x_1}^{-1}(y)}}{\mu(y)} \\
	&= \lip{F\smoothind{\neighb_0}}N
			\intg{\neighb_{x_1}\cup\neighb_{x_2}}{\abs{\rot_{x_2}^{-1}(y-x_2)-\rot_{x_1}^{-1}(y-x_1)}}{\mu(y)} \\
	&= \lip{F\smoothind{\neighb_0}}N
			\intg{\neighb_{x_1}\cup\neighb_{x_2}}{\abs{(\rot_{x_2}^{-1}-\rot_{x_1}^{-1})(y-x_1)
				+\rot_{x_2}^{-1}(x_2-x_1)}}{\mu(y)}
\intertext{and further, thanks to Lemma~\ref{lemma:lip.rot} and to the fact that $\rot_{x_2}^{-1}$ is an isometry,}
	&\leq \lip{F\smoothind{\neighb_0}}N\left(
			\sqrt{2}\lip{\vd}\intg{\neighb_{x_1}\cup\neighb_{x_2}}{\abs{y-x_1}}{\mu(y)}
				+\mu(\neighb_{x_1}\cup\neighb_{x_2})\right)\abs{x_2-x_1}.
\end{align*}
Let us examine the term with the integral. If $y\in\neighb_{x_1}$ then $\abs{y-x_1}\leq R$ whereas if $y\in\neighb_{x_2}$ then $\abs{y-x_1}\leq\abs{y-x_2}+\abs{x_2-x_1}\leq 3R$. Finally, $\abs{y-x_1}\leq 3R$ for all $y\in\neighb_{x_1}\cup\neighb_{x_2}$, which says
\begin{equation}
	\eqref{eq:anisotropic.rhs.2int}\leq
		\lip{F\smoothind{\neighb_0}}N(3\sqrt{2}\lip{\vd}R+1)\abs{x_2-x_1}.
	\label{eq:anisotropic.rhs.2int.2case}		
\end{equation}
\end{enumerate}

In conclusion, from \eqref{eq:anisotropic.rhs.2int.1case} and \eqref{eq:anisotropic.rhs.2int.2case} we deduce that there exists a constant $C>0$ such that $\eqref{eq:anisotropic.rhs.2int}\leq C\abs{x_2-x_1}$ for all $x_1,\,x_2\in\R^2$. This, together with the estimate \eqref{eq:anisotropic.rhs.1int}, completes the proof of Lipschitz continuity of the mapping $x\mapsto v[\mu](x)$. \qedhere
\end{enumerate}
\end{proof}

In view of Proposition~\ref{prop:anisotropic} we conclude that two-dimensional models based on the velocity \eqref{eq:vel.F} with anisotropic interactions have probability measure solutions, which can be approximated arbitrarily well using the scheme \eqref{eq:numscheme} on finer and finer numerical grids. Notice that, as far as \eg, crowd dynamics is concerned, two-dimensional problems are enough for applications.

\begin{remark}[Higher dimension]
For $d>2$ additional technicalities arise, due to a more complex structure of the rotation matrix. Nevertheless, in the special case that the desired velocity is constant in $x$, it is straightforward to extend the results to any spatial dimension. In fact, the rotation matrix being independent of $x$, Proposition~\ref{prop:anisotropic} can be proved without using Lemma~\ref{lemma:lip.rot}, which is the only point where we use the explicit representation of the matrix. Models with constant desired velocity have been recently proposed for swarm dynamics problems \cite{cristiani2010eai} and for \emph{rendez-vous} algorithms \cite{canuto2008eaa}.
\end{remark}

\begin{remark}[Zero desired velocity]
When interactions are anisotropic and the desired velocity is zero \cite{cristiani2010eai}, the orientation of the neighborhood of interaction cannot be defined by \eq\eqref{eq:cos.sin}. However, this issue can be solved by replacing $\vd$ in \eqref{eq:cos.sin} with any other Lipschitz continuous unit vector field, \eg, a nonzero constant one, with the only purpose of defining a rotation angle. Clearly, this problem does not arise if the desired velocity is zero but interactions are isotropic, as in \cite{canuto2008eaa}.
\end{remark}

\section{Case study: discrete models} \label{sec:case.study}
In this last section we put the theory into operation by giving an example of explicit solution to the Cauchy problem \eqref{eq:cauchy}. Furthermore, we visualize the convergence to such solution of the approximations produced by the numerical scheme discussed in Section~\ref{sec:approx}.

For illustrative purposes, we consider the simple case of a system of agents featuring isotropic interactions with zero desired velocity:
\begin{equation}
	v[\mu_t](x)=N\intRd{F(y-x)\smoothind{\ball{R}{x}}(y)}{\mu_t(y)}.
	\label{eq:case-study.vel}
\end{equation}
Moreover, we prescribe as initial condition the discrete probability measure
\begin{equation}
	\incond{\mu}=\frac{1}{N}\sum_{l=1}^{N}\delta_{x^l_0},
	\label{eq:case-study.incond}
\end{equation}
where $\delta_{x^l_0}$ is the Dirac mass concentrated in $x^l_0$ (\ie, for any $A\in\B$ it results $\delta_{x^l_0}(A)=1$ if $x^l_0\in A$, $\delta_{x^l_0}(A)=0$ otherwise) and $x^1_0,\,\dots,\,x^N_0$ are $N$ selected points in $\Rd$.

We recall that $\incond{\mu}$ is the common law of the random variables $X^i_0$, $i=1,\,\dots,\,N$, expressing the initial positions of the agents. The structure \eqref{eq:case-study.incond} of $\incond{\mu}$ implies that each $X^i_0$ is a random variable taking almost surely the values $x^1_0,\,\dots,\,x^N_0$, each with probability $1/N$. Therefore we are considering a discrete model of a group of indistinguishable agents initially concentrated in $x^1_0,\,\dots,\,x^N_0$. The indistinguishability is reflected by the fact that any of the points $x^l_0$ can be, with equal probability, the initial position of the generic $i$-th agent.

We find a solution to the Cauchy problem \eqref{eq:cauchy} with initial condition \eqref{eq:case-study.incond} by the method of the characteristics (cf. Section~\ref{sec:cauchy}). In particular, we know from \eq\eqref{eq:rep.formula} that $\mu_t=\gamma_t\#\incond{\mu}$, where $\gamma_t$ is the flow map. From the linearity of the push forward we first deduce $\mu_t=\frac{1}{N}\sum_{l=1}^{N}\gamma_t\#\delta_{x^l_0}$, then we observe that for any measurable set $A$ it results
\begin{equation*}
	(\gamma_t\#\delta_{x^l_0})(A)=\delta_{x^l_0}(\gamma_t^{-1}(A))=
		\begin{cases}
			1 & \text{if\ } x^l_0\in\gamma_t^{-1}(A) \Leftrightarrow \gamma_t(x^l_0)\in A \\
			0 & \text{otherwise.}
		\end{cases}
\end{equation*}
Hence $\gamma_t\#\delta_{x^l_0}=\delta_{\gamma_t(x^l_0)}$ and we can write the solution as
\begin{equation}
	\mu_t=\frac{1}{N}\sum_{l=1}^{N}\delta_{\gamma_t(x^l_0)}.
	\label{eq:case-study.solution}
\end{equation}

Recalling that $\mu_t$ is the  law of the random variables $\condexp{X}^i_t=\Exp{X^i_t\vert X^i_0}$, $i=1,\,\dots,\,N$, from \eq\eqref{eq:case-study.solution} we infer that each $\condexp{X}^i_t$ takes almost surely the values $\gamma_t(x^1_0),\,\dots,\,\gamma_t(x^N_0)$, each with probability $1/N$. Therefore, at every time $t>0$ the distribution of the group of agents is concentrated on the discrete set of points $\gamma_t(x^1_0),\,\dots,\,\gamma_t(x^N_0)$. Notice that again we cannot associate deterministically a given agent with its position because of the indistinguishability of the agents. However, we can describe the trajectories of the agents by means of the mappings $t\mapsto\gamma_t(x^l_0)$, $l=1,\,\dots,\,N$.

The flow map is defined by \eq\eqref{eq:gammat}, which with the velocity \eqref{eq:case-study.vel} and the initial condition \eqref{eq:case-study.incond} yields
\begin{equation}
	\begin{cases}
		\dfrac{\partial\gamma_t(x)}{\partial t}=
			\ds\sum_{j=1}^{N}F(\gamma_t(x^j_0)-\gamma_t(x))\smoothind{\ball{R}{\gamma_t(x)}}(\gamma_t(x^j_0)) \\
			\gamma_0(x)=x
	\end{cases}
	\label{eq:case-study.gamma}
\end{equation}
for all $x\in\Rd$. The discrete structure \eqref{eq:case-study.solution} of $\mu_t$ makes it actually sufficient to solve problem \eqref{eq:case-study.gamma} for $x=x^l_0$. Setting $x_l(t):=\gamma_t(x^l_0)$ and computing \eq\eqref{eq:case-study.gamma} for $x=x^l_0$ we find the initial-value problem
\begin{equation}
	\begin{cases}
		\dot{x}_l=\ds\sum_{j=1}^{N}F(x_j-x_l)\smoothind{\ball{R}{x_l}}(x_j) \\[4mm]
		x_l(0)=x^l_0
	\end{cases}
	\qquad (l=1,\,\dots,\,N),
	\label{eq:case-study.ode}
\end{equation}
thus we conclude that constructing the measure \eqref{eq:case-study.solution} amounts to solving the system of ODEs \eqref{eq:case-study.ode}, whose solutions are the positions of the agents at $t>0$.

From the numerical point of view, we can either approximate the measure \eqref{eq:case-study.solution} by using the scheme \eqref{eq:numscheme} or integrate problem \eqref{eq:case-study.ode} via a standard numerical method for ODEs. The remaining part of this section is devoted to show the convergence of the approximations obtained from \eqref{eq:numscheme} to the numerical solutions of \eqref{eq:case-study.ode} in a toy model. Let us consider a one-dimensional problem ($d=1$) with $N=10$ agents, whose initial positions are sampled from a uniform distribution on the interval $[0,\,1]$. Agents repel each other according to the following repulsion function:
\begin{equation*}
	F(z)=-\frac{az}{\max^2\{\abs{z},\,\eps\}} \qquad (a,\,\eps>0)
\end{equation*}
which is from the product of
\begin{equation*}
	f(z)=-\frac{a}{\max\{\abs{z},\,\eps\}}, \qquad r(z)=\frac{z}{\max\{\abs{z},\,\eps\}}.
\end{equation*}
The repulsion strength $f$ is inversely proportional to the distance between the interacting agents (up to a minimal threshold $\eps$) and the direction of the interaction $r$ is a Lipschitz mollification of the unit vector $z/\abs{z}$. The reference interaction neighborhood is $\ball{R}{0}=(-R,\,R)$ with cut-off function
\begin{equation*}
	\smoothind{\ball{R}{0}}(z)=e^{-\frac{b\abs{z}^2}{R^2-\abs{z}^2}}\ind{\ball{R}{0}}(z) \qquad (b>0),
\end{equation*}
which is a $C^\infty$ mollification of the indicator function of $\ball{R}{0}$. Parameters are set to $R=0.1$, $a=0.01$, $b=0.02$, $\eps=R/4$. The computational time is $T=0.1$.

Simulations of the ODE system \eqref{eq:case-study.ode} and of problem \eqref{eq:cauchy} with initial condition \eqref{eq:case-study.incond} were run independently and their results visualized on the same graphs of Fig.~\ref{fig:simulation} for duly comparison. In particular, the ODE system was numerically integrated using an explicit-in-time Euler scheme, whereas the conservation law for the probability $\mu_t$ was solved through the scheme \eqref{eq:numscheme} on different meshes, choosing
\begin{equation*}
	\Dt_k=\left(\frac{h_k}{V}\right)^\delta \qquad (0<\delta<1).
\end{equation*}
Note that this entails $\beta_k\sim h_k^{1-\delta}$ in Assumption~\ref{ass:mesh.param} and $\alpha_k\sim h_k^{\delta-1}$ in \eq\eqref{eq:CFL}. Figure~\ref{fig:simulation} displays the numerical solution computed with $\delta=0.9$ and $h_k=1/k$ in the three cases $k=10^2,\,10^3,\,10^4$, at two different time instants. Convergence toward the exact solution \eqref{eq:case-study.solution} as the mesh refines can be visually appreciated, although approximating singular measures with densities is a difficult task, which requires very fine and computationally expensive meshes to get accurate results. Therefore, when such a structure of the solution is numerically sought, it is more efficient to exploit the stated equivalence of the original problem with the discrete system of ODEs.

\begin{figure}[t]
\includegraphics[width=\textwidth]{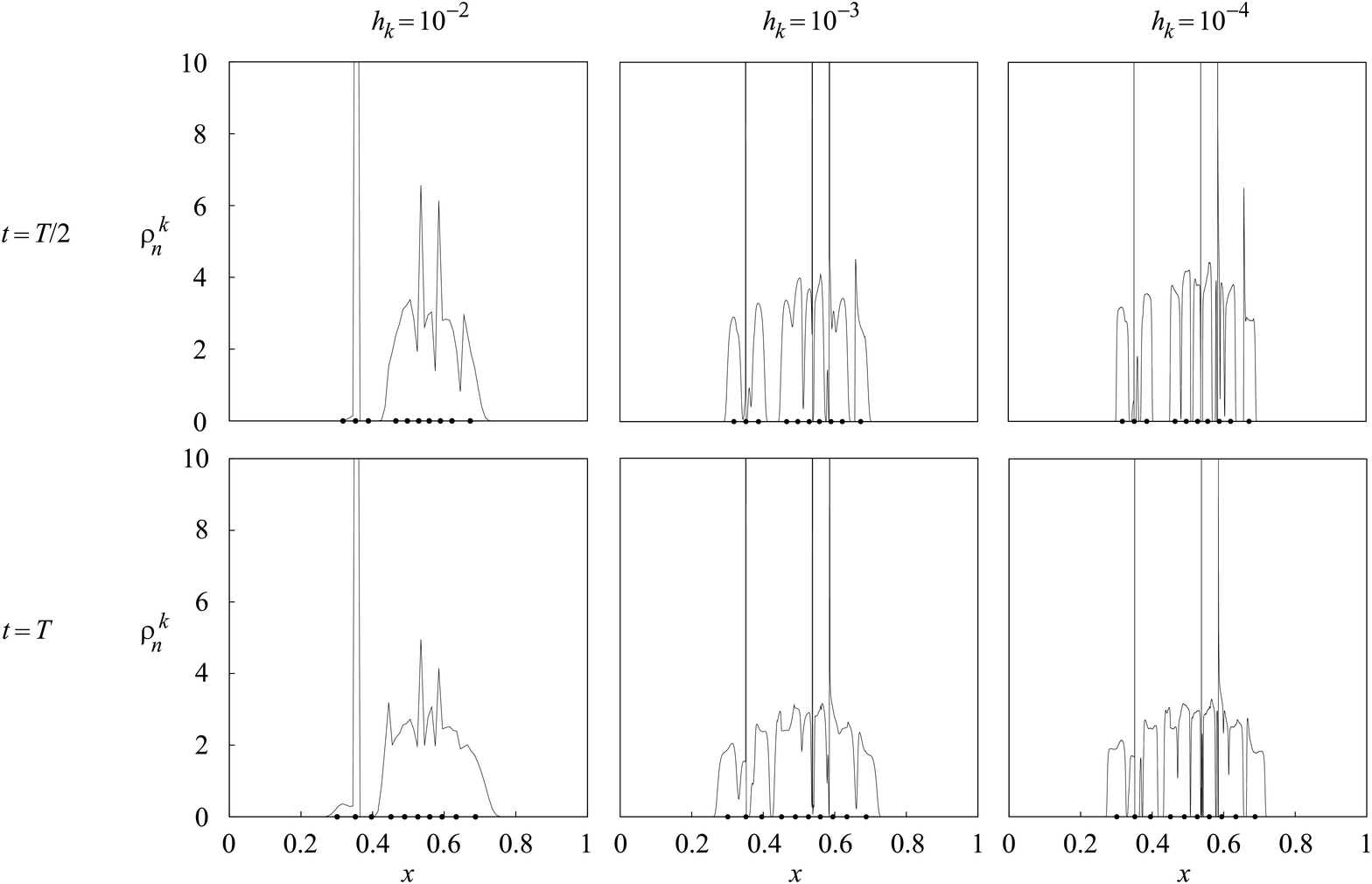}
\caption{Numerical solution to \eq\eqref{eq:cont.eq} with discrete initial datum \eqref{eq:case-study.incond} at two successive times. Bullets are the solution of the ODE system \eqref{eq:case-study.ode} computed by the explicit Euler scheme. The continuous curve is the solution computed by the numerical scheme \eqref{eq:numscheme} on meshes with different level of refinement.}
\label{fig:simulation}
\end{figure}

\section*{Acknowledgments}
A. Tosin was funded by a post-doctoral research scholarship ``Compagnia di San Paolo'' awarded by the National Institute for Advanced Mathematics ``F. Severi'' (INdAM, Italy).

\bibliographystyle{plain}
\bibliography{TaFp-wasserstein}

\begin{thebibliography}{1}

\bibitem{ambrosio2008gfm}
L.~Ambrosio, N.~Gigli, and G.~Savar{\'e}.
\newblock {\em Gradient flows in metric spaces and in the space of probability
  measures}.
\newblock Lectures in Mathematics ETH Z\"urich. Birkh\"auser Verlag, Basel,
  second edition, 2008.

\bibitem{bellomo2008mcl}
N.~Bellomo.
\newblock {\em Modeling complex living systems -- A kinetic theory and
  stochastic game approach}.
\newblock Modeling and Simulation in Science, Engineering and Technology.
  Birkh\"auser, Boston, 2008.

\bibitem{canizo2011wpt}
J.~A. Ca\~{n}izo, J.~A. Carrillo, and J.~Rosado.
\newblock A well-posedness theory in measures for some kinetic models of
  collective motion.
\newblock {\em Math. Models Methods Appl. Sci.}
\newblock To appear. Preprint available from $<$arXiv:0907.3901$>$.

\bibitem{canuto2008eaa}
C.~Canuto, F.~Fagnani, and P.~Tilli.
\newblock A {E}ulerian approach to the analysis of rendez-vous algorithms.
\newblock In {\em Proceedings of the 17th IFAC World Congress (IFAC'08)}, pages
  9039--9044, Seoul, Korea, July 2008. The International Federation of
  Automatic Control, IFAC World Congress.

\bibitem{cristiani2010eai}
E.~Cristiani, P.~Frasca, and B.~Piccoli.
\newblock Effects of anisotropic interactions on the structure of animal
  groups.
\newblock {\em J. Math. Biol.}, 2010.
\newblock Online First (doi: 10.1007/s00285-010-0347-7).

\bibitem{cristiani2009mso}
E.~Cristiani, B.~Piccoli, and A.~Tosin.
\newblock Modeling self-organization in pedestrians and animal groups from
  macroscopic and microscopic viewpoints.
\newblock In G.~Naldi, L.~Pareschi, and G.~Toscani, editors, {\em Mathematical
  {M}odeling of {C}ollective {B}ehavior in {S}ocio-{E}conomic and {L}ife
  {S}ciences}, pages 337--364. Birkh\"auser, Boston, 2010.

\bibitem{leveque1992nmc}
R.~J. LeVeque.
\newblock {\em Numerical methods for conservation laws}.
\newblock Lectures in Mathematics ETH Z\"urich. Birkh\"auser Verlag, Basel,
  second edition, 1992.

\bibitem{piccoli2009pfb}
B.~Piccoli and A.~Tosin.
\newblock Pedestrian flows in bounded domains with obstacles.
\newblock {\em Contin. Mech. Thermodyn.}, 21(2):85--107, 2009.

\bibitem{piccoli2010tem}
B.~Piccoli and A.~Tosin.
\newblock Time-evolving measures and macroscopic modeling of pedestrian flow.
\newblock {\em Arch. Ration. Mech. Anal.}, 2010.
\newblock Online First (doi: 10.1007/s00205-010-0366-y).

\end{thebibliography}

\end{document}